\numberwithin{equation}{section}
\newtheorem{theorem}[equation]{Theorem}
\newtheorem{lemma}[equation]{Lemma}
\newtheorem{proposition}[equation]{Proposition}
\newtheorem{corollary}[equation]{Corollary}
\theoremstyle{definition}
\newtheorem{notation}[equation]{Notation}
\newtheorem{definition}[equation]{Definition}
\theoremstyle{remark}
\newtheorem{example}[equation]{Example}
\newtheorem{remark}[equation]{Remark}
\newtheorem{assump}[equation]{Assumption}
\DeclareSymbolFont{largesymbols}{OMX}{yhex}{m}{n}
\DeclareMathAccent{\widehat}{\mathord}{largesymbols}{"62}
\newcommand{\RR}
{\mathbf{R}}
\newcommand{\PP}
{\mathbf{P}}
\newcommand{\N}
{\mathbf{N}}
\newcommand{\OO}{\mathscr{O}}
\newcommand{\EE}{\mathbf{E}}
\newcommand{\f}{\bar{f}}
\begin{document}

\title[Gamma Calculus beyond Villani]{Gamma calculus beyond Villani and explicit convergence estimates for Langevin dynamics with singular potentials}

\author[Baudoin]{Fabrice Baudoin{$^{\star}$}}
\thanks{\footnotemark {$\star$} Research was supported in part by the Simons Foundation and NSF grant  DMS-1901315.}
\address{Department of Mathematics\\
University of Connecticut\\
Storrs, CT 06269, U.S.A.} \email{fabrice.baudoin@uconn.edu}

\author[Gordina]{Maria Gordina{$^{\dag }$}}
\thanks{\footnotemark {$\dag$} Research was supported in part by NSF grants DMS-1712427 and  DMS-1954264.}
\address{ Department of Mathematics\\
University of Connecticut\\
Storrs, CT 06269,  U.S.A.}
\email{maria.gordina@uconn.edu}

\author[Herzog]{David P. Herzog{$^{\ddag}$}}
\thanks{\footnotemark {$\ddag$} Research was supported in part by NSF grants DMS-1612898 and DMS-1855504}
\address{Department of Mathematics\\Iowa State University\\Ames, IA 50311, U.S.A.} \email{dherzog@iastate.edu}

\keywords{Langevin dynamics, hypocoercivity,  kinetic Fokker-Planck equation, Lyapunov function}

\subjclass{Primary60J60, 60H10, 35Q84; Secondary  35B40}


\begin{abstract}
We apply Gamma calculus to the hypoelliptic and non-symmetric setting of Langevin dynamics under general conditions on the potential.  This extension allows us to provide explicit estimates on the convergence rate (which is exponential) to equilibrium for the dynamics in a weighted $H^1(\mu)$ sense, $\mu$ denoting the unique invariant probability measure of the system.  The general result holds for singular potentials, such as the well-known Lennard-Jones interaction and confining well, and it is applied in such a case to  estimate the rate of convergence when the number of particles $N$ in the system is large.
\end{abstract}

\maketitle

\tableofcontents

\renewcommand{\contentsname}{Table of Contents}

\maketitle

\section{Introduction}

This paper studies convergence to equilibrium for second-order Langevin dynamics under general growth conditions on the potential. Although we are principally motivated by the case when the potential is singular, e.g. when the dynamics has repulsive forces and/or interactions, the results presented in this paper hold more generally.  In particular, our main result is that, given (very) basic structural and growth conditions on the potential, the dynamics relaxes to equilibrium exponentially fast in an explicitly measurable way.  The ``explicitness" of this result comes directly from the constants appearing in the growth conditions, which can all be readily estimated, and a local Poincar\'{e} constant for the invariant measure $\mu$.  This result is applied to the specific situation of a singular interaction and polynomial confining well to provide explicit estimates on the exponential convergence rate $e^{-\sigma}$ in terms of the number $N\geqslant 1$ of particles in the system.  We will see that $\sigma \geqslant c/(\rho \vee N^p)$, where $\rho>0$ is the local Poincar\'{e} constant for $\mu$ and $c>0, p\geqslant 1$ are constants that are independent of $N$.

Convergence to equilibrium for Langevin dynamics, sometimes called the kinetic Fokker-Planck equation, is a well-studied topic which has been investigated both from analytic and probabilistic perspectives.  The first known result in this circle of problems is due to Tropper in 1977~\cite{Tropper1977a} who proved mixing of the dynamics when the Hessian of $U$, denoted by $\nabla^2 U$, is bounded.  Tropper's result was subsequently improved in the papers of Talay~\cite{Talay2002} and Mattingly, Stuart and Higham~\cite{MattinglyStuartHigham2002}, both in 2002, where exponential convergence to equilibrium was obtained via the existence of a Lyapunov function of the form $H(x,v)+c x\cdot v$ provided the potential is ``polynomial-like".  Here
\begin{align}
H(x,v) = \frac{|v|^2}{2}+ U(x)
\end{align}
is the Hamiltonian of the system and $x$ and $v$ respectively denote the position and velocity vectors.  Additionally under this condition on the potential, in~\cite{Talay2002} Talay established exponential convergence to equilibrium in the topologies $H^k(\mu)$ for all $k\in \N$ and as well as exponential convergence in a weighted topology where the weight satisfies a Lyapunov-type condition.  These papers later inspired the work of Villani in~\cite{Villani2009}, which was applied to prove exponential convergence to equilibrium in $H^1(\mu)$ (and also $L^2(\mu)$) for $C^2$ potentials $U$ such that the measure $e^{-U} \, dx$ satisfies a Poincar\'{e} inequality as well as the growth condition $|\nabla^2 U| \leqslant C(1+ |\nabla U|)$.  This work was in some sense a combination of the ideas in~\cite{Talay2002, MattinglyStuartHigham2002} with the seminal works of H\'{e}rau and Nier~\cite{HerauNier2004} and Talay~\cite{Talay2002} where an appropriately chosen perturbation of the $H^1(\mu)$ norm is constructed in which the dynamics contracts.  The idea being proposed was that local smoothing of the dynamics in the sense of hypoellipticity~\cite{Hormander1967a} determined more global contractive properties like coercivity, hence the nomenclature ``hypocoercivity".  We refer the reader to \cite{DolbeaultMouhotSchmeiser2015, IacobucciOllaStoltz2019} for either different methods for proving, or other applications of hypocoercivity.

Even though Villani's result allows for a general class of potentials, which in particular subsumes the class of potentials treated in~\cite{MattinglyStuartHigham2002, Talay2002}, the growth condition $|\nabla^2 U| \leqslant C(1+ |\nabla U|)$ is not satisfied by potentials with singularities.  This is because the singularity becomes ``stronger" with each additional derivative.  Subsequently, the work of Conrad and Grothaus~\cite{ConradGrothaus2010} and Grothaus and Stilgenbauer~\cite{GrothausStilgenbauer2015} extended Villani's result using the hypocoercive approach to prove ergodicity and establish a rate of convergence (at least as fast as polynomial) for singular potentials satisfying the same Poincar\'{e} condition but the weaker growth condition:
\begin{align*}
\text{For all }\,c>0\,\text{ there exists }\,D(c)>0\,\text{ such that  }|\nabla^2 U| \leqslant c|\nabla U|^2 +D(c).
\end{align*}
Under this same growth condition, convergence to equilibrium at an exponential rate was established in the papers~\cite{CookeHerzogMattinglyMcKinleySchmidler2017, HerzogMattingly2017} by the construction of an explicit Lyapunov function of the form $\exp(\delta H+ \psi)$ where $\delta >0$ is a constant, $H$ is the Hamiltonian and $\psi$ an appropriately constructed perturbation.  The ideas used in these papers utilized large state space asymptotics of the dynamics previously developed in~\cite{AthreyaKolbaMattingly2012, HerzogMattingly2015a, HairerMattingly2009} to build the Lyapunov function.  During a similar time frame, the work of Cattiaux, Guillin, Monmarch\'{e} and Zhang~\cite{CGMZ_2017} proved entropic convergence at a geometric rate employing a weighted log-Sobolev inequality satisfied by potentials under the growth condition
\begin{align}
\label{eqn:CGMZ}
|\nabla^2 U| \leqslant C U^{2\eta} \leqslant C' U^{2\eta +1} \leqslant |\nabla U|^2
\end{align}
outside of a compact domain in space for some constants $C, C', \eta \geqslant 0$.  While this condition does allow for a singular confining potential, as opposed to the growth conditions used in~\cite{ConradGrothaus2010, GrothausStilgenbauer2015, HerzogMattingly2017} it does not afford the flexibility of a difference in asymptotic behavior between the confining well and the interaction potential.  A simple example for which this condition is not satisfied is provided in Section~\ref{sec:weight}.

In terms of explicitly quantifying convergence rates to equilibrium for Langevin dynamics, results for a broader class of potentials than by Villani in~\cite{Villani2009} are much rarer, but significant recent progress has been made by
Eberle, Guillin and Zimmer~\cite{EGZ_17, EberleGuillinZimmer2019} using a direct coupling approach and assuming that the derivatives of $U$ are bounded.  Although seemingly limited by this boundedness assumption, a path is provided by Zimmer~\cite{ZimmerPhDThesis} to extend the results to more general potentials using a Lyapunov condition.  However, the actual dependence of the constants in the Lyapunov condition on the growth conditions satisfied by the potentials remains largely unknown except in the cases where either $U$ satisfies~\eqref{eqn:CGMZ} as in \cite{CGMZ_2017} or behaves  like a quadratic potential as in \cite{EGZ_17}.  Moreover, the relationship between these two commonly employed approaches to tackling the convergence question, analytic versus probabilistic, is not clear in this setting where the generator $L$ is hypoelliptic and non-symmetric with respect to the $L^2(\mu)$ inner product.

In this paper, we provide a different approach to estimating explicit convergence rates within a wide class of potentials for Langevin dynamics which is more in line with the approach of Villani in \cite{Villani2009} and Baudoin in \cite{Baudoin2017a}.  In particular, our main general result directly extends the work of Villani using Gamma calculus.  We then use the results of this approach to estimate how the rate of convergence depends on the growth conditions satisfied by the potential and, in the specific case of a singular interaction and polynomial confining well, we estimate the dependence of the convergence rate on the number $N$ of particles in the system.  An interesting consequence of the framework provided is that the construction of the norm in which the dynamics contracts is very similar in spirit to the typical Harris construction in \cite{HairerMattingly2011b, MeynTweedie1993a}.  That is, to couple two different initial conditions, one must wait until both processes return to the ``center" of space.  Then, once both processes enter this domain, the noise provides the mechanism for mixing.  Here we will build the norm by making use of Lyapunov structure of a slightly different variety outside of the center and a local Poincar\'{e} inequality satisfied by the invariant measure $\mu$ in the center.  It should be noted that this approach is different than the one outlined by Monmarch\'{e} in \cite{Monmarche2015, Monmarche2019} in that we do not modify the natural carr\'{e} du champ operator associated to the generator $L$ of the diffusion.

The organization of this paper is as follows.  In Section~\ref{sec:statmainres}, we set up notation, introduce terminology and state the main general result.  There we also present concrete examples of potentials and estimates on the convergence rates. In Section~\ref{sec:weightone} and Section~\ref{sec:weight}, we gradually construct the (weighted) $H^1$ metric in which the dynamics contracts, explaining why the presence of the weight is natural and also explaining how the Villani condition on $U$ arises in the process.  The subsequent section, Section~\ref{sec:LFs}, shows that the Lyapunov structure on which the results of Section~\ref{sec:weightone} relies is not hard to verify and estimate quantitatively.

\section{Mathematical setting, main results and examples}
\label{sec:statmainres}

In this section, we fix notation, terminology and our assumptions.  Then we state the main results to be proved in this paper.  Following the statements of these results, we end the section with concrete examples of potentials and some explicit convergence rates.

\subsection{Setting and basic assumptions}  Throughout the paper we study the following Langevin stochastic differential equation
\begin{align}
\label{eqn:main}
dx(t)&= v(t) \, dt \\
dv(t)&= -\gamma v(t) \, dt - \nabla U(x(t)) \, dt + \sqrt{2\gamma T} \, dB(t), \notag
\end{align}
where
\begin{align*}
& x(t)=(x_1(t), x_2(t), \ldots, x_N(t))\in (\RR^k)^N,
\\
& v(t)=(v_1(t), v_2(t), \ldots, v_N(t)) \in (\RR^k)^N
\end{align*}
denote the position and velocity vectors, respectively, of $N$-particles evolving on some subset of $\RR^k$.  The parameters $\gamma, T>0$ are the friction and temperature constants, respectively, while $B(t)$ is a standard $(Nk)$-dimensional Brownian motion defined on a probability space $(\Omega, \mathcal{F}, \PP)$.  The function $U:(\RR^k)^N\rightarrow [0,  \infty]$ is the potential function. It encodes both environmental forces acting on all particles as well as particle interactions.  We will allow $U$ to take the value $\infty$ if the potential function has singularities, as in the case when $U$ has a Lennard-Jones interaction.  More specifically, $U$ takes the value $\infty$ at the point(s) in $(\RR^k)^N$ of singularity.  Thus if $U$ is nonsingular, as in the case of a standard polynomial interaction and confining well, then $U$ never takes this value.

\begin{notation} We denote by $\OO$ the  subset of $(\RR^k)^N$ on which the position vector $x(t)$ lives, namely,
\begin{align}
\OO := \{ x\in (\RR^k)^N \, : \, U(x) < \infty\}.
\end{align}
By $\mathcal{X}$ we denote the state space
\begin{align}
\mathcal{X}:= \OO \times (\RR^k)^N
\end{align}
and let
\begin{align}
d:=Nk
\end{align}
denote the spatial dimension parameter.

\end{notation}

\begin{assump}[Basic Structure]
\label{assump:1}
~
\begin{itemize}
\item $U\in C^\infty(\OO; [0, \infty))$.
\item The set $\OO$ is connected.  Moreover, for every $n\in \N$ the open set
\begin{align*}
\OO_n = \{ x\in (\RR^k)^N \, : \, U(x) < n \}
\end{align*}
has compact closure.
\item The integral $\int_\OO e^{-\frac{1}{T} U} \, dx $ is finite.
 \end{itemize}
\end{assump}
\begin{assump}[Growth Condition]\label{assump:2}
There exists a constant $\kappa^{\prime \prime} >0$ such that
\begin{align}
\label{eqn:gbound1}
|\nabla^2 U(x) y| \leqslant \frac{1}{16Td} |\nabla U(x) |^2|y| + \kappa^{\prime \prime}|y|
\end{align}
for all $x\in \mathcal{O}, y \in (\RR^k)^N$.  Here $\nabla^2 U$ denotes the Hessian matrix of $U$. Furthermore, there exist constants $c_{0}, c_\infty, d_0, d_\infty >0$ and $\eta_0 \in( -\infty, -1) \cup (0, \infty)$, $\eta_\infty>1$ such that
\begin{align}
\label{eqn:gbound2}
c_\infty U^{2-\frac{2}{\eta_\infty}} - d_\infty \leqslant |\nabla U|^2 \leqslant c_0 U^{2+\frac{2}{\eta_0}} + d_0.
\end{align}
\end{assump}

\subsection{Explanation of the assumptions}  We start by observing that the first assumption, Assumption~\ref{assump:1}, is a nominal requirement to ensure that Equation \eqref{eqn:main} has unique pathwise solutions.  Indeed, using the standard fixed-point argument, it is not hard to show that Assumption~\ref{assump:1} implies that Equation~\eqref{eqn:main} has unique local pathwise solutions $(x(t), v(t))$ evolving on the state space $\mathcal{X}$.

More precisely, denote by $B_n$, $n\in \N$,   the open ball of radius $n$ in $(\RR^k)^N$ centered at the origin and define stopping times $\tau_n$ and $\tau$ by
\begin{align*}
& \tau_n = \inf \{ t\geqslant 0: (x(t), v(t)) \notin \OO_n \times B_n\},
\\
& \tau= \lim_{n\rightarrow \infty} \tau_n.
\end{align*}
Then pathwise solutions exist and are unique for all times $t< \tau$~ (e.g. \cite{KhasminskiiBook2012, Rey-Bellet2006}).  However, by using the Hamiltonian
\begin{align}
H(x,v) := \frac{|v|^2}{2}+ U(x)
\end{align}
as a basic type of Lyapunov function,  it can be shown that
\begin{align}
\PP_{(x,v)}\{ \tau < \infty \} =0 \,\,\text{ for all } \,\, (x,v) \in \mathcal{X}.
\end{align}
Hence, Equation~\eqref{eqn:main} has unique pathwise solutions for all finite times $t\geqslant 0$ almost surely evolving on the state space $\mathcal{X}$.  Moreover, the pathwise solutions given by Assumption~\ref{assump:1} are Markovian, and we denote by $\left\{ (P_t) \right\}_{t\geqslant 0}$ the associated Markov semigroup acting on the space $C_b^2(\mathcal{X}; \RR)$ of bounded $C^2$ real-valued functions on $\mathcal{X}$.  Note that the semigroup $\left\{ P_t \right\}_{t\geqslant 0}$ has its infinitesimal generator $L$ given by
\begin{align}
L = v \cdot \nabla_x - \gamma v \cdot \nabla_v - \nabla U \cdot \nabla_v + \gamma T \Delta_v.
\end{align}

Another important consequence of Assumption~\ref{assump:1} is that it implies that the canonical Boltzmann-Gibbs measure $\mu$ given by
\begin{align}
\label{eqn:Gibbs}
& \mu(dx \,dv) = \frac{1}{\mathcal{N}}  e^{-\frac{1}{T} H(x,v)} \, dx \, dv,
\\
& \mathcal{N}= \int_\mathcal{X} e^{-\frac{1}{T}H(x,v)} \, dx \, dv, \notag
\end{align}
is an invariant probability measure under the dual action of the Markov semigroup $P_{t}$; that is, $\mu(\mathcal{X})=1$ and for all Borel sets $A$ in $\mathcal{X}$  and all $t>0$ we have
\begin{align*}
\mu P_t(A):= \int_\mathcal{X} \mu(dy) P_t(y, A) = \mu(A),
\end{align*}
where $P_t(y, \, \cdot \,)$ denotes the Markov transition kernel, whose existence is also guaranteed by Assumption~\ref{assump:1}.
\begin{remark}
It is possible to replace Assumption~\ref{assump:1} by a weaker regularity hypothesis on the potential $U$.  With only minor adjustments, all of the general results stated here hold provided $U\in C^2(\mathcal{O})$, but we maintain the assumption that $U\in C^\infty(\mathcal{O})$ to keep the presentation as simple as possible and because all of the potentials we are interested in satisfy this assumption.  In certain scenarios, one can treat potentials less regular than $C^2(\mathcal{O})$.  See, for example, the papers~\cite{ConradGrothaus2010, GrothausStilgenbauer2015}. However, in such cases, greater care must be taken to make sense of solutions of~\eqref{eqn:main}.
\end{remark}

\begin{remark}
There are natural cases where the set $\mathcal{O}$ is not connected.  For example, if $N=k=1$ and $U: \RR\rightarrow (0, \infty]$ is defined by
\begin{align*}
U(x) = \begin{cases}
|x|^a + \frac{1}{|x|^b}& \text{ if } x\neq 0
\\
\infty & \text{ if } x=0
\end{cases}
\end{align*}
for some constants $a>1$ and $b>0$, then the set $\mathcal{O}$ has two disjoint connected components, $\RR_{>0}$ and $\RR_{<0}$.  In this case, the potential $U$ models a ``hard wall" at $0$ that the single particle cannot penetrate.  As a result, solutions started on either side of $0$ generate mutually exclusive invariant measures.  This is not a weakness in Assumption~\ref{assump:1}, for it allows one to work within each connected component of $\mathcal{O}$ by setting either $U(x)= \infty$ when $x\leqslant 0$ or $U(x)=\infty$ when $x\geqslant 0$.
\end{remark}

The second assumption, Assumption~\ref{assump:2}, is a basic growth condition on the potential.  It is satisfied by a wide class of potentials, including those which contain Lennard-Jones interactions, for general temperatures $T>0$ and general dimensions $d=Nk$, and logarithmic singular interactions, provided the temperature $T>0$ is sufficiently small depending on the dimension.
\begin{remark}\label{r.2.3}
One should think of threshold of $1/(16Td)$ in Assumption~\ref{assump:2} as a quantitative version of the hypothesis made in \cite{HerzogMattingly2017}, where it is assumed that for any sequence $x_n \in \mathcal{O}$ with $U(x_n)\xrightarrow[n \to \infty]{} \infty$ we have
\begin{align*}
\frac{|\nabla^2 U(x_n)|}{|\nabla U(x_n)|^2} \xrightarrow[n \to \infty]{}  0.
\end{align*}
Thus under such an assumption, the bound~\eqref{eqn:gbound1} is always satisfied for some $\kappa''>0$.
\end{remark}

\begin{remark}
With the appropriate Lyapunov function, it is possible to improve the threshold $1/(16Td)$ in some cases.  See, for example, the recent work~\cite{lu_19} where Coloumb interaction potentials are treated in detail.  However, when $|\nabla^2 U|$ is of the same order as $|\nabla U|^2$ asymptotically when $U\rightarrow \infty$, then $U$ behaves like a logarithmic function.  Therefore in such cases, our assumptions are near the boundary where integrability of the Gibbs density fails and thus major structural changes are taking place in the measure.
\end{remark}
%
%
%

%

\subsection{The main results}
Before stating the main results, we need the following basic notation.
\begin{notation}
For a parameter $\zeta >0$, we define the modified gradient operator $\nabla_\zeta$ by
\begin{align}
\label{eqn:modgrad}
\nabla_\zeta :=\zeta^{-1} ( \nabla_v,  \nabla_x-c(\gamma) \nabla_v),
\end{align}
where
\begin{align}
\label{def:cgamma}
c(\gamma):= \frac{\gamma}{2}+ \sqrt{\frac{\gamma^2}{2}+1}.
\end{align}
Also for any $W\in L^1(\mu)$, we let $\mu_W$ denote the following (finite) weighted measure on Borel subsets of $\mathcal{X}$
\begin{align}
\label{eqn:wborel}
\mu_W(d x \, dv) := W(x,v) e^{-\frac{1}{T}H(x,v)} \, dx \, dv.
\end{align}
We denote  by $H^1_{\zeta,W}$ the space of weakly differentiable functions $f: \mathcal{X}\rightarrow \RR$ with
\begin{align}
\|f\|^2_{ \zeta, W}:=  \int_\mathcal{X}  f^2 \, d\mu_W   + | \nabla_\zeta f|^2 \, d\mu < \infty.
\end{align}
\end{notation}
As we will now see, for the appropriate choice of $W\in L^1(\mu)$ and $\zeta >0$, the distance $\| \cdot \|_{\zeta, W}$ is the type of norm in which the semigroup $P_t$ is contractive for all $t>0$.

\begin{theorem}
\label{thm:mainconv}
Suppose that the potential $U$ satisfies Assumption~\ref{assump:1} and Assumption~\ref{assump:2}.  Then there exists an explicit function $W\in C^\infty(\mathcal{X}; [1, \infty))\cap L^1(\mu)$ and explicit constants $\sigma, \zeta >0$ such that for all $f \in H^1_{\zeta, W}$ satisfying $\int_\mathcal{X} f \, d\mu =0$ we have
\begin{align}
\label{eqn:mainconvest}
\|P_t f\|_{\zeta, W}^2 \leqslant e^{-\sigma t} \| f\|_{\zeta, W}^2
\end{align}
 for all $t\geqslant 0$.
 \end{theorem}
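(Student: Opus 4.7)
Plan: I establish the contraction by showing the differential inequality $\Phi'(t)\leqslant-\sigma\,\Phi(t)$ for $\Phi(t):=\|P_tf\|_{\zeta,W}^2$ and applying Gronwall. Writing $L=S+A$ with symmetric part $S=\gamma T\Delta_v-\gamma v\cdot\nabla_v$ in $L^2(\mu)$ and antisymmetric Hamiltonian part $A=v\cdot\nabla_x-\nabla U\cdot\nabla_v$, so that $L^*_\mu=S-A$, a short integration by parts gives
$$\frac{d}{dt}\!\int\!(P_tf)^2\,d\mu_W=\int(P_tf)^2\,L^*_\mu W\,d\mu-2\gamma T\!\!\int\! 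W|\nabla_v P_tf|^2\,d\mu.$$
The first ingredient is therefore a weight $W\in C^\infty(\mathcal X;[1,\infty))\cap L^1(\mu)$ satisfying a Lyapunov-type bound $L^*_\mu W\leqslant-\alpha W+\beta\mathbf 1_K$ on some large compact $K\subset\mathcal X$ and, in addition, growing like $|\nabla U|^2$ at infinity so it can absorb the Hessian remainders coming from the gradient piece. Constructing such a $W$ from Assumption~\ref{assump:2} is the job of Section~\ref{sec:LFs}.

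For the gradient piece I compute, directly from the definition of $L$, the commutator identities
$$[L,\nabla_v]=-\nabla_x+\gamma\nabla_v,\qquad[L,\nabla_x]=\nabla^2U\,\nabla_v,$$
so that $[L,\nabla_x-c(\gamma)\nabla_v]=\nabla^2U\,\nabla_v+c(\gamma)\nabla_x-c(\gamma)\gamma\nabla_v$. Substituting these in $\frac{d}{dt}\!\int|\nabla_\zeta P_tf|^2d\mu=2\!\int\!\nabla_\zeta P_tf\cdot\nabla_\zeta(LP_tf)\,d\mu$ and integrating the $S$-piece by parts against $\mu$ produces the good dissipation $-2\gamma T\zeta^{-2}(\|\nabla_v^2P_tf\|_{L^2(\mu)}^2+\|\nabla_v(\nabla_x-c(\gamma)\nabla_v)P_tf\|_{L^2(\mu)}^2)$, a scalar quadratic form in $(\nabla_vP_tf,(\nabla_x-c(\gamma)\nabla_v)P_tf)$ with coefficients depending on $c(\gamma)$ and $\gamma$, and a Hessian cross term $-2\zeta^{-2}\!\int\!\langle\nabla^2U\,\nabla_vP_tf,(\nabla_x-c(\gamma)\nabla_v)P_tf\rangle\,d\mu$. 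The specific value $c(\gamma)=\gamma/2+\sqrt{\gamma^2/2+1}$ is the twist chosen so that, after using the good $v$-dissipation to feed back via the Gaussian Poincar\'{e} inequality in the velocity marginal, the scalar quadratic form becomes coercive with an explicit gap; this is the Villani mechanism by which the velocity noise dissipates the missing $\nabla_x$-direction. The Hessian cross term is controlled by Assumption~\ref{assump:2}: the pointwise bound $|\nabla^2U\,y|\leqslant\frac{1}{16Td}|\nabla U|^2|y|+\kappa''|y|$ combined with Cauchy-Schwarz dominates it by a small multiple of $\zeta^{-2}\!\int\!|\nabla U|^2|\nabla_vP_tf|^2d\mu$ plus harmless $\|\nabla_\zeta P_tf\|^2$ pieces, and the coefficient $1/(16Td)$ is calibrated so that the good dissipation $-2\gamma T\!\int\!W|\nabla_vP_tf|^2d\mu$ from the function piece absorbs it whenever $W\gtrsim|\nabla U|^2$. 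This is precisely why the weighted measure $\mu_W$ must be introduced—in Villani's unweighted setting such absorption is only possible under the much stronger $|\nabla^2U|\leqslant C(1+|\nabla U|)$.

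Combining the two pieces yields
$$\Phi'(t)\leqslant-\alpha\!\int\!(P_tf)^2W\,d\mu+\beta\!\int_K(P_tf)^2\,d\mu-\lambda\!\int|\nabla_\zeta P_tf|^2\,d\mu$$
for explicit $\alpha,\lambda>0$, and it remains to absorb the middle local term. Since $e^{-H/T}$ is smooth and strictly positive on $\overline K$, $\mu$ admits a local Poincar\'{e} inequality on $K$ with some constant $\rho$; applied to $P_tf$, together with the invariance $\int P_tf\,d\mu=0$ and a tail bound provided by $W\geqslant1$ growing to infinity, one obtains $\int_K(P_tf)^2d\mu\leqslant\rho\|\nabla P_tf\|_{L^2(\mu)}^2+\varepsilon\int(P_tf)^2W\,d\mu$ with $\varepsilon$ as small as desired by enlarging $K$. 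Using $|\nabla f|^2\leqslant C\zeta^2|\nabla_\zeta f|^2$ and choosing $\zeta$ sufficiently small closes the estimate into $\Phi'\leqslant-\sigma\Phi$, and Gronwall gives \eqref{eqn:mainconvest}. The main obstacle is the combined bookkeeping of the previous two steps together with the simultaneous construction of $W$: one must ensure the Lyapunov bound $L^*_\mu W\leqslant-\alpha W+\beta\mathbf 1_K$, the lower growth $W\gtrsim|\nabla U|^2$, and $W\in L^1(\mu)$, using only Assumption~\ref{assump:2}, and the quantitative construction of such a $W$ in Section~\ref{sec:LFs} is what makes all the constants in $\sigma$ genuinely explicit.
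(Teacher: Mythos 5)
Your overall route is the same as the paper's: differentiate $\|P_t\bar f\|_{\zeta,W}^2$, impose a Lyapunov bound $L^*W\leqslant-\alpha W+\beta\mathbf 1_K$ for the $L^2(\mu)$-adjoint $L^*$, run the twisted commutator (Gamma-two) computation for the gradient part, and close with a local Poincar\'e inequality on $K$ plus a tail estimate. The genuine gap is in your absorption of the Hessian term. You claim that Cauchy--Schwarz together with $|\nabla^2U\,y|\leqslant\tfrac{1}{16Td}|\nabla U|^2|y|+\kappa''|y|$ dominates the cross term by a small multiple of $\int|\nabla U|^2|\nabla_vP_tf|^2\,d\mu$ plus ``harmless $\|\nabla_\zeta P_tf\|^2$ pieces,'' so that a weight with $W\gtrsim|\nabla U|^2$ suffices. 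This does not close: the only dissipation available against the $Z$-component $(\nabla_x-c(\gamma)\nabla_v)P_tf$ has a \emph{constant} coefficient (the $\tfrac{\gamma}{2}\Gamma^Z$ term produced by the twist), while the weighted dissipation $\gamma T\int\Gamma^Y\,d\mu_W$ acts only on $v$-derivatives, because in this norm the weight multiplies the zeroth-order term and not the gradient. Hence Young's inequality applied to $\kappa'|\nabla U|^2|Yg||Zg|$ must put the entire unbounded factor on the $Y$-side, giving $\tfrac{\gamma}{4}|Zg|^2+C\gamma^{-1}|\nabla U|^4|Yg|^2$; splitting the other way leaves $|\nabla U|^2|Zg|^2$, which is not a harmless multiple of $|\nabla_\zeta g|^2$. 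So the weight must dominate a constant times $|\nabla U|^4$, which is exactly the paper's condition $W|y|^2\geqslant(\beta\rho'+1)\big(\mathcal{R}(x,y)/(\gamma T)+|y|^2/(2T)\big)$ with $\mathcal{R}$ containing $|\nabla^2U\,y|^2$, leading via Assumption~\ref{assump:2} to the function $D(U)\sim U^{4+4/\eta_0}$. Your plan survives only because the weight actually built in Section~\ref{sec:LFs} is exponential in $H$ and dominates any such polynomial, but the requirement you state for $W$ is too weak and, as written, the absorption step fails.

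Two further points you gloss over, both of which carry real content in the paper. First, positivity and smoothness of the density on $\overline K$ gives a local Poincar\'e inequality only if $K$ is \emph{connected}; this is not automatic for sublevel sets of $U$ and is precisely what the lower bound in \eqref{eqn:gbound2} together with the mountain-pass argument of Remark~\ref{rem:connected} provides. Second, ``$\varepsilon$ as small as desired by enlarging $K$'' must be made quantitative, since explicitness is the point of the theorem: one needs the bound on $\mu(K^c)/\mu(K)$ from Proposition~\ref{prop:quantmu} and the requirement $\tilde V\geqslant 2\beta\mu(K^c)/(\alpha\mu(K))$, and enlarging $K$ simultaneously changes $\rho$, $\beta$, $\lambda$, $\zeta$ and $\sigma$, so the constants must be tracked as in the proof of Corollary~\ref{cor:mainexpl}. (Minor: the twist parameter should solve $c^2-\gamma c-1=0$, i.e.\ $c=\gamma/2+\sqrt{\gamma^2/4+1}$.)
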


\begin{remark}
Below we give an expression for the function $W$ above as well as estimates on the parameters $\sigma, \zeta>0$, thus making the bound~\eqref{eqn:mainconvest} explicit.  It should be noted that we did not assume that $\mu$ satisfies the Poincar\'{e} inequality on $\mathcal{X}$, although it is implicitly implied by the first condition in Assumption~\ref{assump:2}.  See, for example, the work of Villani~\cite{Villani2009}.  Because this bound is ``near" the boundary where the Poincar\'{e} inequality is satisfied, this result is ``closer" to establishing the equivalence for Langevin dynamics between exponential convergence to equilibrium and $\mu$ satisfying the Poincar\'{e} inequality on $\mathcal{X}$.
\end{remark}

To introduce a valid choice of $W$ and estimate the constants $\sigma, \zeta>0$ above, we need another definition.
\begin{definition}
\label{def:LPI}
Let $A\subseteq \mathcal{X}$ be a Borel set.  If $\nu$ is a positive Borel measure on $\mathcal{X}$ and $\nu(A)\in (0, \infty)$, we say that $\nu$ satisfies the \emph{local Poincar\'{e} inequality on} $A$ if there exists a constant $\rho=\rho(\nu, A)>0$ such that
\begin{align*}
\int_A f^2 \, d\nu \leqslant \rho \int_A |\nabla f|^2 \, d\nu  + \frac{1}{\nu(A)}\Big( \int_A  f \, d\nu \Big)^2
\end{align*}
for all $f\in H^1(\nu)$.
\end{definition}

\begin{remark}
\label{rem:connected}
Since the Boltzmann-Gibbs density $\mathcal{N}^{-1}e^{-\frac{1}{T} H}$ is strictly positive on $\mathcal{X}$, the measure $\mu$ defined by \eqref{eqn:Gibbs} satisfies the local Poincar\'{e} inequality on any compact and connected set $J\subseteq \mathcal{O}$.  Employing Assumption~\ref{assump:1} and Assumption~\ref{assump:2}, it follows that any set of the form
\begin{align*}
\{ (x,v) \in \mathcal{X} \, : \, |v|^2 \leq A, \, U(x) \leq B \}, \qquad A \geq 0, \,\,\, B >\big(\tfrac{d_\infty}{c_\infty}\big)^{\frac{1}{2-2/\eta_\infty}}\end{align*}
is both compact and connected.  Indeed, compactness is immediate.  Moreover, connectedness follows by the Mountain Pass Theorem~\cite{Kat_94}, a generalization of Rolle's theorem, since $\mathcal{O}$ is connected by Assumption~\ref{assump:1}, and since 
\begin{align*}
|\nabla U(x)| >0 \,\,\,\text{ whenever } \,\, \,U(x) >\big(\tfrac{d_\infty}{c_\infty}\big)^{\frac{1}{2-2/\eta_\infty}}\end{align*}
by relation~\eqref{eqn:gbound2}.  
\end{remark}

Let $U$ satisfy Assumption~\ref{assump:1} and Assumption~\ref{assump:2}.  We introduce constants
\begin{align}
& R_1 :=  \left\{ \frac{d_\infty}{c_\infty} +
\frac{(40e^4+4) Td (\kappa''+1) \vee 92 \gamma^2 Td) }{c_\infty} \right\}^{\frac{1}{2-\frac{2}{\eta_\infty}}}, \label{eqn:constr1r2}
\\
& R_2 :=R_1+ 32Td, \notag
\end{align}
where we recall that $c_0, d_0, c_\infty, d_\infty, \kappa''>0$ and $\eta_0 , \eta_\infty$ are the constants in Assumption~\ref{assump:2}, and $T>0, d=Nk$ are the temperature and dimensionality constants, respectively.
Define $\kappa'=1/(16Td)$ and for $r\geqslant 0$ set
\begin{align}
D(r)= \frac{2(c_0 \kappa')^2 r^{4+ \frac{4}{\eta_0}} + 2(d_0 \kappa')^2 + (\kappa'')^2}{\gamma^2 T}+ \frac{1}{2T}.
\end{align}
Introduce the compact and connected subset $K\subseteq\mathcal{X}$ (cf. Remark~\ref{rem:connected} and relation \eqref{eqn:constr1r2}) given by
\begin{align}
\label{eqn:Kcomp}
K = \{(x,v) \in \mathcal{X}\,: \,  |v|^2 \leqslant ( 20 e^4+2)Td \} \cap \{ (x,v) \in \mathcal{X} \, : \, U \leqslant R_2\} .
\end{align}
Since $\mu$ satisfies the local Poincar\'{e} inequality on $K$, throughout we let $\rho_K>0$ denote the associated Poincar\'{e} constant as in Definition~\ref{def:LPI}.  Set
\begin{align*}
\rho'_K = \frac{(4 c^2(\gamma) +4) \rho_K}{\gamma}.
\end{align*}
Next, let $\alpha =\frac{\gamma  Td }{4R_2}$, $\beta = \frac{5\gamma Td}{4 R_2} e^{4}$ and $\lambda_0, \lambda >0$ be such that
\begin{align*}
\lambda_0 \geqslant R_2 \log (D(\lambda_0)) + R_2 \log( \beta \rho_K' +1)
\end{align*}
and
\begin{align}
\label{eqn:lambda}
\lambda\geqslant  (\beta \rho'_K +1) D(\lambda_0) . \end{align}
Define
\begin{align}
\label{eqn:zetasigma}
\zeta^2 = \frac{2}{1+ \beta \rho_K'} \qquad \text{ and } \qquad \sigma = \frac{\alpha}{2(1+\lambda)} \wedge \frac{\gamma}{1+ \beta \rho_K' }.
\end{align}
Next, let $h\in C^\infty([0,\infty); [0,1])$ be any function satisfying
\begin{align*}
h(q)= \begin{cases}
1 & \text{ if } q\geqslant R_2 \\
0 & \text{ if } q\leqslant R_1
\end{cases}
\,\,\, \text{ and } \,\,\,
|h'| \leqslant \frac{2}{R_2 - R_1}= \frac{1}{16 T d}.
\end{align*}
Define
\begin{align*}
\psi(x,v) =\begin{cases}
 \displaystyle{-\frac{3\gamma Td }{2R_2}    \, h(U(x))    \frac{v\cdot \nabla U(x)}{|\nabla U(x)|^2} }& \text{ if } U(x) \geqslant R_1 \\
 0 & \text{ otherwise}
 \end{cases}
\end{align*}
and
\begin{align}
\label{eqn:Vlyap}
V(x,v) = \exp\bigg(\frac{H(x,v)}{R_2} + \psi(x,v)\bigg).
\end{align}

\begin{corollary}
\label{cor:mainexpl}
Suppose that the potential $U$ satisfies Assumption~\ref{assump:1} and Assumption~\ref{assump:2}. Then in the statement of Theorem~\ref{thm:mainconv} we may choose $\sigma, \zeta >0$ as in~\eqref{eqn:zetasigma} and the weight $W$ to be $W= e^{1} V + \lambda$ where $V$, $\lambda\geqslant 1$, and $K$ are as in~\eqref{eqn:Vlyap}, \eqref{eqn:lambda} and~\eqref{eqn:Kcomp}, respectively.
\end{corollary}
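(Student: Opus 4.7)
The corollary is an explicit unpacking of Theorem~\ref{thm:mainconv}, so the plan is to verify that the particular weight $W = e\,V + \lambda$ together with the stated values of $\sigma$ and $\zeta$ satisfies whatever Lyapunov-type hypotheses are used (implicitly) in the proof of Theorem~\ref{thm:mainconv}. Based on the framework outlined in the introduction (the combination of an exterior Lyapunov condition of the form $LV \leqslant -\alpha V$ off a compact centre together with a local Poincar\'{e} inequality on the centre), the constants $\alpha, \beta, \rho_K'$ appearing in \eqref{eqn:zetasigma} suggest that what is needed is a Foster--Lyapunov drift inequality of the shape
\begin{align*}
LV \leqslant -\alpha V + \beta \mathbbm{1}_K,
\end{align*}
together with integrability $V \in L^1(\mu)$, with $K$ as in \eqref{eqn:Kcomp}.

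The first step is integrability. Since $|\psi(x,v)| \lesssim |v|/|\nabla U(x)|$ on $\{U\geqslant R_1\}$ and Assumption~\ref{assump:2} gives $|\nabla U|^2 \geqslant c_\infty U^{2-2/\eta_\infty} - d_\infty$, the correction $\psi$ is bounded by a function of polynomial growth that is dominated by $H/R_2$ at infinity; thus $V \asymp e^{H/R_2}$ up to controllable factors. Because $R_2 \geqslant 32Td > T$, the function $V e^{-H/T}$ is integrable and so $W \in L^1(\mu)$. This also gives the lower bound $W \geqslant 1$ via the constant offset $\lambda \geqslant 1$.

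The main step is the drift computation $LV$. Using $L = v\cdot\nabla_x - \gamma v\cdot\nabla_v - \nabla U \cdot \nabla_v + \gamma T \Delta_v$, I would first compute $L(H/R_2 + \psi)$ and $|\nabla_v (H/R_2+\psi)|^2$ in order to express $LV/V$. The Hamiltonian part produces $L(H)/R_2 = (-\gamma |v|^2 + \gamma T d)/R_2$, while the cross-term $\psi$ (which is the kinetic Fokker-Planck analogue of the Talay / Mattingly-Stuart-Higham / H\'erau-Nier correction $x\cdot v$ adapted to singular potentials) contributes, in the region $\{U \geqslant R_2\}$ where $h=1$,
\begin{align*}
L\!\left( -\frac{3\gamma Td}{2R_2}\frac{v\cdot \nabla U}{|\nabla U|^2}\right) = -\frac{3\gamma Td}{2R_2}\!\left( \frac{|\nabla U|^2}{|\nabla U|^2} + \text{terms controlled by } \frac{v\otimes v : \nabla^2 U}{|\nabla U|^2}\right),
\end{align*}
whose leading contribution is a position-dissipation term $-\tfrac{3\gamma Td}{2R_2}$. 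Here Assumption~\ref{assump:2} is used critically: the bound $|\nabla^2 U\, v| \leqslant \tfrac{1}{16Td}|\nabla U|^2|v| + \kappa''|v|$ controls the $\psi$-generated error terms by a small multiple of the good position gradient, exactly so that the coefficient $\tfrac{3\gamma Td}{2R_2}$ is not destroyed by the error. Combining these with the quadratic term $|\nabla_v(H/R_2+\psi)|^2$ arising from $LV = V\{L(H/R_2+\psi) + \gamma T |\nabla_v(H/R_2+\psi)|^2\}$, the definitions of $R_1, R_2$ (with the explicit thresholds $(40e^4+4)Td(\kappa''+1) \vee 92\gamma^2 Td$ and the $32Td$ gap) are designed so that outside $K$ the dissipation beats all the errors with room to spare, yielding $LV \leqslant -\alpha V$ with $\alpha = \gamma T d /(4 R_2)$; inside $K$ the same computation yields $LV \leqslant \beta V \leqslant \beta$-up-to-constants, giving the $\beta = 5 \gamma T d e^4/(4R_2)$. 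The transitional region $\{R_1 \leqslant U \leqslant R_2\}$ is handled using the bound $|h'|\leqslant 1/(16Td)$ and the same error estimates.

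Given this drift inequality and the local Poincar\'{e} inequality on $K$ with constant $\rho_K$, the final step is purely algebraic: substitute $\alpha, \beta, \rho_K'$ into the formulas from the proof of Theorem~\ref{thm:mainconv} to obtain $\zeta^2 = 2/(1+\beta\rho_K')$ and $\sigma = \alpha/(2(1+\lambda)) \wedge \gamma/(1+\beta\rho_K')$. The condition~\eqref{eqn:lambda} on $\lambda$ is the standard requirement that $\lambda$ dominate the constant $D(\lambda_0)$ controlling $|\nabla_\zeta V|^2 / V$, which enters through $\Gamma$-calculus estimates on the weighted norm; the form of $D(r)$ is precisely the one obtained when one estimates $|\nabla V|^2 / V^2$ using the upper bound $|\nabla U|^2 \leqslant c_0 U^{2+2/\eta_0} + d_0$. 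The main obstacle is the bookkeeping in the $LV$ computation in the transitional annulus $\{R_1 \leqslant U \leqslant R_2\}$, where none of the simplifications $h=0$ or $h=1$ apply, and where every constant must be tracked to confirm the specific thresholds in \eqref{eqn:constr1r2}.
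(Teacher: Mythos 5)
Your outline identifies the right general shape (Lyapunov drift off a compact centre plus a local Poincar\'{e} inequality on it, then algebra for $\sigma,\zeta$), but the drift condition you posit is with respect to the wrong operator, and for the paper's $V$ it is in fact false. The machinery behind Theorem~\ref{thm:mainconv} (Theorem~\ref{thm:poincarelyap}) requires a Lyapunov function for the $L^2(\mu)$-adjoint $L^*$ of \eqref{def:Lstar}, not for $L$: when one differentiates the weighted term $\int_\mathcal{X}(P_t\f)^2\,d\mu_W$, the problematic contribution is $\int_\mathcal{X} L(P_t\f)^2\,W\,d\mu=\int_\mathcal{X}(P_t\f)^2\,L^*W\,d\mu$, so the drift must control $L^*W$. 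This is not cosmetic. The minus sign in $\psi$ in \eqref{eqn:Vlyap} is tuned to $L^*$: there the term $+\nabla U\cdot\nabla_v\psi=-\tfrac{3\gamma Td}{2R_2}h(U)$ supplies the dissipation in the region where $U$ is large and $|v|$ is small, whereas under $L$ the corresponding term is $-\nabla U\cdot\nabla_v\psi=+\tfrac{3\gamma Td}{2R_2}h(U)$, so for this $V$ one has, for small $|v|$ and large $U$,
\begin{align*}
\frac{LV}{V}\;\approx\;\frac{\gamma Td}{R_2}+\frac{3\gamma Td}{2R_2}+(\text{small})\;>\;0 ,
\end{align*}
and $LV\leqslant-\alpha V+\beta\mathbf{1}_K$ fails outside $K$. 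Your claim that the leading contribution of the correction under $L$ is $-\tfrac{3\gamma Td}{2R_2}$ is precisely the $L^*$ computation of Theorem~\ref{thm:LFquant}, mislabeled as an $L$ computation; a drift for $L$ would in any case not feed into the proof of Theorem~\ref{thm:mainconv}, since the adjoint is what appears after integrating against $W\,d\mu$.

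A second, smaller gap: the hypotheses actually invoked are not just the drift, integrability and the local Poincar\'{e} inequality. Theorem~\ref{thm:poincarelyap} also requires the two pointwise conditions \eqref{eqn:Wchoice} and \eqref{eqn:Vchoice}. Your discussion of $\lambda\geqslant(\beta\rho_K'+1)D(\lambda_0)$ is essentially the verification of \eqref{eqn:Wchoice}, except that the quantity the weight must dominate is the Hessian remainder $\mathcal{R}(x,y)$ of \eqref{eqn:Rdef} coming from the $\Gamma_2$ lower bound \eqref{eqn:gamma3eq} (bounded via $|\nabla^2U\,y|\leqslant\kappa'|\nabla U|^2|y|+\kappa''|y|$ and \eqref{eqn:gbound2}, which is where $D(U(x))$ comes from), not $|\nabla_\zeta V|^2/V$. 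But the condition \eqref{eqn:Vchoice}, i.e.\ $\tilde V\geqslant 2\beta\mu(K^c)/(\alpha\mu(K))$, is nowhere addressed in your proposal; in the paper it is verified through the quantitative estimate $\mu(K^c)\leqslant 1/(10e^4+1)$ of Proposition~\ref{prop:quantmu}, which is the very reason for the specific numerical constants in $K$ and $R_2$. To complete the argument you must (i) prove the drift for $L^*$ with $b=1/R_2$ (giving $\alpha=\gamma Td/(4R_2)$, $\beta$ as stated, set $K$), and (ii) supply the $\mu(K^c)/\mu(K)$ bound before invoking Theorem~\ref{thm:poincarelyap}.
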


To see the utility of this result and the arguments which establish it, later we will estimate these constants in the following three concrete examples.
\begin{example}
\label{ex:single}
As a first example, consider the simple ``single-well" quadratic potential $U:\RR^d\rightarrow[0, \infty)$ given by $U(x)=|x|^2/2$.
Then $U$ clearly satisfies Assumption~\ref{assump:1}.  One can also easily show that $U$ satisfies Assumption~\ref{assump:2}.  In Section~\ref{sec:weightone}, we will see that in the bound~\eqref{eqn:mainconvest} we can choose $W= 1$,
\begin{align*}
\zeta^2 = \frac{3}{\gamma^2 T} + \frac{\big(\frac{\gamma}{2}+ \sqrt{\frac{\gamma^2}{4}+1} \big)^2}{2T} + \frac{1}{4T}\qquad \text{ and } \qquad \sigma = \frac{\gamma}{4}\min\{ 1, \frac{1}{T\zeta^2}\}.
\end{align*}
In this simple case as well as in the example that follows, we do not need to directly apply Theorem~\ref{thm:mainconv}.  Rather, we will be able to apply a corollary of its proof which allows one to get around unnecessary estimates needed in the case of a singular potential.
\end{example}

\begin{example}
\label{ex:double}
Next consider the double-well potential $U:(\RR^k)^N\rightarrow [0, \infty)$ given by $U(x)= (|x|^2-1)^2/4.$  Define constants $\kappa_0>0$ and $\kappa_0'>0$ by
\begin{align*}
\kappa_0 = \frac{\gamma}{2\sqrt{T+Tc^2(\gamma)}} \,\,\text{ and }\,\, \kappa_0' = \frac{27}{\kappa_0^2} +2
\end{align*}
where $c(\gamma)= \frac{\gamma}{2}+ \sqrt{\frac{\gamma^2}{4}+1}$ is as in~\eqref{def:cgamma}.  One can check that $U$ satisfies a Poincar\'{e} inequality on $(\RR^k)^N$  with some constant $\rho >0$.  Define
\begin{align*}
M^2=\frac{2\gamma^2 T \kappa_0 d}{4 c^2(\gamma)} + \frac{\sqrt{2d} \kappa_0' \gamma^2}{4 c^2(\gamma)} + (\kappa_0')^2.
\end{align*}
In Section~\ref{sec:weightone}, we will see that in this case the bound~\eqref{eqn:mainconvest} is satisfied for $W=1$,
\begin{align*}
& \zeta^2 = \frac{2+M^2}{\gamma^2 T} + \frac{\big(\frac{\gamma}{2}+ \sqrt{\frac{\gamma^2}{4}+1} \big)^2}{2T} + \frac{1}{4T},
\\
& \sigma = \frac{\gamma}{4}\min\{ 1, 1/\rho\zeta^2\}.
\end{align*}

\end{example}

\begin{example}
\label{ex:LJ}
Using the notation $x=(x_1, x_2, \ldots, x_N) \in (\RR^k)^N$, where each $x_i =( x_i^1, \ldots , x_i^k )$ belongs to $\RR^k$, we next consider a singular potential $U: (\RR^k)^N\rightarrow [0, \infty]$ of the form
\begin{align}
U(x)= \sum_{i=1}^N U_E(x_i)+ \sum_{i<j} U_I(x_i-x_j)
\end{align}
with $U_E\in C^\infty(\RR^k; [0, \infty))$ and $U_I:\RR^k\rightarrow [0, \infty]$ satisfying
\begin{align}
U_E(q)= A |q|^{a} \qquad \text{ and } \qquad U_I(q)= \begin{cases}
B |q|^{-b} &\text{ if } q\neq 0\\
\infty & \text{otherwise}
\end{cases}
\end{align}
where $A, B, b >0$ are constants and $a\geqslant 2$ is an even integer.  In the analysis that follows, we could certainly incorporate lower-order terms in both $U_E$ and $U_I$ above, but the estimates become unnecessarily complicated.  Thus we stick to the form above under these assumptions on $U_E$ and $U_I$.  Note that $U$ clearly satisfies Assumption~\ref{assump:1} for $k\geqslant 2$.  If, however, $k=1$ then we need to slightly modify the definition of the potential $U$ so that the domain of the particles $\mathcal{O}$ is connected.  That is, in this case we also set $U=\infty$ if the relation
\begin{align*}
x_1 < x_2 < \cdots < x_N
\end{align*}
is NOT satisfied.  This means that the particles must remain in the same ordering because they cannot pass one another when $k=1$.

In the Appendix, we will prove the following result showing that $U$ satisfies Assumption~\ref{assump:2} with explicit estimates on the constants in the assumption.
\begin{proposition}
\label{prop:singularconst}
$U$ as above satisfies Assumption~\ref{assump:2} with $\eta_0 = b$, $\eta_\infty=a$,
\begin{align*}
\kappa'' & =N^{5-\frac{8}{a}}Aa(a-1)k\left(\frac{128 (a-1) k^2 T}{Aa} \right)^{\frac{a-2}{a}} \\&+ N^{10+\frac{16}{b}}4 Bb(b+3)k \left( \frac{512 (b+3)k^2 T}{Bb}\right)^{\frac{b+2}{b}} +\frac{A^2a^2}{8 N^2 k T}  + \frac{B^2b^2 N^{2b+4}}{8kT},
\end{align*}
and choice of constants $c_0, d_0, c_\infty, d_\infty$ given by
 \begin{align*}
& c_0 = N^3\frac{4b^2}{B^{\frac{2}{b}}},
\\
& d_0 =N^{1- \frac{2(a-1)b}{a+b}}2A^2 a^2 \left( \frac{ A^{\frac{2}{b}}b^2}{B^{\frac{2}{b}} a^2}\right)^{\frac{(a-1)b}{a+b}},
\\
& c_\infty = \frac{A^{\frac{2}{a}} a^2}{ 2^{5-\frac{2}{a}} N^{5-\frac{2}{a}}},
\\
& d_\infty = N^{ \frac{b(6a-2)(a-1)}{a(a+b)}+ \frac{2}{a}-1}  \frac{A^{\frac{2}{a}} a^2 B^2}{ 8 B^{\frac{2}{a}} }\left( \frac{A^{\frac{2}{a}} a^2}{B^{\frac{2}{a}} b^2}\right)^{\frac{b(a-1)}{a+b}} + \frac{2 A^2 a^2}{N} + 2 B^2 b^2 N^{2b+5}.
\end{align*}
\end{proposition}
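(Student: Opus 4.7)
The plan is to verify Proposition~\ref{prop:singularconst} by direct computation of $\nabla U$ and $\nabla^2 U$, followed by careful application of Young's inequality while carefully bookkeeping the $N$-dependence of the constants. Writing $\nabla_{x_i} U_E(x_i) = Aa|x_i|^{a-2} x_i$ and $\nabla_{x_i} U_I(x_i-x_j) = -Bb|x_i-x_j|^{-b-2}(x_i-x_j)$, one gets pointwise identities $|\nabla U_E(q)| = Aa|q|^{a-1}$ and $|\nabla U_I(q)| = Bb|q|^{-b-1}$, together with blockwise Hessian bounds of order $Aa(a-1)|q|^{a-2}$ and $Bb(b+2)|q|^{-b-2}$ respectively. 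These identities reduce the proposition to scalar estimates in the variables $|x_i|$ and $|x_i-x_j|$ that can then be recombined via triangle and Cauchy--Schwarz inequalities.

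To obtain the upper bound $|\nabla U|^2 \leqslant c_0 U^{2+2/b} + d_0$, I would first apply the elementary inequality $(\sum_k a_k)^2 \leqslant N_{\text{terms}} \sum_k a_k^2$ to separate the $O(N)$ confining gradients $\nabla U_E(x_i)$ and the $O(N^2)$ interaction gradients $\nabla U_I(x_i-x_j)$; then bound the confining part by $U_E(x_i)^{2-2/a}$ (matching exponent $\eta_\infty=a$) and the singular part by $U_I(x_i-x_j)^{2+2/b}$ (matching exponent $\eta_0=b$). Using $\eta_\infty=a < $ relevant threshold allows the confining term to be absorbed into the $U^{2+2/b}$ plus a polynomial-in-$N$ constant $d_0$. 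The lower bound $c_\infty U^{2-2/a} - d_\infty \leqslant |\nabla U|^2$ is more delicate: when $U$ is large, the pigeonhole argument shows that either some $U_E(x_i)$ or some $U_I(x_i-x_j)$ is at least $U/(N + \binom{N}{2})$; in the first case $|\nabla U_E(x_i)|^2$ dominates, in the second case $|\nabla U_I(x_i-x_j)|^2$ dominates, and in either case one can absorb the ``opposite" contributions using Young's inequality at the cost of $d_\infty$.

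For the Hessian estimate $|\nabla^2 U(x)y|\leqslant \frac{1}{16Td}|\nabla U(x)|^2|y| + \kappa''|y|$, I would apply the scalar Young-type inequalities
\begin{align*}
Aa(a-1)|q|^{a-2} \leqslant \frac{1}{32TdN^{?}}\bigl(Aa|q|^{a-1}\bigr)^2 + C_E, \qquad Bb(b+2)|q|^{-b-2} \leqslant \frac{1}{32TdN^{?}}\bigl(Bb|q|^{-b-1}\bigr)^2 + C_I,
\end{align*}
with $C_E, C_I$ explicit, and then sum over indices. The factor $1/(16Td)$ in the threshold of Assumption~\ref{assump:2} forces the explicit choice of the small parameter in Young's inequality, and this in turn determines the specific exponents of $N$ in $C_E, C_I$, producing the terms $N^{5-8/a} Aa(a-1)k(128(a-1)k^2T/(Aa))^{(a-2)/a}$ and $N^{10+16/b}4Bb(b+3)k(512(b+3)k^2T/(Bb))^{(b+2)/b}$ in $\kappa''$. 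The residual terms $A^2a^2/(8N^2kT)$ and $B^2b^2 N^{2b+4}/(8kT)$ arise from absorbing diagonal cross-terms that appear when one expands $|\nabla U|^2$ using the triangle inequality in the summations.

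The main obstacle, and the bulk of the bookkeeping, will be matching the precise exponents of $N$ claimed in the statement. The powers $N^{5-8/a}$ and $N^{10+16/b}$ are not arbitrary: they track through Cauchy--Schwarz over the $O(N)$ confining terms versus $O(N^2)$ interaction terms, the Young exponents dictated by $\kappa' = 1/(16Td)$, and the subsequent reabsorption of gradient-type quantities raised to fractional powers. A straightforward but tedious chain of inequalities, organized by separating confining-from-interaction contributions and tracking which terms Young's inequality must be split against, should deliver the stated constants. No genuinely new idea is needed beyond Young's and Cauchy--Schwarz; the content is purely in the accounting.
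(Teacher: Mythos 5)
Your computations of $\nabla U$ and $\nabla^2 U$ and the upper bound $|\nabla U|^2\leqslant c_0U^{2+2/b}+d_0$ follow the paper's route (triangle/Cauchy--Schwarz plus splitting sums at a cutoff), but the core of the proposition is missed: every remaining step of your plan silently assumes that $|\nabla U(x)|^2$ controls, up to constants, the sum of the squared magnitudes of the \emph{individual} force contributions, i.e.\ $\sum_i |x_i|^{2a-2}$ and $\sum_{i<j}|x_i-x_j|^{-2b-2}$. This is exactly what is \emph{not} obvious, because $\nabla U$ is a vector sum in which confining and repulsive forces (and repulsive forces among themselves) can cancel: a particle far from the origin squeezed against a near neighbor can have a tiny net force while both $U_E(x_i)$ and $U_I(x_i-x_j)$, and their separate gradients, are huge. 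Concretely, your lower-bound argument (``pigeonhole: some term of $U$ is at least $U/(N+\binom N2)$, hence its gradient dominates, absorb the opposite contributions by Young into $d_\infty$'') fails at the word ``dominates'': the opposite contributions are unbounded functions of $x$ at precisely such near-cancellation configurations, so they cannot be absorbed into a constant $d_\infty$. The same issue undermines your Hessian step: after the termwise Young inequalities you are left needing $\sum_i|x_i|^{2a-2}+\sum_{i<j}|x_i-x_j|^{-2b-2}\lesssim |\nabla U|^2+\mathrm{const}$, which is again the nontrivial claim.

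The missing ingredient is the paper's Lemma~\ref{lem:singularH} (a reworking of Conrad--Grothaus/Herzog--Mattingly), a pointwise lower bound of the form \eqref{eqn:lowergrad} obtained by testing $\nabla U(x)$ against carefully chosen unit directions in $(\RR^k)^N$: for the confining part, a cluster construction $S_1(x)\subseteq\cdots\subseteq S_N(x)$ around a given particle in which intra-cluster interaction forces cancel exactly by antisymmetry and extra-cluster ones are controlled because separations are at least $N^{-1}$, yielding \eqref{eqn:bound1}; for the interaction part, a sign-pattern direction built from $(x_2-x_1)/|x_2-x_1|$ making all pair contributions nonnegative, yielding \eqref{eqn:bound3}. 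Squaring this lemma via Young gives \eqref{eqn:GradSingH}, and it is this bound---not a pigeonhole argument---that both produces the lower bound $c_\infty U^{2-2/a}-d_\infty\leqslant|\nabla U|^2$ and lets one dominate the Hessian bound \eqref{eqn:HessSingH} by $\tfrac1{16Td}|\nabla U|^2+\kappa''$. It is also where the specific powers of $N$ you are trying to ``account for'' actually originate (the $\sqrt N$ from normalizing test directions, the extra $N$ from summing the single-particle bounds, the $N$-dependent cutoffs $C_1,C_2$), so the exponents in $\kappa''$, $c_\infty$, $d_\infty$ cannot be recovered by bookkeeping alone without this lemma.
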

Although even the appearance of these estimates is technical, the most important consequence of this result is that it gives an estimate on the convergence rate $\sigma=\sigma_N$ depending on the number of particles in the system.  In particular, we may choose
\begin{align*}
\sigma= \sigma_N=\frac{C}{N^p \vee \rho_K}
\end{align*}
for some $p>0$ where $C>0$ is independent of $N$ and $\rho_K$ is the local Poincar\'{e} constant satisfied by $\mu$ on $K$.  If we believe that the Poincar\'{e} constant $\rho_K$ grows no faster than a polynomial in $N$, then we note that $\sigma$ decays no worse than $C'/ N^{p'}$ as $N\rightarrow \infty$.

\end{example}

\section{Gamma calculus and the case when $W\equiv 1$}
\label{sec:weightone}
In this section, we introduce the basic ideas and methods behind building the weighted $H^1$ distance
\begin{align*}
\| f\|_{\zeta, W} = \sqrt{\int_\mathcal{X} f^2 \, d\mu_W + \int_\mathcal{X} |\nabla_\zeta f|^2 \, d\mu}
\end{align*}
in which the dynamics defined by~\eqref{eqn:main} contracts.

There are two parts to building this norm.  One part determines the weight $W\in L^1(\mu)$ appearing in the measure $\mu_W$ where we recall that $\mu_W$ is defined by
\begin{align*}
\mu_W(d x \, dv) = W(x,v)\, \mu(dx \, dv)
\end{align*}
and $\mu$ is the Boltzmann-Gibbs measure as in~\eqref{eqn:Gibbs}.  The other part of the construction decides how to define the modified gradient operator $\mathcal{\nabla}_{\zeta}$.  While the need for the weight $W$ will become apparent as we allow for general potentials satisfying Assumption~\ref{assump:1} and Assumption~\ref{assump:2}, in this section we will focus solely on the latter part of the construction, in particular on how to define $\nabla_\zeta$.  In the following section, we will build off of the analysis done here and define the weight $W$.

\begin{remark}
Although we have already defined $\nabla_\zeta$ in~\eqref{eqn:modgrad} above, we will leave its meaning open in this section, allowing us to discover it naturally.  At this point at the very least, we know that it needs to have enough structure so that squared distance $\int_\mathcal{X} |\nabla_\zeta f|^2 \, d\mu$ is equivalent to $\int_\mathcal{X} |\nabla f|^2 \, d\mu$ where $\nabla$ is the usual gradient operator on $(\RR^k)^N$.
\end{remark}

\begin{remark}
The results in this section focus on the case when $W\equiv 1$,  the resulting construction outlined here is equivalent to those given in \cite{Villani2009, Baudoin2017a}.  However, our approach more closely follows the methods in~\cite{Baudoin2017a}.
\end{remark}

\subsection{The basic idea: hope for contractivity in $L^2(\mu)$}  The idea behind the construction of the modified gradient $\nabla_\zeta$ becomes clear by starting the analysis hoping that, for some constant $\sigma>0$, the inequality
\begin{align}
\label{eqn:attempest}
\int_\mathcal{X} (P_t f)^2  \, d\mu \leqslant e^{-\sigma t} \int_\mathcal{X} f^2 \, d\mu
\end{align}
is satisfied for all $f\in L^2(\mu)$ with $\int_\mathcal{X} f \, d\mu=0$ and all $t\geqslant 0$.  The bound above would then imply exponential convergence to equilibrium in $L^2(\mu)$ as we have centered the observable $f\in L^2(\mu)$.  Although for the dynamics~\eqref{eqn:main} we cannot arrive at this estimate, by starting here we will see how and why the need for the modified distance $\| f\|_{\zeta, 1}$ arises to combat the absence of a contraction in the usual $L^2(\mu)$ sense.

To begin the analysis and attempt to prove estimates like~\eqref{eqn:attempest}, we need to be able to differentiate with respect to time and commute this operation with the integral in~\eqref{eqn:attempest}.  We will be able to do this employing the appropriate smoothing properties satisfied by the Markov semigroup $\{P_t\}_{t\geqslant 0}$ as outlined in the following lemma and corollary.
\begin{lemma}
\label{lem:hor}
Suppose that the potential $U$ satisfies Assumption~\ref{assump:1}.  Then for all $(x,v) \in \mathcal{X}$, the distribution of the process $(x(t), v(t))$ solving~\eqref{eqn:main} with the initial value $(x(0), v(0))=(x,v)$  is absolutely continuous with respect to the Lebesgue measure on $\mathcal{X}$ and its probability density, denoted by $p_t((x,v), (x', v'))$, is smooth; that is,
\begin{align*}
(t, (x,v), (x', v'))\mapsto p_t((x,v), (x', v')) \in C^\infty((0, \infty) \times \mathcal{X} \times \mathcal{X}).
\end{align*}
\end{lemma}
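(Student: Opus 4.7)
The result is a standard hypoellipticity statement, and the plan is to verify H\"ormander's bracket condition for the generator $L$ and then invoke Malliavin calculus (or equivalently, H\"ormander's hypoellipticity theorem applied to the Kolmogorov equations) to pass from the existence of pathwise solutions to smoothness of the transition density. The non-explosion statement already noted in the excerpt, which is obtained via the Hamiltonian Lyapunov function $H(x,v)$, guarantees that the process remains in $\mathcal{X}$ for all $t \geqslant 0$ almost surely, so the analysis can be carried out locally on the open set $\mathcal{X}$ where the coefficients of $L$ are $C^\infty$ by Assumption~\ref{assump:1}.

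First I would write the generator in H\"ormander's sum-of-squares-plus-drift form $L = X_0 + \tfrac{1}{2}\sum_{i=1}^{d} X_i^2$ with
\[
X_0 = v \cdot \nabla_x - \gamma v \cdot \nabla_v - \nabla U \cdot \nabla_v, \qquad X_i = \sqrt{2\gamma T}\, \partial_{v_i}, \,\, i = 1, \ldots, d.
\]
A direct calculation gives $[X_0, X_i] = -\sqrt{2\gamma T}\,\partial_{x_i} + \gamma X_i$, so the Lie algebra generated by $\{X_1,\ldots,X_d\}$ together with $[X_0,X_i]$ already spans $\RR^{2d}$ at every point of $\mathcal{X}$; in particular the strong H\"ormander condition holds uniformly on $\mathcal{X}$.

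Next, I would conclude smoothness of the density in two standard ways that are equivalent in substance. The analytic route is to observe that $p_t(y, y')$ satisfies, in the distributional sense on $(0,\infty)\times\mathcal{X}\times\mathcal{X}$, the forward Kolmogorov equation $\partial_t p = L^*_{y'} p$ in the forward variable and the backward equation $\partial_t p = L_y p$ in the initial variable; since both $\partial_t - L^*$ and $\partial_t - L$ are hypoelliptic on $(0,\infty)\times\mathcal{X}$ by H\"ormander's theorem, joint smoothness in $(t, y, y')$ follows. The probabilistic route is to apply the Malliavin calculus machinery (as in Nualart's treatment): the H\"ormander condition above implies that the Malliavin covariance matrix of $(x(t), v(t))$ is almost surely invertible with moments of all orders, which yields both absolute continuity with respect to Lebesgue measure on $\mathcal{X}$ and the smoothness of the density.

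The only point that requires mild care is that $\mathcal{X}$ is an open proper subset of $\RR^{2d}$ rather than all of Euclidean space. This is handled by localization: on any compact subset of $\mathcal{X}$ the coefficients of $L$ are smooth, so the hypoelliptic regularity of the forward and backward Kolmogorov operators is local, and the non-explosion estimate ensures that no mass escapes to the boundary $\partial\mathcal{X}$ in finite time. I expect this localization, together with citing the appropriate versions of H\"ormander's theorem (e.g. \cite{Hormander1967a}) and of the Malliavin criterion, to be the only slightly delicate part of the writeup; the bracket computation and the algebraic verification of the H\"ormander condition are essentially immediate from the structure of Langevin dynamics.
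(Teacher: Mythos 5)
Your proposal is correct and follows essentially the same route as the paper, whose proof simply invokes H\"ormander's hypoellipticity theorem (citing \cite{Hormander1967a}, with details in \cite{HerzogMattingly2017}); your bracket computation $[X_0,X_i]=-\sqrt{2\gamma T}\,\partial_{x_i}+\gamma X_i$ and the localization to the open set $\mathcal{X}$ are exactly the ingredients that proof relies on. One terminological nit: since the diffusion fields $X_i$ alone span only the $v$-directions and brackets with the drift $X_0$ are needed, what you verify is the weak (parabolic) H\"ormander condition rather than the strong one, which is precisely what hypoellipticity of $\partial_t-L$ and $\partial_t-L^*$ requires.
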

\begin{proof}
This follows by H\"{o}rmander's hypoellipticity theorem~\cite{Hormander1967a}.  A proof can be found in~\cite{HerzogMattingly2017}.
\end{proof}

A basic corollary of Lemma~\ref{lem:hor} is the following fact which we will use throughout this and the following section.
\begin{corollary}
\label{cor:semireg}
Suppose that the potential $U$ satisfies Assumption~\ref{assump:1}.  For any
 $f\in C^\infty_0(\mathcal{X}; \RR)$, let $\f\in C^\infty_b(\mathcal{X}; \RR)$ be given by
\begin{align}
\label{eqn:centerob}
\f(x,v)= f(x,v)- \int_\mathcal{X} f \, d\mu.
\end{align}
Then $(t, (x,v))\mapsto P_t \f(x,v): [0, \infty) \times \mathcal{X}\rightarrow \RR \in C^\infty_b([s,S] \times \mathcal{X})$ for any $0<s<S< \infty$.  Furthermore, for any $t>0$
\begin{align*}
\frac{d}{dt} P_t \f = P_t L\f = L P_t \f.
\end{align*}
\end{corollary}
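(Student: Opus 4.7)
The plan is to reduce everything to $P_t f$ for $f \in C_0^\infty(\mathcal{X};\RR)$: since the Hamiltonian Lyapunov argument recalled just before the corollary ensures conservativeness ($P_t 1 = 1$), we have $P_t \f = P_t f - \int_\mathcal{X} f\, d\mu$, and boundedness of $P_t\f$ on $[s,S]\times\mathcal{X}$ follows immediately from $\|P_t\f\|_\infty \leqslant \|\f\|_\infty$ together with the $L^\infty$-contractivity of the Markov semigroup.

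For smoothness I would use Lemma~\ref{lem:hor} and the representation
\[
P_t f(x,v) = \int_\mathcal{X} p_t((x,v),(x',v'))\, f(x',v')\, dx'\, dv'.
\]
Because $f$ has compact support $K \subset \mathcal{X}$, the integral reduces to one over $K$ for all $(t,x,v)$, and joint smoothness of $p_t$ on $(0,\infty)\times\mathcal{X}\times\mathcal{X}$ together with local uniform continuity of its derivatives on compact sets allows differentiation under the integral sign to any order in $t$ and $(x,v)$. This yields joint smoothness of $P_tf$ (and hence of $P_t\f$) on $(0,\infty)\times\mathcal{X}$.

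To obtain the two evolution identities, I would proceed as follows. It\^{o}'s formula applied to $f(x(t),v(t))$ gives
\[
f(x(t),v(t)) = f(x,v) + \int_0^t Lf(x(s),v(s))\, ds + M_t,
\]
and $M_t$ is a true martingale because $\mathrm{supp}(f)$ is a compact subset of $\OO \times (\RR^k)^N$, so $\nabla U$ and therefore $Lf$ are bounded on $\mathrm{supp}(f)$. Taking expectations and differentiating in $t$ yields $\partial_t P_tf = P_t Lf$. Separately, $\partial_t P_tf = L P_tf$ follows by differentiating $P_{t+h}f = P_h(P_tf)$ at $h = 0^+$, using the smoothness of $P_tf$ just established to place it in the domain of $L$; equivalently, this is the Kolmogorov backward equation, which holds distributionally by abstract semigroup theory and then classically by the smoothness. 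The identities transfer from $f$ to $\f$ since $L\f = Lf$ and constants are killed by $L$.

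The main obstacle I anticipate is arranging differentiation under the integral and the boundedness of higher-order derivatives of $P_t\f$ uniformly on the unbounded slab $[s,S]\times\mathcal{X}$: the derivatives $\partial^\alpha_{(x,v)} p_t((x,v),(x',v'))$ need not be uniformly integrable in $(x',v')$ as $(x,v)$ varies over $\mathcal{X}$. Since all subsequent uses of this corollary only require pointwise smoothness of $P_t\f$, the pointwise identities $\partial_t P_t\f = LP_t\f = P_tL\f$, and the $L^\infty$-bound on $P_t\f$ itself, a localization argument pairing against compactly supported test functions in the Gamma-calculus integrals should suffice; if genuinely global bounds on high-order derivatives were needed, the remedy would be to represent $\partial^\alpha_{(x,v)} P_tf$ via Jacobians of the stochastic flow and control their moments by Gronwall bounds on the variational equation, using the compact support of the integrand to confine the estimate.
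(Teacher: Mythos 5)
The paper never actually proves this corollary: it is asserted as a ``basic corollary'' of Lemma~\ref{lem:hor} and then used, so your write-up is best judged as filling in the details the authors left implicit, and it does so along exactly the intended lines. Conservativeness (non-explosion via the Hamiltonian) reduces $P_t\f$ to $P_tf$ minus a constant and gives the sup bound; compact support of $f$ plus the joint smoothness of $p_t$ from Lemma~\ref{lem:hor} justifies differentiation under the integral over the fixed compact set $\mathrm{supp}(f)$, yielding joint smoothness of $P_tf$ on $(0,\infty)\times\mathcal{X}$; the Dynkin/It\^{o} argument with a bounded martingale integrand gives $\partial_t P_tf=P_tLf$; and differentiating $P_{t+h}f=P_h(P_tf)$ at $h=0^+$, localized with the stopping times $\tau_n$, gives $\partial_tP_tf=LP_tf$. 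All of that is correct.

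Where you fall short of the literal statement is the subscript $b$: membership in $C^\infty_b([s,S]\times\mathcal{X})$ claims derivative bounds uniform over the noncompact $\mathcal{X}$, and, as you yourself note, this does not follow from Lemma~\ref{lem:hor}, since H\"ormander's theorem is local and the interior hypoelliptic estimates degrade near the singular set of $U$ and at infinity. Two cautions on how you handle this. First, dismissing it as unneeded is a bit quick: in the later computations the exchange of $\frac{d}{dt}$ with $\int\cdot\,d\mu$ and the use of $\int_\mathcal{X} L(P_t\f)^2\,d\mu=0$ implicitly require control of terms like $\nabla U\cdot\nabla_v P_t\f$ in $L^1(\mu)$, so some quantitative bound on $\nabla_{x,v}P_t\f$ (global, or at least a $\mu$-integrable envelope uniform in $t\in[s,S]$) really is doing work there; a purely pointwise statement plus the $L^\infty$ bound does not by itself license those manipulations. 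Second, your fallback via the variational equation is delicate as stated: compact support of $f$ confines the endpoint of the trajectory, not the path, and the Jacobian is driven by $\nabla^2U$ along the whole path, which blows up at the singularity, so a naive Gronwall bound does not close without additional Lyapunov-weighted moment estimates. To be fair, the paper supplies no argument for this point either; but if one wants the corollary exactly as stated, this is the piece that still needs a genuine proof (e.g., derivative bounds on $p_t$ or Bismut-type formulas with Lyapunov weights), and your honest flagging of it is the right instinct.
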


Following the idea above, we now try to see if a contraction is possible in $L^2(\mu)$.  To do the calculation, suppose that $f\in C_0^\infty(\mathcal{X}; \RR)$ with $\f=f- \int_\mathcal{X} f\, d\mu$ and differentiate at time $t>0$ employing Corollary~\ref{cor:semireg} to produce
\begin{align*}
\frac{d}{dt}\int_\mathcal{X} (P_t \f)^2  \, d\mu &= 2 \int_\mathcal{X} (P_t \f) L (P_t \f) \, d\mu\\
&=  \int_\mathcal{X} L (P_t \f)^2 - 2 \Gamma(P_t \f)\, d\mu,
\end{align*}
where for $g\in C^2(\mathcal{X}; \RR)$
\begin{align}
\label{eqn:gamma0}
\Gamma(g)& := \tfrac{1}{2}[L g^2 - 2 g L g]= \gamma T |\nabla_v g|^2.
\end{align}
To understand some of the structures in the expression above, since $\mu$ is invariant for $P_t$ we have
$ \int_\mathcal{X} L (P_t \f)^2 \, d\mu =0,$
implying
\begin{align}
\label{eqn:gammafirst}
\frac{d}{dt}\int_\mathcal{X} (P_t \f)^2  \, d\mu &= - 2 \int_\mathcal{X} \Gamma(P_t \f) \, d\mu.
\end{align}
Thus if we were somehow able to produce a Poincar\'{e}-type inequality of the form
\begin{align*}
\int_\mathcal{X} \Gamma(f) \, d\mu  \geqslant \frac{\sigma}{2} \int_\mathcal{X} f^2 \, d\mu
\end{align*}
satisfied for all $f\in H^1(\mu)$ with $\int_\mathcal{X} f \, d\mu=0$ for some constant $\sigma>0$, then we would be done, for then
\begin{align*}
\nonumber
\frac{d}{dt}\int_\mathcal{X} (P_t \f)^2  \, d\mu = - 2 \int_\mathcal{X} \Gamma(P_t \f) \, d\mu  \leqslant -\sigma  \int_\mathcal{X} (P_t \f)^2 \, d\mu.
\end{align*}
Hence the estimate~\eqref{eqn:attempest} would then follow by Gronwall's inequality and a simple approximation argument.
Clearly, however, such an inequality cannot be satisfied for all such centered observables $\bar{f}=f-\int_\mathcal{X} f \, d\mu$ with $f\in C_0^\infty(\mathcal X; \RR)$.  Nevertheless, this idea lays the foundation for what follows.

Central to the issue above are the $x$ directions missing in $\Gamma$, and we need to be able to produce these directions so that we can apply the actual Poincar\'{e} inequality with respect to $\mu$ which reads
\begin{align*}
\int_\mathcal{X} |\nabla_v f|^2 + |\nabla_x f|^2 \, d\mu \geqslant \frac{1}{\rho} \int_\mathcal{X} f^2 \, d\mu
\end{align*}
for all $f\in H^1(\mu)$ with $\int_X f \, d\mu =0$ for some constant $\rho>0$.  In particular, the term $|\nabla_x P_t \f|^2$ is clearly not available in expression~\eqref{eqn:gammafirst}.  Thus to produce the missing directions, one goes back to the very beginning of the idea to modify the form of the original functional, this time differentiating an expression of the form
\begin{align}
\label{eqn:metricP}
\| P_t \f\|_{\zeta, 1}^2= \int_\mathcal{X} (P_t \f)^2 +| \nabla_{\zeta} P_t \f|^2 \, d\mu
\end{align}
where $f\in C_0^\infty(\mathcal{X}; \RR)$ and $\f= f- \int_\mathcal{X} f \, d\mu$.  As opposed to the $L^2(\mu)$ topology, this time we are working with a distance equivalent to the $H^1(\mu)$ norm. From this point, however, the analysis repeats itself as one tries to ``tune" the metric above by choosing the gradient $\nabla_{\zeta}$ appropriately.  We will do this calculation now, but using the methods of the Gamma calculus following the ideas in~\cite{Baudoin2017a}.

\subsection{Gamma calculus and the definition of $\nabla_\zeta$}

For constants $a,b,c \in \RR_{\neq 0}$ to be determined later, define the differential operators $Y$ and $Z$
\begin{align}
Yf= a \nabla_v f  \qquad \text{ and } \qquad Zf  = b \nabla_x f -c  \nabla_v f
\end{align}
acting on $f\in C^1(\mathcal{X};\RR)$.  Also, set
\begin{align}
\label{eqn:Tgrad}
\nabla_\zeta= \zeta^{-1}(Y, Z) \qquad \text{ and } \qquad \mathcal{T}(f)=  |Yf|^2 + |Zf|^2= \zeta^2 |\nabla_\zeta(f)|^2
\end{align}
where $\zeta >0$ will be another tuning parameter.  Next, let
\begin{align}
\Gamma^Y(f,g) = Y f \cdot Y g\qquad \text{ and } \qquad  \Gamma^Z(f,g) = Z f \cdot Z g
\end{align}
with $\Gamma^Y(f):= \Gamma^Y(f,f)$ and $\Gamma^Z(f):= \Gamma^Z(f,f)$.

\begin{remark}
Operationally, the forms $\Gamma^Y$ and $\Gamma^Z$ are the objects that arise naturally by differentiating $\mathcal{T}(P_t \f)$, where $\mathcal{T}$ is as in~\eqref{eqn:Tgrad}, with respect to time.
\end{remark}

In light of the above remarks, let $f\in C_0^\infty(\mathcal{X}; \RR)$ and $\f=f-\int_\mathcal{X} f \, d\mu$.  Applying Corollary~\ref{cor:semireg}, we find that since $\mu$ is invariant for $(P_t)_{t\geqslant 0}$
\begin{align*}
& \frac{d}{dt} \int_\mathcal{X}   \mathcal{T}(P_t \f) \, d\mu
\\
& = \int_\mathcal{X} \frac{d}{dt}( |Y P_t \f|^2 + |Z P_t \f|^2) \, d\mu
\\
&=\int_\mathcal{X} 2 \Gamma^Y(P_t \f, L P_t \f) +2 \Gamma^Z(P_t \f, L P_t \f) \, d\mu  \\
&= \int_\mathcal{X} 2 \Gamma^Y(P_t \f, L P_t \f) - L\Gamma^Y(P_t \f)  + 2 \Gamma^Z(P_t \f, L P_t \f)- L \Gamma^Z(P_t \f) \, d\mu \\
&=: - \int_\mathcal{X} 2\Gamma^Y_2(P_t \f)  +  2\Gamma^Z_2(P_t \f) \, d\mu .
\end{align*}

\begin{remark}
Note that $\Gamma_2^Z$ and $\Gamma_2^Z$ on the last line above are the ``iterates" of the forms $\Gamma^Y$ and $\Gamma^Z$, and they are defined by
\begin{align*}
& \Gamma_2^Y(g) = \frac{1}{2}[L \Gamma^Y(g) - 2\Gamma^Y(g, L g)],
\\
& \Gamma_2^Z(g) = \frac{1}{2}[L \Gamma^Z(g)-2\Gamma^Z(g, Lg)].
\end{align*}
Here by ``iterate" we mean in the sense that $\Gamma(g)$ above is the iterate of the standard product as in~\eqref{eqn:gamma0}.
\end{remark}

We next calculate $\Gamma_2^Y$ and $\Gamma_2^Z$.  Letting $[A,B]:=AB-BA$ denote the commutator of operators $A$ and $B$, for a generic test function $g\in C^1(\mathcal{X}; \RR)$ note that since $[L, Y]= a( \gamma \nabla_v - \nabla_x)$,
\begin{align*}
\Gamma_2^Y(g)& =\gamma T |\nabla_v (Yg)|^2 + Yg \cdot [L, Y] g\\
&=   \gamma T | \nabla_v (Y g)|^2 - \frac{a}{b} Yg \cdot Z g + \big(\gamma - \frac{c}{b}\big) \Gamma^Y(g).
 \end{align*}
Similarly
\begin{align*}
\Gamma_2^Z(g) &= \gamma T | \nabla_v(Zg)|^2+ Zf \cdot [L, Z](g) \\
&=\gamma T|\nabla_v (Zg)|^2 + \frac{c}{b} \Gamma^Z(g)+ \frac{c}{a}\big(\frac{c}{b}-\gamma \big) Yg \cdot Zg+ \frac{b}{a} \nabla^2 U Yg \cdot Zg
 \end{align*}
where the last term  in the last line above, $\nabla^2U$ denotes the Hessian matrix and $Zg$ and $Yg$ are understood to be column vectors.  Combining the two expressions then gives
\begin{align*}
\Gamma_2^Y(g) + \Gamma_2^Z(g) &= \gamma T |\nabla_v Y g|^2 + \gamma T |\nabla_v Z g|^2 + \frac{c}{b} \Gamma^Z(g) + \left( \gamma - \frac{c}{b}\right) \Gamma^Y(g)
\\
&\qquad + \left( \frac{c^2}{ba} -  \frac{c\gamma}{a} - \frac{a}{b}\right) Yg \cdot Zg  + \frac{b}{a} \nabla^2 U Yg \cdot Z g.
\end{align*}
In the above, note that we can assure that the cross term $(\ldots) Yg \cdot Zg$ without the Hessian is zero by picking $a,b,c >0$ such that $a=b=1$ and
\begin{align}
\label{eqn:cchoice}
c =c(\gamma)= \frac{\gamma}{2}+ \sqrt{\frac{\gamma^2}{4} + 1}.
\end{align}
Observe that this choice also implies
\begin{align*}
\gamma - \frac{c}{b}& = \frac{\gamma}{2}\bigg( 1- \sqrt{1+ \frac{4}{\gamma^2}}\bigg) \geqslant - \frac{2}{\gamma}.
\end{align*}
\begin{remark}
As far as we can tell, the choice of $a=b=1$ and $c=c(\gamma)>0$ as above seems optimal for $\gamma \gg 1$, in the sense that it maximizes the coefficient of $\Gamma^Z$ while minimizing the growth in $\gamma$ of the other terms in the expression for $\Gamma_2^Y(g) + \Gamma_2^Z(g)$.
\end{remark}
As a consequence of the calculations above, we record the following result.
\begin{proposition}
Let $a=b=1$ and $c(\gamma)$ be as in~\eqref{eqn:cchoice}.  Then for any $g\in C^2(\mathcal{X}; \RR)$
\begin{align}
&\Gamma_2^Y(g) + \Gamma_2^Z(g) \label{eqn:gamma2eq}
\\
& \geqslant  \gamma T |\nabla_v Y g|^2 + \gamma T |\nabla_v Z g|^2 + \gamma \Gamma^Z(g)- \frac{2}{\gamma} \Gamma^Y(g) + \nabla^2 U Yg \cdot Z g. \notag
\end{align}
 \end{proposition}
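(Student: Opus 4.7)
The statement is essentially the assembly of the computations already carried out in the preceding paragraphs, so the plan is to verify that each coefficient in the stated identity for $\Gamma_2^Y(g)+\Gamma_2^Z(g)$ collapses as claimed once $a=b=1$ and $c=c(\gamma)$ are plugged in. First, I would record the identity derived just above the proposition,
\begin{align*}
\Gamma_2^Y(g) + \Gamma_2^Z(g) &= \gamma T |\nabla_v Yg|^2 + \gamma T |\nabla_v Zg|^2 + \tfrac{c}{b}\,\Gamma^Z(g) + \bigl(\gamma - \tfrac{c}{b}\bigr)\,\Gamma^Y(g)\\
&\quad + \bigl(\tfrac{c^2}{ab} - \tfrac{c\gamma}{a} - \tfrac{a}{b}\bigr)\,Yg\cdot Zg + \tfrac{b}{a}\,\nabla^2 U\, Yg\cdot Zg,
\end{align*}
which follows from the two formulas for $\Gamma_2^Y$ and $\Gamma_2^Z$ displayed earlier together with $[L,Y]=a(\gamma\nabla_v-\nabla_x)$ and the analogous bracket for $Z$. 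With $a=b=1$ this reduces to a single statement about the coefficients of $\Gamma^Z(g)$, $\Gamma^Y(g)$, and the cross term $Yg\cdot Zg$.

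The first step is to show that the non-Hessian cross term vanishes. With $a=b=1$ its coefficient equals $c^2-c\gamma-1$, and the positive root of the quadratic $c^2-\gamma c-1=0$ is precisely $c(\gamma)=\tfrac{\gamma}{2}+\sqrt{\tfrac{\gamma^2}{4}+1}$; so the cross term disappears by construction. Meanwhile $\tfrac{b}{a}\,\nabla^2U\,Yg\cdot Zg=\nabla^2 U\,Yg\cdot Zg$ survives unchanged and appears on the right-hand side of the proposition.

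Second, I would bound the remaining two scalar coefficients. Since $\sqrt{\tfrac{\gamma^2}{4}+1}\geq \tfrac{\gamma}{2}$, we have $c(\gamma)\geq \gamma$, so $\tfrac{c}{b}\Gamma^Z(g)\geq \gamma\,\Gamma^Z(g)$. For the $\Gamma^Y$ term, I need the elementary estimate
\[
\gamma-c(\gamma) \;=\; \tfrac{\gamma}{2}\Bigl(1-\sqrt{1+\tfrac{4}{\gamma^2}}\Bigr)\;\geq\; -\tfrac{2}{\gamma},
\]
which is equivalent to $\sqrt{1+\tfrac{4}{\gamma^2}}-1\leq \tfrac{4}{\gamma^2}$; this in turn follows from the trivial bound $\sqrt{1+x}\leq 1+x$ for $x\geq 0$ applied with $x=4/\gamma^2$. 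Combining this with the previous step yields the displayed inequality \eqref{eqn:gamma2eq}.

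The computation is essentially routine once the identity above the statement is in hand; the only mildly delicate point is checking that the specific choice $c(\gamma)$ kills the cross term and simultaneously produces the clean $-2/\gamma$ lower bound for the $\Gamma^Y$ coefficient. I expect no real obstacle beyond keeping track of signs; in particular, no structural hypothesis on $U$ enters this lemma, since the Hessian contribution is left as $\nabla^2 U\,Yg\cdot Zg$ and will only be controlled later via Assumption~\ref{assump:2}.
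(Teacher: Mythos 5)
Your proposal is correct and follows essentially the same route as the paper: it starts from the combined identity for $\Gamma_2^Y(g)+\Gamma_2^Z(g)$, notes that $c(\gamma)$ is exactly the positive root of $c^2-\gamma c-1=0$ so the non-Hessian cross term vanishes, and then uses $c(\gamma)\geqslant\gamma$ together with $\gamma-c(\gamma)=\tfrac{\gamma}{2}\bigl(1-\sqrt{1+\tfrac{4}{\gamma^2}}\bigr)\geqslant-\tfrac{2}{\gamma}$, which is precisely how the paper records the proposition. The coefficient checks and the elementary inequality $\sqrt{1+x}\leqslant 1+x$ are all valid, so there is nothing to add.
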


We next consider a simple example of a condition on $U$ which allows us to easily conclude exponential convergence to equilibrium in an explicit way from this point in our analysis.
\begin{corollary}
\label{cor:boundedhessian}
Suppose that the potential $U$ satisfies Assumption~\ref{assump:1} and the Hessian $\nabla^2 U$ has globally bounded spectrum; that is, there exists a constant $M>0$ such that
\begin{align}
|\nabla^2 U(x) y | \leqslant M|y|  \,\,\,  \text{ for all }x\in \mathcal{O}, y \in \RR^d.
\end{align}
Suppose also that the Boltzmann-Gibbs measure $\mu$ satisfies the Poincar\'{e} inequality on the whole space $\mathcal{X}$ with constant $\rho >0$.  Define constants
\begin{align*}
\zeta^2 = \frac{2+M^2}{\gamma^2 T} + \frac{c^2}{2T} + \frac{1}{4T} \qquad \text{ and } \qquad \sigma = \frac{\gamma}{4}\min\big\{ 1, \frac{1}{\rho \zeta^2}\big\}
\end{align*}
where $c=c(\gamma)>0$ is as in~\eqref{eqn:cchoice}.  Then we have the following estimate for all $t\geqslant 0$
\begin{align*}
\| P_t f \|_{\zeta, 1}^2 \leqslant e^{-\sigma t} \|f\|^2_{\zeta, 1}
\end{align*}
for any $f\in H^1(\mu)$ with $\int_\mathcal{X} f \, d\mu =0$.

\end{corollary}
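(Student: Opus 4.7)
The plan is to differentiate $\|P_t \bar{f}\|_{\zeta,1}^2$ in time, bound the derivative above by a negative multiple of $\|P_t \bar{f}\|_{\zeta,1}^2$ itself, and conclude via Gronwall. Fix $f \in C_0^\infty(\mathcal{X}; \RR)$ with mean zero (the extension to $f \in H^1(\mu)$ of mean zero follows by a standard density argument), set $\bar{f} = f - \int_\mathcal{X} f \, d\mu$ and $g_t := P_t \bar{f}$. Corollary~\ref{cor:semireg} legitimizes all differentiations under the integral sign and the commutation of $d/dt$ with $L$. Invariance of $\mu$ yields the $L^2$ piece $\frac{d}{dt}\int g_t^2\,d\mu = -2\gamma T \int |\nabla_v g_t|^2\,d\mu$ exactly as in~\eqref{eqn:gammafirst}, and the identical manipulation applied to $|Yg_t|^2 + |Zg_t|^2$ gives the gradient piece
\begin{align*}
\frac{d}{dt}\int_\mathcal{X} (|Yg_t|^2 + |Zg_t|^2) \, d\mu = -2\int_\mathcal{X} (\Gamma_2^Y(g_t) + \Gamma_2^Z(g_t)) \, d\mu.
\end{align*}

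Next, insert the lower bound~\eqref{eqn:gamma2eq} on $\Gamma_2^Y + \Gamma_2^Z$. The only sign-indefinite contribution is the Hessian cross term $\nabla^2 U\, Yg_t \cdot Zg_t$, which the bounded Hessian hypothesis together with Cauchy-Schwarz controls by
\begin{align*}
|\nabla^2 U \, Yg_t \cdot Zg_t| \leqslant M |Yg_t||Zg_t| \leqslant \frac{\gamma}{2}|Zg_t|^2 + \frac{M^2}{2\gamma}|Yg_t|^2.
\end{align*}
Discarding the nonnegative $\gamma T(|\nabla_v Yg_t|^2 + |\nabla_v Zg_t|^2)$ terms in~\eqref{eqn:gamma2eq}, multiplying by $\zeta^{-2}$ and summing with the $L^2$ piece produces
\begin{align*}
\frac{d}{dt}\|g_t\|_{\zeta,1}^2 \leqslant -\Big(2\gamma T - \frac{4+M^2}{\gamma\zeta^2}\Big)\int_\mathcal{X} |Yg_t|^2\,d\mu - \frac{\gamma}{\zeta^2}\int_\mathcal{X} |Zg_t|^2\,d\mu.
\end{align*}

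To close the inequality, I apply the Poincar\'e inequality for $\mu$ on $\mathcal{X}$ with constant $\rho$ and then change variables via $\nabla_v g_t = Yg_t$ and $\nabla_x g_t = Zg_t + c(\gamma)Yg_t$. Young's inequality gives $|\nabla_v g_t|^2 + |\nabla_x g_t|^2 \leqslant (1+2c^2)|Yg_t|^2 + 2|Zg_t|^2$, whence
\begin{align*}
\int_\mathcal{X} g_t^2\,d\mu \leqslant \rho\int_\mathcal{X} \big((1+2c^2)|Yg_t|^2 + 2|Zg_t|^2\big)\,d\mu.
\end{align*}
The prescribed $\zeta^2$ is chosen so that the $|Yg_t|^2$ coefficient in the derivative bound is at least $\sigma(\rho(1+2c^2) + \zeta^{-2})$, while the $|Zg_t|^2$ coefficient is at least $\sigma(2\rho + \zeta^{-2})$. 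Summing these two coefficient inequalities against the Poincar\'e bound yields $\frac{d}{dt}\|g_t\|_{\zeta,1}^2 \leqslant -\sigma\|g_t\|_{\zeta,1}^2$, and Gronwall closes the argument.

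The main obstacle is the scalar verification that the explicit $\zeta^2 = \frac{2+M^2}{\gamma^2 T} + \frac{c^2}{2T} + \frac{1}{4T}$ simultaneously satisfies both coefficient inequalities with the stated $\sigma = \frac{\gamma}{4}\min\{1, 1/(\rho\zeta^2)\}$. The summand $(2+M^2)/(\gamma^2 T)$ in $\zeta^2$ is calibrated to keep $2\gamma T - (4+M^2)/(\gamma\zeta^2)$ comfortably positive after absorbing the Cauchy-Schwarz loss from the Hessian cross term, while the $c(\gamma)$-dependent and $1/(4T)$ summands build in the slack needed to absorb the Poincar\'e factor $(1+2c^2)$ in the $|Yg_t|^2$ direction without spoiling the global rate.
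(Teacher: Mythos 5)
Your proposal is correct and follows essentially the same route as the paper: differentiate $\|P_t\bar{f}\|_{\zeta,1}^2$, insert the lower bound~\eqref{eqn:gamma2eq}, absorb the Hessian cross term by Young's inequality using $|\nabla^2 U|\leqslant M$, and close with the Poincar\'{e} inequality via $\nabla_v=Y$, $\nabla_x=Z+c(\gamma)Y$, the paper merely organizing the last step by splitting the dissipation into two equal halves rather than by your direct coefficient comparison. The scalar verification you defer does go through with room to spare: with the stated $\zeta^2$ the $|Yg_t|^2$-coefficient equals $\zeta^{-2}\big(\tfrac{M^2}{\gamma}+\gamma c^2+\tfrac{\gamma}{2}\big)$ while $\sigma\big(\rho(1+2c^2)+\zeta^{-2}\big)\leqslant \zeta^{-2}\tfrac{\gamma}{2}(1+c^2)$, and the $|Zg_t|^2$-requirement holds since $\sigma(2\rho+\zeta^{-2})\leqslant \tfrac{3\gamma}{4\zeta^2}\leqslant \tfrac{\gamma}{\zeta^2}$.
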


\begin{proof}
Observe that by~\eqref{eqn:gamma2eq} and Young's inequality we have
\begin{align*}
\Gamma_2^Y(g) + \Gamma_2^Z(g) \geqslant \frac{\gamma}{2} \Gamma^Z(g) - \frac{2 +M^2}{\gamma} \Gamma^Y(g).
\end{align*}
To apply a standard approximation argument, first let $f\in C_0^\infty(\mathcal{X}; \RR)$ and $\f= f- \int_\mathcal{X} f \, d\mu$.  Corollary~\ref{cor:semireg} then implies for $t>0$

\begin{align*}
& \frac{d}{dt} \int_\mathcal{X} (P_t \f)^2 + |\nabla_{\zeta}(P_t \f)|^2 \, d\mu
\\
& = -2 \int_\mathcal{X} \gamma T \Gamma^Y(P_t \f)+ \frac{1}{\zeta^2}(\Gamma_2^Y(P_t \f)+ \Gamma^Z_2(P_t \f)) \, d\mu \\
&\leqslant - 2\int_\mathcal{X} \Big(\gamma T - \frac{2+M^2}{\gamma\zeta^2}\Big) \Gamma^Y(P_t \f) + \frac{\gamma}{2\zeta^2} \Gamma^Z(P_t \f) \, d\mu \\
&= - \int_\mathcal{X} \Big(\gamma T - \frac{2+M^2}{\gamma \zeta^2}\Big) \Gamma^Y(P_t \f) + \frac{\gamma}{2\zeta^2} \Gamma^Z(P_t \f)\, d\mu \\
&\qquad -\int_\mathcal{X} \Big(\gamma T -\frac{ 2+M^2}{ \gamma \zeta^2}\Big) \Gamma^Y(P_t \f) + \frac{\gamma}{2\zeta^2} \Gamma^Z(P_t \f)\, d\mu \\
& \leqslant - \frac{\gamma}{4} \int_\mathcal{X} |\nabla_\zeta(P_t \f)|^2 \, d\mu - \frac{\gamma}{4\zeta^2} \int_\mathcal{X} |\nabla P_t \f|^2 \, d\mu.
\end{align*}
Since $\mu$ satisfies the Poincar\'{e} inequality on $\mathcal{X}$ with respect to $\mu$ with constant $\rho>0$, we find that
\begin{align*}
\frac{d}{dt} \| P_t \f \|_{\zeta, 1}^2 & \leqslant - \frac{\gamma}{4}\min\bigg\{1 , \frac{1}{ \rho \zeta^2 }  \bigg\}  \| P_t \f\|_{\zeta, 1}
\end{align*}
for $t\geqslant 0$.  This finishes the proof of the result.
\end{proof}

\begin{example}
Returning to the setting of the single-well potential $U(x)= |x|^2/2$ discussed in Example~\ref{ex:single}, we will be able to conclude the convergence claim made there using the result above.  Indeed, since $\nabla^2 U= \text{Id}_{d\times d}$  we have $M= 1$.  Moreover, since $\mu$ is a mean zero Gaussian on $\RR^{2d}$ with covariance matrix $T\text{Id}_{2d\times 2d}$, the Poincar\'{e} constant on $\RR^{2d}$ is $T$; that is, $\rho=T$ where $\rho$ is an in the statement of Corollary~\ref{cor:boundedhessian}.  Consequently, we have the conclusion of Corollary~\ref{cor:boundedhessian} with
\begin{align*}
\zeta^2 = \frac{3}{\gamma^2 T} + \frac{\big(\frac{\gamma}{2}+ \sqrt{\frac{\gamma^2}{4}+1} \big)^2}{2T} + \frac{1}{4T}\qquad \text{ and } \qquad \sigma = \frac{\gamma}{4}\min\{ 1, \frac{1}{T\zeta^2}\}.
\end{align*}
\end{example}

\begin{remark}
In Corollary \ref{cor:boundedhessian}, if one assumes that the Boltzmann-Gibbs measure $\mu$ satisfies the log-Sobolev  inequality on the whole space $\mathcal{X}$, then a similar proof yields a convergence in entropy.  See Section 2.5 in \cite{Baudoin2017a}.
\end{remark}

\subsection{Integrated estimates and Villani's condition}  Clearly, there are many natural potentials  $U$ which do not satisfy the bounded spectrum condition as in the statement of Corollary~\ref{cor:boundedhessian}.  However, as we now see, we can weaken this boundedness assumption by employing integrated estimates and a condition originally due to Villani in \cite{Villani2009}.  Below, we recast this condition in terms of the growth of spectrum of $\nabla^2U$ to obtain better dimensionality dependence on the coefficients defining the estimate.  It should also be noted that, although this condition allows for further growth in $U$, it is not enough to control the type of growth exhibited in singular potentials.
\begin{assump}
\label{assump:3}
There exist constants $\kappa_0 , \kappa_0'>0$ such that
\begin{align}
\label{eqn:Vilcond}
|\nabla^2 U(x) y | \leqslant \kappa_0 |\nabla U(x) | |y | + \kappa_0' |y|
\end{align}
for all $x\in \mathcal{O}$ and all $y\in \RR^d$.  Furthermore, $\kappa_0 >0$ above satisfies
\begin{align*}
\kappa_0 \leqslant \frac{\gamma }{2 \sqrt{T+ Tc^2(\gamma)}}
\end{align*}
where $c(\gamma)>0$ is as in~\eqref{eqn:cchoice}.
\end{assump}

\begin{remark}The upper threshold on $\kappa_0>0$ above is not so important.  Indeed, one can obtain exponential convergence if $\kappa_0>0$ is arbitrary using the methods outlined here.  However, one has to adjust the choices of $a,b, c>0$ accordingly.  To keep what follows simple, we will maintain the original choices and employ the threshold above.
\end{remark}

Note that by Young's inequality, relation~\eqref{eqn:gamma2eq} implies
\begin{align}
& \Gamma_2^Y(g)+ \Gamma_2^Z(g) \label{eqn:gamma3eq}
\\
& \geqslant  \gamma T |\nabla_v Y g|^2 + \gamma T |\nabla_v Z g|^2 + \gamma \Gamma^Z (g)- \frac{2}{\gamma}\Gamma^Y(g) + \nabla^2 U Yg \cdot Zg \notag
\\
&\geqslant  \gamma T |\nabla_v Y g|^2 + \gamma T |\nabla_v Z g|^2 + \frac{\gamma}{2} \Gamma^Z(g) - \mathcal{R}(x, Yg), \notag
\end{align}
where
\begin{align}
\label{eqn:Rdef}
\mathcal{R}(x, y)= \frac{2}{\gamma} |y|^2+ \frac{|\nabla^2 U y |^2}{2\gamma}.
\end{align}
We now note that if the potential satisfies Assumption~\ref{assump:1} and the more mild Assumption~\ref{assump:3}, we can use the next result to subsume the term $\mathcal{R}(x, Yg)$ using integrated estimates.

\begin{remark}
Our argument below closely follows the proof given in Villani's work~\cite{Villani2009}, but it has been adapted to our setting.
\end{remark}

\begin{proposition}
Suppose that the potential satisfies Assumption~\ref{assump:1} and Assumption~\ref{assump:3}.  Then for all $f\in H^2(\mu)$ we have the estimate
\begin{align}
\nonumber \frac{\gamma}{4Tc^2} \int_\mathcal{X} |\nabla U|^2 \Gamma^Y(f) \, d\mu &\leqslant \int_\mathcal{X} \bigg(\frac{2 \gamma T \kappa_0 d}{4c^2 } + \frac{\sqrt{2d} \kappa_0' \gamma}{4c^2} \bigg)  \Gamma^Y(f)  \, d\mu \\ \qquad &\qquad + \int_\mathcal{X} \gamma T  |\nabla_v (Zf)|^2+ \gamma T  |\nabla_v (Yf)|^2 \, d\mu
\end{align}
where $c(\gamma)>0$ is as in~\eqref{eqn:cchoice}. \end{proposition}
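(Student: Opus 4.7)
The whole estimate is an integration-by-parts identity followed by three careful applications of Young's inequality. The starting point is the elementary identity
$|\nabla U|^2 e^{-H/T} = -T\,\nabla U\cdot\nabla_x e^{-H/T}$,
which comes from $\nabla_x e^{-H/T} = -\tfrac{1}{T}\nabla U\, e^{-H/T}$. Multiplying by $\Gamma^Y(f)=|Yf|^2$ and integrating by parts in $x$ (the boundary terms vanish under standard density/decay arguments for $f\in H^2(\mu)$, using the rapid decay of the Gibbs density and the fact that $|\nabla U|$ grows at most polynomially by Assumption~\ref{assump:3}) yields
\begin{equation}\label{eqn:IBPid}
\int_\mathcal{X}|\nabla U|^2\,\Gamma^Y(f)\,d\mu
= T\!\int_\mathcal{X}\!\Delta U\,\Gamma^Y(f)\,d\mu
+ T\!\int_\mathcal{X}\!\nabla U\cdot\nabla_x\Gamma^Y(f)\,d\mu.
\end{equation}

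The plan is to bound each of these two integrals so that all occurrences of $|\nabla U|^2\,\Gamma^Y(f)$ that appear on the right can be absorbed on the left with a surplus of exactly $\gamma/(4Tc^2)$. For the second term, I would first compute $\nabla_x\Gamma^Y(f)=2\,Yf\cdot\nabla_v(\nabla_x f)$ (partials commute, and $Y=\nabla_v$ since $a=1$), then use the decomposition $\nabla_x f = Zf+c\,Yf$ and the fact that $U$ depends only on $x$ to obtain
\[
\nabla U\cdot\nabla_x\Gamma^Y(f)
= 2\,Yf\cdot\nabla_v(\nabla U\cdot Zf)
+ 2c\,Yf\cdot\nabla_v(\nabla U\cdot Yf).
\]
A pointwise Cauchy-Schwarz in the $v$-variable gives $|\nabla_v(\nabla U\cdot Zf)|\leqslant|\nabla U|\,|\nabla_v Zf|$ and similarly for $Yf$ (the norms on the matrix-valued $\nabla_v Zf$, $\nabla_v Yf$ being Hilbert-Schmidt). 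Young's inequality $2ab\leqslant\delta a^2+b^2/\delta$ with the specific choices $\delta_1=1/(4Tc^2)$ for the $Zf$-term and $\delta_2=1/(4Tc)$ for the $Yf$-term then produces, after the final rescaling by $\gamma/(4Tc^2)$, exactly the constants $\gamma T$ in front of both $\int|\nabla_v Zf|^2 d\mu$ and $\int|\nabla_v Yf|^2 d\mu$, while contributing a term of the form $\frac{1+c^2}{4c^2}\int|\nabla U|^2\Gamma^Y(f)\,d\mu$ that we shall set aside to be absorbed.

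For the first integral in \eqref{eqn:IBPid} I would invoke Assumption~\ref{assump:3}. Setting $y=e_i$ in \eqref{eqn:Vilcond} gives $|\nabla^2 U e_i|\leqslant\kappa_0|\nabla U|+\kappa_0'$, whence $\|\nabla^2 U\|_{HS}^2\leqslant 2d(\kappa_0^2|\nabla U|^2+\kappa_0'^2)$ by $(a+b)^2\leqslant 2(a^2+b^2)$, and then $|\Delta U|^2=|\operatorname{tr}(\nabla^2 U)|^2\leqslant d\|\nabla^2 U\|_{HS}^2\leqslant 2d^2(\kappa_0^2|\nabla U|^2+\kappa_0'^2)$. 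A pointwise splitting followed by Cauchy-Schwarz on $\int\sqrt{\kappa_0^2|\nabla U|^2+\kappa_0'^2}\,|Yf|^2 d\mu$ and AM-GM produces a $d\kappa_0$-weighted quadratic term in $|\nabla U|\,|Yf|^2$, another $|\nabla U|^2|Yf|^2$ piece to be absorbed, and the genuinely linear-in-$|Yf|^2$ contribution with the prefactor $\sqrt{2d}\,T\kappa_0'$ that appears in the statement.

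The decisive and hardest step is the final bookkeeping: one must verify that when all of the $|\nabla U|^2\Gamma^Y(f)$ pieces produced by the Young inequalities in Steps~2 and~3 are collected, their total coefficient is strictly less than $1-\gamma/(4Tc^2)$ — only then can the term be absorbed to leave the stated $\gamma/(4Tc^2)$ on the left. This is precisely what the quantitative threshold $\kappa_0\leqslant\gamma/(2\sqrt{T+Tc^2})$ in Assumption~\ref{assump:3} buys us: that bound says $\kappa_0^2\cdot 4T(1+c^2)\leqslant\gamma^2$, which is exactly the condition that the absorption is possible with the announced constant. The main obstacle is therefore not any single estimate, but a simultaneous calibration of the three Young parameters so that the coefficients of $\int|\nabla_v Zf|^2 d\mu$ and $\int|\nabla_v Yf|^2 d\mu$ come out equal to $\gamma T$, the coefficient of $\int\Gamma^Y(f)\,d\mu$ matches the declared expression involving $d\kappa_0$ and $\sqrt{2d}\,\kappa_0'$, and the absorbable $|\nabla U|^2\Gamma^Y(f)$ contribution stays under the threshold forced by $\kappa_0$.
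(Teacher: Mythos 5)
Your route is the same as the paper's: integrate by parts in $x$ against the Gibbs factor $e^{-U/T}$ to trade $|\nabla U|^2\Gamma^Y(f)$ for $\Delta U\,\Gamma^Y(f)$ plus a mixed $\partial^2_{vx}f$ cross term, decompose $\nabla_x f=Zf+c\,Yf$, bound $|\Delta U|$ through Assumption~\ref{assump:3}, and finish with Young's inequality and absorption; your calibration $\delta_1=1/(4Tc^2)$, $\delta_2=1/(4Tc)$ does reproduce the $\gamma T$ coefficients after multiplying through by $\gamma/(4Tc^2)$, exactly as in the paper's (terse) argument.

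The gap is in the step you yourself flag as decisive. First, the threshold $\kappa_0\leqslant \gamma/(2\sqrt{T+Tc^2})$ in Assumption~\ref{assump:3} is not what makes the absorption work here, and it is not needed in this proposition at all: every $\kappa_0$-dependent remainder is allowed to land in the $\Gamma^Y(f)$ coefficient on the right-hand side, so no smallness of $\kappa_0$ enters. The threshold is only used downstream, in Corollary~\ref{cor:VilH2}, where the proposition is applied to $\tfrac{\kappa_0^2}{\gamma}\int|\nabla U|^2\Gamma^Y(g)\,d\mu$ and one needs $4Tc^2\kappa_0^2/\gamma^2\leqslant c^2/(1+c^2)\leqslant 1$ so that the $\gamma T\big(|\nabla_v Yg|^2+|\nabla_v Zg|^2\big)$ terms it generates are covered by the matching terms in the lower bound for $\Gamma_2^Y+\Gamma_2^Z$. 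Second, the bookkeeping you propose (``total coefficient strictly less than $1-\gamma/(4Tc^2)$'') does not parse — that difference is not even dimensionless — and, more substantively, the absorption cannot deliver the advertised pair of constants simultaneously: once the coefficients of $\int|\nabla_v Zf|^2$ and $\int|\nabla_v Yf|^2$ are pinned at $\gamma T$, your own Young choices force a remainder of at least $\tfrac{1+c^2}{4c^2}\int|\nabla U|^2\Gamma^Y(f)\,d\mu$ (before rescaling), plus the piece coming from $\Delta U$; moving these to the left multiplies the surviving constant by $1-\theta<1$, and optimizing the prefactor yields a left-hand constant of order $\gamma/\big(4T(1+c^2)\big)$ rather than $\gamma/(4Tc^2)$. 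So the verification you promise would not close as described; you should either carry the degraded constant (which is harmless for the way the estimate is used, since only the form of $M^2$ in~\eqref{eqn:M22} matters) or enlarge the right-hand coefficients. In fairness, the paper's own one-sentence conclusion ``apply Young's inequality several times, picking $\alpha=\gamma/(4Tc^2)$'' glosses over the same point.
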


\begin{proof}
Observe that for $f\in C_0^\infty(\mathcal{X}; \RR)$, integration by parts implies
\begin{align*}
& \frac{1}{T}\int_\mathcal{O} |\nabla U|^2 \Gamma^Y(f) e^{-\frac{U}{T}} \, dx = - \int  \nabla U \Gamma^Y(f) \cdot \nabla( e^{-\frac{U}{T}}) \, dx \\
&= \int_\mathcal{O}  \Delta U \Gamma^Y(f) e^{-\frac{U}{T}} \, dx + 2 \sum_{j, \ell} \int_\mathcal{O} \partial_{x_j} U \partial_{v_\ell} f \partial^2_{v_\ell x_j} f e^{-\frac{U}{T}} \, dx \\
&\leqslant \int_\mathcal{O} | \Delta U|  \Gamma^Y(f) e^{-\frac{U}{T}} \, dx + 2 \int_\mathcal{O} |\nabla U| |Y f| |Y( \nabla_x f)| e^{-\frac{U}{T}} \, dx.
\end{align*}
Writing the expressions in terms of $Y$ and $Z$ produces for any $\alpha >0$
\begin{align*}
& \alpha \int_\mathcal{O} |\nabla U|^2 \Gamma^Y(f) e^{-\frac{U}{T}} \, dx
\\
& \leqslant
\alpha T \int_\mathcal{O} |\Delta U| \Gamma^Y(f) e^{-\frac{U}{T}} \, dx + 2\alpha T  \int_\mathcal{O} |\nabla U| |Y f| |\nabla_v (Zf)| e^{-\frac{U}{T}} \,dx \\
 &\qquad  +2 c \alpha T \int_\mathcal{O} |\nabla U| |Y f| |\nabla_v (Yf)| e^{-\frac{U}{T}}\, dx.
\end{align*}
Observe that if $e_i$ denotes the standard orthonormal basis on $(\RR^k)^N$, Assumption~\ref{assump:3} implies
\begin{align*}
|\Delta U |= \sqrt{\sum_{i} |\nabla^2 U e_i \cdot e_i|^2 } \leqslant \sqrt{2d}\kappa_0 |\nabla U| + \sqrt{2d} \kappa_0'.
\end{align*}
Plugging this into the estimate above, one can then apply Young's inequality several times to arrive at the claimed estimate by picking $\alpha =\gamma/(4Tc^2)$
\end{proof}

Applying the previous result, we arrive at the needed integrated estimate in this special case.
\begin{corollary}
\label{cor:VilH2}
Suppose that the potential $U$ satisfies Assumption~\ref{assump:1} and Assumption~\ref{assump:3}.  Then for all  $g\in C^\infty_b(\mathcal{X};\RR)$ we have
\begin{align*}
\int_\mathcal{X} \Gamma_2^Y(g) + \Gamma_2^Z(g) \, d\mu \geqslant \int_\mathcal{X} \frac{\gamma}{2} \Gamma^Z(g) -\Big( \frac{2+ M^2}{\gamma}\Big)  \Gamma^Y(g) \, d\mu
\end{align*}
where
\begin{align}
\label{eqn:M22}
M^2=\frac{2\gamma^2 T \kappa_0 d}{4 c^2(\gamma)} + \frac{\sqrt{2d} \kappa_0' \gamma^2}{4 c^2(\gamma)} + (\kappa_0')^2.
\end{align}
Consequently, the conclusion of Corollary~\ref{cor:boundedhessian} holds with this choice of $M$.
\end{corollary}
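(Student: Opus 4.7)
The plan is to derive the claimed integrated bound by starting from the pointwise estimate~\eqref{eqn:gamma3eq} and then invoking the preceding integration-by-parts proposition to handle the bad term $\mathcal{R}(x, Yg)$, which contains an $|\nabla^2 U\, Yg|^2$ factor that cannot be controlled pointwise under the weaker Villani-type hypothesis in Assumption~\ref{assump:3}.

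First I would record that for $g \in C_b^\infty(\mathcal{X};\RR)$, relation~\eqref{eqn:gamma3eq} gives the pointwise inequality
\begin{align*}
\Gamma_2^Y(g) + \Gamma_2^Z(g) \geqslant \gamma T |\nabla_v Y g|^2 + \gamma T |\nabla_v Z g|^2 + \tfrac{\gamma}{2}\Gamma^Z(g) - \mathcal{R}(x,Yg),
\end{align*}
with $\mathcal{R}(x,y) = \tfrac{2}{\gamma}|y|^2 + \tfrac{1}{2\gamma}|\nabla^2 U\, y|^2$. Apply Assumption~\ref{assump:3} together with $(a+b)^2 \leqslant 2a^2 + 2b^2$ to obtain
\begin{align*}
\mathcal{R}(x, Yg) \leqslant \tfrac{2 + (\kappa_0')^2}{\gamma}\Gamma^Y(g) + \tfrac{\kappa_0^2}{\gamma}|\nabla U|^2\, \Gamma^Y(g).
\end{align*}
The threshold $\kappa_0 \leqslant \gamma/(2\sqrt{T + Tc^2(\gamma)})$ imposed in Assumption~\ref{assump:3} is precisely what forces $\kappa_0^2/\gamma \leqslant \gamma/(4Tc^2)$, so after integration against $\mu$ the term $\tfrac{\kappa_0^2}{\gamma}|\nabla U|^2 \Gamma^Y(g)$ is dominated by $\tfrac{\gamma}{4Tc^2}|\nabla U|^2\Gamma^Y(g)$.

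Second, I would integrate the pointwise bound against $\mu$ and plug in the previous proposition (with $f$ replaced by $g$) to control $\tfrac{\gamma}{4Tc^2}\int |\nabla U|^2 \Gamma^Y(g)\,d\mu$. The key observation is that the integration-by-parts identity gives exactly the quantity $\gamma T \int(|\nabla_v Yg|^2 + |\nabla_v Zg|^2)\,d\mu$ on the right-hand side, matching term-by-term what appears as a \emph{positive} contribution in the integrated version of~\eqref{eqn:gamma3eq}. Hence those two quantities cancel, and all that is left are contributions to $\Gamma^Y(g)$ with constants
\begin{align*}
\tfrac{2 + (\kappa_0')^2}{\gamma} + \tfrac{1}{\gamma}\Bigl(\tfrac{2\gamma^2 T \kappa_0 d}{4c^2} + \tfrac{\sqrt{2d}\,\kappa_0' \gamma^2}{4c^2}\Bigr) = \tfrac{2 + M^2}{\gamma},
\end{align*}
with $M^2$ given by~\eqref{eqn:M22}, and the positive $\tfrac{\gamma}{2}\Gamma^Z(g)$ contribution unchanged. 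This yields the stated inequality.

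Finally, for the ``Consequently" clause, I would revisit the proof of Corollary~\ref{cor:boundedhessian}. That proof never used the pointwise bound on $\Gamma_2^Y + \Gamma_2^Z$ beyond integrating it against $\mu$; it relied only on the integrated version of $\Gamma_2^Y(g) + \Gamma_2^Z(g) \geqslant \tfrac{\gamma}{2}\Gamma^Z(g) - \tfrac{2+M^2}{\gamma}\Gamma^Y(g)$ applied to $g = P_t \bar f$. Since we have just established exactly this integrated inequality (with the new $M$), the same chain of Young/Poincaré estimates goes through verbatim, and Grönwall concludes the claim. The main technical obstacle is really the bookkeeping of constants so that the $\kappa_0$ threshold in Assumption~\ref{assump:3} is sharp enough to let the integration-by-parts proposition absorb the $|\nabla U|^2$ term cleanly; once the thresholds are matched, the argument is algebraic.
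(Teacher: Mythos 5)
Your proposal is correct and follows essentially the same route the paper takes: bound $\mathcal{R}(x,Yg)$ via Assumption~\ref{assump:3} and Young's inequality, use the $\kappa_0$ threshold to dominate the $|\nabla U|^2\Gamma^Y$ term by $\tfrac{\gamma}{4Tc^2}|\nabla U|^2\Gamma^Y$, absorb it with the integration-by-parts proposition whose $\gamma T\int(|\nabla_v Yg|^2+|\nabla_v Zg|^2)\,d\mu$ terms cancel the corresponding positive terms in the integrated form of~\eqref{eqn:gamma3eq}, and note that the proof of Corollary~\ref{cor:boundedhessian} only ever uses the integrated inequality applied to $g=P_t\bar f$. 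The constant bookkeeping matches \eqref{eqn:M22} exactly, so there is nothing to correct.
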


\begin{remark}
If $U$ satisfies Assumption~\ref{assump:1} and Assumption~\ref{assump:3}, then from \cite[Appendix]{Villani2009},  for some constant $C>0$ we have the following regularization estimate for all $t\in  (0,1]$
\begin{align}\label{reverse poincare}
\| P_t f \|_{\zeta, 1}^2 \leqslant \frac{C}{t^3} \|f\|_{L^2(\mu)}^2
\end{align}
for any $f\in L^2(\mu)$ with $\int_\mathcal{X} f \, d\mu =0$.  Combining this small-time estimate with the conclusion of  Corollary \ref{cor:VilH2} yields a convergence in $L^2(\mu)$
\begin{align}\label{convergence L2}
\| P_t f \|^2_{L^2(\mu)} \leqslant C e^{-\sigma t}  \|f\|^2_{L^2(\mu)}
\end{align}
where  $f\in L^2(\mu)$ with $\int_\mathcal{X} f \, d\mu =0$, $C>0$ is a constant and $\sigma >0$ is explicit. Indeed, the conclusion of  Corollary \ref{cor:VilH2} and then  \eqref{reverse poincare} yield
\[
\| P_{t+1} f \|_{\zeta, 1}^2 \leqslant e^{-\sigma t} \| P_1 f \|_{\zeta,1} \leqslant C e^{-\sigma t} \|f\|_{L^2(\mu)}^2.
\]
Thus, for $t \ge 1$,
\[
\| P_t f \|^2_{L^2(\mu)} \leqslant Ce^{\sigma} e^{-\sigma t}  \|f\|^2_{L^2(\mu)},
\]
and since  for $t \in [0,1]$,
\[
\| P_t f \|^2_{L^2(\mu)} \leqslant  \|f\|^2_{L^2(\mu)}
\]
one concludes \eqref{convergence L2}.
\end{remark}

\begin{example}
A typical example of a potential which does not have a bounded spectrum but satisfies Villani's condition \eqref{assump:3} is the double-well potential $U: (\RR^k)^N\rightarrow [0, \infty)$ given by $U(x)= (|x|^2-1)^2/4.$  Here, we will apply Corollary~\ref{cor:VilH2} to obtain the estimates on the convergence rate given in Example~\ref{ex:double}.

Clearly $U$ satisfies Assumption~\ref{assump:1}.  Also note that
\begin{align*}
|\nabla U(x)| = |x| | |x|^2 -1| \qquad \text{ and } \qquad \nabla^2 U (x)= (|x|^2 -1) \text{Id}_{d\times d} + A(x)
\end{align*}
where $(A(x))_{ij}= 2 x_i x_j$.  Let
\begin{align*}
\kappa_0 = \frac{\gamma }{2\sqrt{T+Tc^2(\gamma)}}
\end{align*}
where $c(\gamma)= \frac{\gamma}{2}+ \sqrt{\frac{\gamma^2}{4}+1}$ is as in~\eqref{def:cgamma}.
It then follows that for any $y\in\RR^d$
\begin{align*}
|\nabla^2 U(x) y | &\leqslant ||x^2|-1| |y| + |A(x)| |y| = (||x|^2 -1| + 2|x|^2 ) |y| \\
&\leq( 3||x|^2 -1| + 2) |y|\\
& \leqslant  \kappa_0 |\nabla U ||y| + \Big(\frac{27}{\kappa_0^2}+ 2\Big) |y|:= \kappa_0 |\nabla U||y| + \kappa_0' |y|.
\end{align*}
Thus $U$ satisfies Assumption~\ref{assump:3}.  One can check that $U$ satisfies a Poincar\'{e} inequality for some constant $\rho >0$ and that $M^2$ can be chosen as in Corollary~\ref{cor:VilH2}.
Thus we obtain the result of Corollary~\ref{cor:VilH2} for this potential with
\begin{align*}
& \zeta^2 = \frac{2+M^2}{\gamma^2 T} + \frac{\big(\frac{\gamma}{2}+ \sqrt{\frac{\gamma^2}{4}+1} \big)^2}{2T} + \frac{1}{4T},
\\
& \sigma = \frac{\gamma}{4}\min\{ 1, 1/\rho\zeta^2\}.
\end{align*}

\end{example}

Now that we see the utility in the Gamma calculus even in settings where $\nabla^2U$ does not have bounded spectrum, we next turn to the weighted setting which will allow us to deal with potentials which do not satisfy Assumption~\ref{assump:3}.

\section{The Weighted Setting and Reduction of the Main Result to The Existence of a Lyapunov function}
\label{sec:weight}
In this section, we adapt the Gamma calculus discussed in the previous section to work in settings where the Hessian $\nabla^2 U$ grows faster relative to the gradient than in the Villani-type condition outlined in Assumption~\ref{assump:3}.  The main ingredient needed to get around the issue of growth is the weight $W\in L^1(\mu) \cap C^2(\mathcal{X};[1, \infty))$ appearing in the distance $\| \cdot \|_{\zeta, W}$.  Given that the weight $W$ satisfies a certain Lyapunov structure, we can obtain explicit estimates on the exponential rate of convergence to equilibrium.  In the next section, we will see that this Lyapunov structure is not too hard to produce by exhibiting the appropriate functional, thus allowing us to conclude Theorem~\ref{thm:mainconv}.

\begin{remark}
The use of a weighted measure to establish convergence to equilibrium for Langevin dynamics in the setting of ``polynomial-like" potentials was done originally by Talay in~\cite{Talay2002}.  There, the weight is defined with respect to Lebesgue measure on the phase space and the Lyapunov condition is with respect to the formal $L^2(dxdv)$-adjoint $L^\dagger$ of $L$.  Here, by considering weights with respect to the invariant measure $\mu$, we produce a condition more reminiscent of the usual, total-variation Lyapunov condition.  Indeed, this is because the formal adjoint $L^*$ of $L$ with respect to $L^2(\mu)$ leads to the almost the same dynamics~\eqref{eqn:main}, except that the Hamiltonian part of the operator $L$ has been ``time-reversed".  See the top of p. 23 for further details.
\end{remark}

\begin{remark}
It should be noted that the choice of a Lyapunov functional in the following section may not at all be optimal.  Thus the general result given in this section provides a way to ``fine tune" the construction of the weight $W$ to produce better convergence bounds.
\end{remark}

\begin{remark}
To get around the Villani-type growth condition, a weighted measure approach was taken in the interesting recent paper~\cite{CGMZ_2017}, but with the weight $W$ appearing next to the term $|\nabla_x f|^2$.  While it is certainly possible to follow this approach to obtain exponential convergence to equilibrium in an explicitly measurable way, in~\cite{CGMZ_2017} the Villani-type growth condition is replaced by another growth condition on $U$ that is not satisfied by common potentials with singularities, such as the Lennard-Jones potential for example.  To see their condition, we refer the reader to Corollary~3 of \cite{CGMZ_2017}.  The reason why these conditions are not satisfied is that they assume the global bound, for some $\eta >0$
\begin{align}
\label{eqn:guilcond}
|\nabla U|^2 \geqslant c_1 U^{2\eta +1} \geqslant c_2 U^{2\eta} \geqslant c_3 |\nabla^2 U|
\end{align}
where the $c_i >0$ are constants.  A simple example of where this condition is violated is the one-dimensional potential $U:\RR\rightarrow (0, \infty]$ defined by
\begin{align*}
U(x)= \begin{cases}
x^a + \frac{1}{x^b}& \text{ if } x>0\\
\infty & \text{ if } x\leqslant 0
\end{cases}\end{align*}
where $a>1, b >0$ are constants.  To see why relation~\eqref{eqn:guilcond} fails, near $x=0$ one must choose $\eta =\tfrac{\beta}{2}+1$ but then the bound $|\nabla U|^2 \geqslant c_1 U^{2\eta +1}$ is no longer satisfied for $x\gg1$.  In essence, these conditions fail because there are two different scaling regimes for $U$ and its derivatives: one near $0$ and one near $\infty$.  The same reasoning can be used to show that the Lennard-Jones potential, or Coloumb potential for that matter, in general dimensions does not satisfy~\eqref{eqn:guilcond}.
\end{remark}

\begin{remark}
In another paper~\cite{BakryCattiauxGuillin2008}, the weighted approach like the one here is adopted and compared with another approaches, such as convergence in the total variation distance, but in the fully elliptic setting of gradient systems where $L$ is symmetric with respect to the $L^2(\mu)$ inner product.  The approach outlined here works in the weakly hypoelliptic setting of Langevin dynamics where $L$ no longer has this symmetry.
\end{remark}

 As before, letting $f\in C_0^\infty(\mathcal{X}; \RR)$ and $\f=f - \int_\mathcal{X} f \, d\mu$, we could start our analysis by differentiating the weighted $L^2$ functional
\begin{align*}
\int_X (P_t \f)^2 d\mu_W.
\end{align*}
However, we would wind up in the same situation in the setting where $W\equiv 1$, missing the $x$ directions in the gradient.  Thus we start from the point of checking that the time derivative of
\begin{align*}
\| P_t \f \|_{\zeta, W}^2 = \int_X (P_t \f)^2 \, d\mu_W + \int_X |\nabla_{\zeta}(P_t \f)|^2 \, d\mu
\end{align*}
does precisely what it was constructed to do.  One difference from the non weighted setting is that we will have to deal with a term that was  zero in the case when $W\equiv 1$ by invariance of $\mu$.  As we will see, a certain Lyapunov structure, which ultimately corresponds to the weight $W$, will allow us to control such a term.

As in the case when we assumed that the Hessian $\nabla^2 U$ was bounded, we will have to rescale in order to define the gradient $\nabla_{\zeta}$.  In particular, we again let
\begin{align}
\nabla_{\zeta}=\zeta^{-1} (Y, Z)
\end{align}
for some $\zeta >0$ to be determined later.  In what follows, though, we maintain the choices of the coefficients of the operators $Y$ and $Z$:
\begin{align*}
a= b=1,\qquad c=\frac{\gamma}{2}+ \sqrt{\frac{\gamma^2}{2}+1}.
\end{align*}

Recalling the notation
\begin{align*}
\mathcal{T}(g) = \Gamma^Y(g)+ \Gamma^Z(g) =  \zeta^{2}|\nabla_{\zeta}(g)|^2,
\end{align*}
we define
\begin{align}
\label{eqn:T2def}
\mathcal{T}_2(g)= \Gamma_2^Y(g)+ \Gamma_2^Z(g).
\end{align}
Let $f\in C_0^\infty(\mathcal{X}; \RR)$ with $\f=f- \int_\mathcal{X} f \, d\mu$.   Employing Corollary~\ref{cor:semireg} again and the fact that $W\in L^1(\mu)$, we have for $t>0$
\begin{align*}
& \frac{d}{dt} \|P_t \f\|_{\zeta, W}^2
\\
&= \int_\mathcal{X}  L(P_t \f)^2 -   2\gamma T \Gamma^Y(P_t \f) \, d\mu_W +  \frac{1}{\zeta^2} \int_\mathcal{X} L\mathcal{T}(P_t \f) -2 \mathcal{T}_2(P_t \f) \, d\mu\\
&= \int_\mathcal{X} L (P_t \f)^2   - 2\gamma T \Gamma^Y(P_t \f)  \, d\mu_W - \frac{1}{\zeta^2} \int_\mathcal{X} 2 \mathcal{T}_2 (P_t \f)  \, d\mu
\end{align*}
where we used the fact that $\mu$ is invariant for $(P_t)_{t\geqslant 0}$ on the last line above.
To control these terms, note that if we could show the Poincar\'{e}-type bound
\begin{align}
\label{eqn:PIlyap}
\| g\|_{\zeta,  W}^2 \leqslant \frac{1}{\sigma}\bigg[ \int_\mathcal{X} 2\gamma T \Gamma^Y(g)  - L g^2   \, d\mu_W +   \int_\mathcal{X}  \frac{2}{\zeta^2} \mathcal{T}_2(g)  d\mu \bigg]
\end{align}
for some constant $\sigma>0$ for all $g\in C^\infty_b(\mathcal{X}; \RR)$ with $\int_X g\, d\mu =0$, then we would be finished.  In particular, we have the following result.

\begin{lemma}
Suppose that there exists $W\in L^1(\mu) \cap C^2(\mathcal{X}; [1, \infty))$ and constants $\sigma, \zeta>0$ such that the estimate~\eqref{eqn:PIlyap} holds for all $g\in C^\infty_b(\mathcal{X}; \RR)$ with $\int g\, d\mu =0$.  Then for all $t\geqslant 0$, we have the bound
\begin{align}
\|P_t f \|_{\zeta, W}^2 \leqslant e^{-\sigma t} \| f\|_{\zeta, W}^2
\end{align}
for all $f\in H^1_{\zeta, W}$ with $\int_\mathcal{X} f \, d\mu =0$.
\end{lemma}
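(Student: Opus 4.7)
The strategy has two stages: (i) for smooth, compactly supported test functions, apply the hypothesized Poincar\'e-type bound \eqref{eqn:PIlyap} pointwise in $t$ to the trajectory $s \mapsto P_s \f$ in order to close a Gronwall inequality on $\|P_s \f\|_{\zeta,W}^2$; (ii) promote the resulting estimate to all centered $f \in H^1_{\zeta,W}$ by density and weak lower semicontinuity.

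For the first stage, take $f \in C_0^\infty(\mathcal{X};\RR)$ and set $\f = f - \int_\mathcal{X} f\,d\mu$, so that $\int_\mathcal{X}\f\,d\mu = 0$. Corollary~\ref{cor:semireg} guarantees that $P_t\f \in C_b^\infty(\mathcal{X};\RR)$ for every $t>0$, and invariance of $\mu$ gives $\int_\mathcal{X} P_t\f\,d\mu = 0$ for all $t\geqslant 0$. Thus $g = P_t\f$ belongs to the class on which the hypothesis is assumed. Combining the identity
\[
\frac{d}{dt}\|P_t\f\|_{\zeta,W}^2 = \int_\mathcal{X} L(P_t\f)^2 - 2\gamma T\,\Gamma^Y(P_t\f)\,d\mu_W - \frac{2}{\zeta^2}\int_\mathcal{X} \mathcal{T}_2(P_t\f)\,d\mu
\]
derived immediately before the lemma with \eqref{eqn:PIlyap} applied to $g = P_t\f$ yields the differential inequality
\[
\frac{d}{dt}\|P_t\f\|_{\zeta,W}^2 \;\leqslant\; -\sigma\,\|P_t\f\|_{\zeta,W}^2,
\]
and Gronwall's lemma then produces the claimed bound $\|P_t\f\|_{\zeta,W}^2 \leqslant e^{-\sigma t}\|\f\|_{\zeta,W}^2$ for all $t\geqslant 0$.

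For the second stage, let $f \in H^1_{\zeta,W}$ with $\int_\mathcal{X} f\,d\mu = 0$. A standard cutoff-and-mollification argument (cutting off both in the Hamiltonian $H$ and away from the possible singular set of $U$) produces $f_n \in C_0^\infty(\mathcal{X};\RR)$ with $f_n \to f$ in $\|\cdot\|_{\zeta,W}$. Since $W \geqslant 1$ and $\mu$ is a probability measure, convergence in this norm implies $L^1(\mu)$ convergence, so the centered approximants $\f_n := f_n - \int_\mathcal{X} f_n\,d\mu$ also converge to $f$ in $H^1_{\zeta,W}$. Applying stage~(i) to each $\f_n$ gives
\[
\|P_t\f_n\|_{\zeta,W}^2 \;\leqslant\; e^{-\sigma t}\,\|\f_n\|_{\zeta,W}^2.
\]
The right-hand side converges to $e^{-\sigma t}\|f\|_{\zeta,W}^2$. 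For the left-hand side, $P_t\f_n \to P_t f$ in $L^2(\mu)$ because $P_t$ is a Markovian contraction on $L^2(\mu)$ and convergence in $\|\cdot\|_{\zeta,W}$ controls the $L^2(\mu)$ norm. The uniform bound on $\|P_t\f_n\|_{\zeta,W}$ then yields weak precompactness in the Hilbert space $H^1_{\zeta,W}$; identifying the weak limit with $P_t f$ via $L^2(\mu)$-convergence and invoking weak lower semicontinuity of $\|\cdot\|_{\zeta,W}$ delivers the desired estimate for $f$.

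The main obstacle is the approximation step of stage~(ii): one must verify the density of $C_0^\infty(\mathcal{X};\RR)$ in $H^1_{\zeta,W}$ despite the possibly unbounded weight $W$ and the possible boundary behavior of $\mathcal{O}$ near singularities of $U$, and one must justify the weak-limit identification that transfers the contractive bound from the approximants to $f$. Both points are essentially technical and rely on Hilbert space weak compactness together with the fact that the $L^2(\mu)$-topology is weaker than the $\|\cdot\|_{\zeta,W}$-topology; once these are in place, the Gronwall estimate from stage~(i) passes cleanly to the limit, completing the proof.
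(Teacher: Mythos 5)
Your proposal is correct and follows essentially the same route as the paper: the lemma is exactly the observation that the time-derivative identity computed just before it, combined with \eqref{eqn:PIlyap} applied to $g=P_t\f$, closes a Gronwall inequality for smooth compactly supported data, and the statement for general centered $f\in H^1_{\zeta,W}$ then follows by the same kind of approximation/weak lower semicontinuity argument the paper invokes. The only caveat is that the density of $C_0^\infty(\mathcal{X};\RR)$ in $H^1_{\zeta,W}$ with the unbounded weight $W$ is asserted rather than proved, but this matches the level of detail of the paper's own ``simple approximation argument.''
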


In order to prove an inequality of the form~\eqref{eqn:PIlyap}, we will need to use a certain Foster-Lyapunov structure coupled with a local Poincar\'{e} inequality as in~\eqref{def:LPI}.  While the Lyapunov structure provides a means by which to control excursions far from the ``center" of space in $\mathcal{X}$, the local Poincar\'{e} inequality provides the mechanism for mixing and/or coupling when two processes started from different initial conditions return to this center.  Different from the typical Harris Theorem for Markov chains~\cite{HairerMattingly2008, MeynTweedie1993a}, however, the Lyapunov condition will look slightly strange because it will be with respect to the $L^2(\mu)$-adjoint $L^*$ of the generator $L$.  In other words, $L^*$ is defined by the rule
\begin{align*}
\int L^* f \, g \, d\mu = \int f \, Lg \, d\mu \,\, \text{ for all } \,\,  f, g \in C_0^\infty(\mathcal{X}; \RR).
\end{align*}
By a straightforward calculation, since $\mu$ has the Boltzmann-Gibbs form as in \eqref{eqn:Gibbs}, the operator $L^*$ takes the form
\begin{align}
\label{def:Lstar}
L^*= -v\cdot \nabla_x - \gamma v\cdot \nabla_v + \nabla U \cdot \nabla_v + \gamma T \Delta_v .
\end{align}
Observe that the only difference between $L^*$ and $L$ is that the Hamiltonian part
\begin{align*}
v\cdot \nabla_x-\nabla U \cdot \nabla_v
\end{align*}
of $L$ has been time-reversed in $L^*$.  At the level of the Foster-Lyapunov criteria, we will see that this means that the fundamental structures giving rise to a Lyapunov function in $L$ easily translate to the existence of a Lyapunov function for $L^*$.  Intuitively, dissipation still acts on the velocity directions $v$ through $- \gamma v \cdot \nabla_v$.  This quantity is then averaged along the deterministic Hamiltonian dynamics, but this time in the opposite direction along $L^*$.


\begin{definition}
\label{def:Lyap}
We say that a function $W\in C^2(\mathcal{X}; [1, \infty))$ is a \emph{strong Lyapunov function with respect to} $L^*$ \emph{with constants $\alpha, \beta >0$ and set $J\subseteq \mathcal{X}$} if
\begin{itemize}
\item[(i)]  $W\rightarrow \infty$ as $H\rightarrow \infty$;
\item[(ii)] $J$ is compact and connected and $W$ satisfies the global bound
\begin{align*}
L^* W \leqslant - \alpha W + \beta \mathbf{1}_{J} .
\end{align*}
\item[(iii)] For all $g\in C^\infty_b(\mathcal{X}; \RR)$ we have $Lg \, W \in L^1(\mu)$,  $g \, L^*W \in L^1(\mu)$ and
\begin{align*}
\int_\mathcal{X} L g \,  W \, d\mu = \int_\mathcal{X} g \, L^* W \, d\mu .
\end{align*}\end{itemize}
We say that a function $W\in C^2(\mathcal{X}; \RR)$ is a \emph{weak Lyapunov function with respect to} $L^*$ \emph{with constants $\alpha, \beta >0$ and set $J$} if $W$ satisfies properties (i), (ii), and (iii).
\end{definition}

\begin{remark}
Note that the only difference between a weak Lyapunov function and strong Lyapunov function is the range of $W$.  From a weak Lyapunov function, by property (i) one can easily produce a strong Lyapunov function by adding a large enough constant.
\end{remark}

We have the following simple consequence of the definition above.  Note that the result follows in a different manner once one realizes that $\mu$ is also invariant for the diffusion process with generator $L^*$.

\begin{proposition}
Suppose that $W\in C^2(\mathcal{X}; (0, \infty))$ is a weak Lyapunov function with respect to $L^*$ with constants $\alpha, \beta >0$ and set $J\subseteq \mathcal{X}$.  Then
\begin{align*}
\int_\mathcal{X} W \, d\mu \leqslant \frac{\beta}{\alpha} \mu(J).
\end{align*}
In particular, $W\in L^1(\mu)$.
\end{proposition}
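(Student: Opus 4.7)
My plan is to prove this directly from property (iii) of the weak Lyapunov function definition, applied to the constant function $g \equiv 1$. Since constants belong to $C_b^\infty(\mathcal{X};\RR)$, property (iii) is applicable and asserts in particular that $L^*W \in L^1(\mu)$ and that the integration-by-parts identity holds for $g=1$. Since $L(1)=0$, this immediately yields
\begin{equation*}
\int_\mathcal{X} L^*W \, d\mu \;=\; \int_\mathcal{X} L(1)\cdot W \, d\mu \;=\; 0.
\end{equation*}

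Next, I would rearrange the pointwise bound in property (ii),
\begin{equation*}
L^*W \leqslant -\alpha W + \beta \mathbf{1}_J,
\end{equation*}
into the form $\alpha W \leqslant -L^*W + \beta \mathbf{1}_J$. The right-hand side lies in $L^1(\mu)$: the term $L^*W$ is in $L^1(\mu)$ by property (iii) just invoked, and $\mathbf{1}_J \in L^1(\mu)$ since $\mu$ is a probability measure. Because $W>0$, the inequality can be integrated (the integral of a non-negative measurable function always makes sense in $[0,\infty]$, and the upper bound forces it to be finite), giving
\begin{equation*}
\alpha \int_\mathcal{X} W \, d\mu \;\leqslant\; -\int_\mathcal{X} L^*W \, d\mu + \beta \mu(J) \;=\; \beta\,\mu(J).
\end{equation*}
Dividing by $\alpha>0$ yields the claimed bound and shows in particular that $W\in L^1(\mu)$.

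There is essentially no technical obstacle here once one recognizes that property (iii) is formulated uniformly over \emph{all} $g \in C_b^\infty$, so that the choice $g \equiv 1$ is admissible and collapses the integration-by-parts identity to the statement that $L^*W$ has zero $\mu$-mean. If one preferred to avoid plugging in $g\equiv 1$ directly, the alternative route alluded to in the remark preceding the proposition would be to observe that $\mu$ is invariant under the (formal) diffusion semigroup generated by $L^*$, from which $\int L^*W\,d\mu=0$ also follows; one could make this rigorous by approximating $1$ with a sequence $\chi_n \in C_b^\infty$ supported on the sublevel sets $\{H\le n\}$ and verifying $\int L\chi_n \cdot W\,d\mu \to 0$, but this is unnecessary under the stated hypotheses.
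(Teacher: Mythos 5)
Your proof is correct and follows essentially the same route as the paper: plug $g\equiv 1$ into property (iii) to get $\int_\mathcal{X} L^*W \, d\mu = 0$, then integrate the pointwise bound in property (ii) and rearrange. The extra care you take about integrability of each term is fine but not needed beyond what the hypotheses already provide.
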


\begin{proof}
Applying Definition~\ref{def:Lyap} (iii) by plugging in $g=1$ we have
\begin{align*}
0 = \int_\mathcal{X} L(1) \, W \, d\mu = \int_\mathcal{X} L^*W \, d\mu.
\end{align*}
Hence, applying Definition~\ref{def:Lyap} (ii) we obtain
\begin{align*}
0\leqslant \int_\mathcal{X} -\alpha W + \beta \mathbf{1}_J \, d\mu.
\end{align*}
Rearranging the above finishes the proof.
\end{proof}

We next state and prove our main theoretical tool.  Supposing $U$ satisfies Assumption~\ref{assump:1} and Assumption~\ref{assump:2}, in essence it states that given the existence of a Lyapunov function with respect to $L^*$ with constants $\alpha, \beta >0$ and set $J$ then, provided the Lyapunov function satisfies an additional but nominal growth condition, we can arrive at the Poincar\'{e}-type inequality of the form~\eqref{eqn:PIlyap}.  As we will see later, these hypotheses are completely redundant.  That is, under Assumption~\ref{assump:1} and Assumption~\ref{assump:2}, we can always exhibit an explicit Lyapunov function with respect to $L^*$ satisfying these growth conditions.  However, we keep the statement as is, emphasizing that if one wants optimal rates of convergence, there may be room for improvement by constructing a ``better" functional.

\begin{theorem}
\label{thm:poincarelyap}
Suppose that the potential $U$ satisfies Assumption~\ref{assump:1} and Assumption~\ref{assump:2}, and that there exists a strong Lyapunov function $V$ for $L^*$ with constants $\alpha, \beta>0$ and set $J\subseteq \mathcal{X}$.  Let $\rho>0$ denote the local Poincar\'{e} constant determined by the measure $\mu$ on $J$.  Define $\rho'>0$ by
\begin{align*}
\rho' = \frac{(4 c^2(\gamma) + 4)\rho}{\gamma}
\end{align*}
where $c=c(\gamma)>0$ is as in~\eqref{eqn:cchoice}.
Suppose there exists a constant $\lambda >0 $ for which $W=V+\lambda$ and $V$ respectively satisfy
\begin{align}
\label{eqn:Wchoice}
& W(x,v) |y|^2\geqslant  (\beta \rho'+1) \left(\frac{\mathcal{R}(x, y)}{\gamma T} + \frac{|y|^2}{2T}\right),
\\\label{eqn:Vchoice}
& V(x,v)\geqslant \frac{2\beta \mu(J^c)}{\alpha \mu(J)},
\end{align}
for all $(x,v) \in \mathcal{X}, y\in \RR^d$ where $\mathcal{R}$ is as in~\eqref{eqn:Rdef}.  Then the estimate~\eqref{eqn:PIlyap} holds with this choice of $W$ and for $\zeta, \sigma >0$ given by
\begin{align*}
\zeta^2 =\frac{2}{1+ \rho' \beta} \qquad \text{ and } \qquad \sigma= \frac{\alpha}{2(1+\lambda)} \wedge  \frac{\gamma }{1+ \rho' \beta}.\end{align*}

\end{theorem}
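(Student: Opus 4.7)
The plan is to verify the Poincar\'e-type inequality \eqref{eqn:PIlyap} directly for each $g \in C^\infty_b(\mathcal{X};\mathbb{R})$ with $\int g\,d\mu = 0$, since the conclusion of the theorem is then precisely the hypothesis of the preceding lemma on exponential decay of $\|P_t f\|_{\zeta,W}^2$.

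The first step is to process each of the three pieces on the right-hand side of \eqref{eqn:PIlyap}. The adjoint identity in Definition~\ref{def:Lyap}(iii), applied to $g^2$, together with the fact that constants lie in the kernel of $L^\ast$ (so $L^\ast W = L^\ast V$), lets me rewrite
\[
-\int L g^2\,d\mu_W = -\int g^2\,L^\ast W\,d\mu \;\geq\; \alpha \int g^2 V\,d\mu - \beta\int_J g^2\,d\mu
\]
via the strong Lyapunov bound. For $(2/\zeta^2)\int \mathcal{T}_2(g)\,d\mu$, I would discard the positive $|\nabla_v Yg|^2$, $|\nabla_v Zg|^2$ contributions in \eqref{eqn:gamma3eq}, keeping only $(\gamma/\zeta^2)\Gamma^Z(g) - (2/\zeta^2)\mathcal{R}(x, Yg)$. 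The role of the choice $\zeta^2 = 2/(1+\beta\rho')$ becomes transparent here: rearranging \eqref{eqn:Wchoice} gives $(2/\zeta^2)\mathcal{R}(x,Yg) \leq \gamma T W|Yg|^2 - (\gamma/\zeta^2)|Yg|^2$, so this residue absorbs one copy of $\gamma T\int W\Gamma^Y(g)\,d\mu$ from the first piece and contributes $(\gamma/\zeta^2)\int \Gamma^Y(g)\,d\mu$ that merges with $(\gamma/\zeta^2)\int \Gamma^Z(g)\,d\mu$ into the complete gradient $\gamma\int |\nabla_\zeta g|^2\,d\mu$. Dropping the remaining nonnegative $\gamma T\int W\Gamma^Y(g)\,d\mu$, the right-hand side of \eqref{eqn:PIlyap} is bounded below by
\[
\alpha\int g^2 V\,d\mu - \beta\int_J g^2\,d\mu + \gamma \int |\nabla_\zeta g|^2\,d\mu.
\]

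Next I would deal with the negative $\beta\int_J g^2\,d\mu$. Using $\int g\,d\mu = 0$ to write $\int_J g\,d\mu = -\int_{J^c} g\,d\mu$ followed by Cauchy--Schwarz and the local Poincar\'e inequality on $J$ yields
\[
\int_J g^2\,d\mu \leq \rho \int_J |\nabla g|^2\,d\mu + \frac{\mu(J^c)}{\mu(J)}\int_{J^c} g^2\,d\mu.
\]
The elementary pointwise bound $|\nabla g|^2 \leq (2c^2+2)(\Gamma^Y(g)+\Gamma^Z(g))$, obtained from the definitions of $Y, Z$ by a single Young's inequality, combined with $\rho' = (4c^2+4)\rho/\gamma$ and $\zeta^2 = 2/(1+\beta\rho')$ converts $\beta\rho\int_J |\nabla g|^2\,d\mu$ into at most $\beta\gamma\rho'/(1+\beta\rho')\cdot\int |\nabla_\zeta g|^2\,d\mu$. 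Subtracting from $\gamma\int|\nabla_\zeta g|^2\,d\mu$ produces the net coefficient $\gamma/(1+\beta\rho')$ on the gradient part, which dictates the second term in the formula for $\sigma$.

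The final task is to match the $g^2$ terms, namely to secure
\[
(\alpha-\sigma)\int g^2 V\,d\mu \;\geq\; \sigma\lambda\int g^2\,d\mu + \frac{\beta\mu(J^c)}{\mu(J)}\int_{J^c} g^2\,d\mu,
\]
which I would verify pointwise. On $J$, $V \geq 1$ together with $\sigma(1+\lambda)\leq \alpha$ (ensured by $\sigma \leq \alpha/(2(1+\lambda))$) closes the bound. On $J^c$ one needs $(\alpha-\sigma)V \geq \sigma\lambda + \beta\mu(J^c)/\mu(J)$; this is the only genuinely delicate step of the proof, and I would handle it by a short case split on whether $\beta\mu(J^c)/\mu(J)$ is at most or exceeds $\alpha/2$, exploiting respectively the bound $V \geq 1$ or the assumption $V \geq 2\beta\mu(J^c)/(\alpha\mu(J))$ from \eqref{eqn:Vchoice}. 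It is precisely this case analysis that forces the factor of $2$ in $\sigma = \alpha/(2(1+\lambda)) \wedge \gamma/(1+\beta\rho')$; everything else in the argument is a disciplined bookkeeping of integration by parts, Young's inequality, and the tuning of $\zeta, \rho'$ to the hypotheses \eqref{eqn:Wchoice}--\eqref{eqn:Vchoice}.
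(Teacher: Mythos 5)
Your argument is correct and reaches the stated constants, but it assembles the same ingredients in a different order than the paper, so a comparison is worth recording. The paper first derives two separate integrated inequalities — one for $\int g^2\,d\mu_W$ (via \eqref{eqn:subtract}, the local Poincar\'e bound \eqref{eqn:res_bound}, and the absorption of $\beta\rho'(\mathcal{R}+\tfrac{\gamma}{2}\Gamma^Y)$ into $\gamma T\int \Gamma^Y\,d\mu_W$ using \eqref{eqn:Wchoice}) and one for $\int|\nabla_\zeta g|^2\,d\mu$ — and then adds them with a free weight $\delta$, tuned to $\delta=\gamma\zeta^2(1+\lambda)/(2\alpha)$, reading off $\sigma$ from the prefactor $\min\{1/2,\delta\}$; the hypothesis \eqref{eqn:Vchoice} is invoked once, pointwise, to absorb $\beta\tfrac{\mu(J^c)}{\mu(J)}\int g^2\,d\mu$ into $\tfrac{\alpha}{2}\int g^2\,d\mu_V$, with no case analysis. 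You instead lower-bound the right side of \eqref{eqn:PIlyap} in a single pass — deploying \eqref{eqn:Wchoice} with the factor $2/\zeta^2=\beta\rho'+1$ against the $\mathcal{R}(x,Yg)$ residue of \eqref{eqn:gamma3eq} so that a full $\gamma\int|\nabla_\zeta g|^2\,d\mu$ emerges — and then close by matching coefficients of $g^2$ pointwise on $J$ and $J^c$. I checked your final case split (on whether $\beta\mu(J^c)/\mu(J)$ exceeds $\alpha/2$) and it does close with the claimed $\sigma$; your route buys a somewhat more transparent accounting of where each hypothesis enters, at the cost of the pointwise endgame. One small remark: the case split is actually avoidable, hence your assertion that it is what ``forces'' the factor $2$ in $\sigma$ is not quite right. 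On $J^c$ one can simply write $(\alpha-\sigma)V\geqslant(\tfrac{\alpha}{2}-\sigma)V+\tfrac{\alpha}{2}V$, bound the first summand below by $\sigma\lambda$ using $V\geqslant 1$ and $\sigma\leqslant \alpha/(2(1+\lambda))$, and the second by $\beta\mu(J^c)/\mu(J)$ using \eqref{eqn:Vchoice} directly — which is in effect how the paper uses \eqref{eqn:Vchoice}; the factor $2$ is the price of reserving half of $\alpha V$ for the boundary term, in either formulation.
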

\begin{proof}[Proof of Theorem~\ref{thm:poincarelyap}]
We begin the proof of the result by making a few basic observations.  Recall that by~\eqref{eqn:gamma3eq}, we have the following integrated estimate
\begin{align}
\label{eqn:Lgammabound}
\int \mathcal{T}_2(f) \, d\mu = \int \Gamma^Y_2(f) + \Gamma^Z_2(f) \, d\mu  \geqslant  \int \frac{\gamma}{2} \Gamma^Z (f) - \mathcal{R}(x, Yf) \, d\mu
\end{align}
for any $f\in C^\infty_b(\mathcal{X}; \RR)$.
Also, if $\mu$ satisfies the local Poincar\'{e} inequality on $J$ with constant $\rho>0$, then for all $f\in H^1(\mu)$ we have
\begin{align*}
\int_J f^2 \, d\mu &\leqslant \rho \int_J |\nabla f|^2 \, d\mu +\frac{1}{\mu(J)} \big( \int_J f \, d\mu\big)^2\\
&\leqslant \rho' \int_J \frac{\gamma}{2} \mathcal{T}(f) \, d\mu + \frac{1}{\mu(J)}\big( \int_J f \, d\mu \big)^2 .
\end{align*}
Combining the previous two estimates we obtain, for any $f\in C_b^\infty(\mathcal{X}; \RR)$,
\begin{align}
 \int_J f^2 \, d\mu&\leqslant \rho' \int_\mathcal{X} \mathcal{R}(x, Yf)  + \frac{\gamma}{2} \mathcal{T}(f)-\mathcal{R}(x, Yf)  \, d\mu + \frac{1}{\mu(J)}\left( \int_J f \, d\mu\right)^2
 \notag \\
& \leqslant \rho' \int_\mathcal{X} \left(\mathcal{R}(x, Yf)+ \frac{\gamma}{2} \Gamma^Y(f) \right)\, d\mu + \rho'  \int_\mathcal{X} \mathcal{T}_2(f) \, d\mu + \frac{1}{\mu(J)}\left( \int_J f \, d\mu \right)^2. \label{eqn:res_bound}
\end{align}
Now, let $\lambda >0$ satisfy the hypotheses of the statement and recall that $W= V + \lambda\leqslant V(1+ \lambda) $ since $V\geqslant 1$.  Let $g\in C^\infty_b(\mathcal{X}; \RR)$ with $\int_\mathcal{X} g \, d\mu =0$ and note that rewriting the bound in Definition~\ref{def:Lyap} (ii) produces the following inequality
\begin{align}
\label{eqn:subtract}
 \int_\mathcal{X} g^2 V  \, d\mu \leqslant  \int_\mathcal{X} g^2 \, d\mu_W &\leqslant (1+ \lambda) \int_\mathcal{X} g^2 V \, d\mu \\
\nonumber & \leqslant \frac{1+\lambda}{\alpha}\bigg[ - \int_\mathcal{X} g^2 L^* W \, d\mu + \int_J \beta g^2 \, d\mu \bigg].
 \end{align}
Applying the inequality~\eqref{eqn:res_bound} using the fact that $\int_J g \, d\mu = \int_{J^c} g \, d\mu$,
we obtain
 \begin{align*}
&\beta\int_J g^2 \, d\mu\\
 &\leqslant \beta \rho'  \int_\mathcal{X}\Big( \mathcal{R}(x, Yg) + \frac{\gamma}{2}\Gamma^Y(g)\Big)   \, d\mu + \beta \rho' \int_\mathcal{X} \mathcal{T}_2(g) \, d\mu   + \frac{\beta}{\mu(J)}\bigg( \int_J g \, d\mu\bigg)^2\\
& \leqslant \beta \rho'  \int_\mathcal{X} \Big( \mathcal{R}(x, Yg)+ \frac{\gamma}{2}\Gamma^Y(g)\Big) \, d\mu+ \beta \rho'\int_\mathcal{X} \mathcal{T}_2(g)\, d\mu+ \beta \frac{\mu(J^c)}{\mu(J)}\int_\mathcal{X} g^2 \, d\mu  \end{align*}
where in the last inequality we applied Jensen's inequality to $(\int_{J^c} g \, d\mu )^2$.  Note by~\eqref{eqn:Wchoice} and~\eqref{eqn:Vchoice}, we then arrive at
\begin{align*}
\beta \int_J g^2 \, d\mu &\leqslant \int_\mathcal{X} \gamma T \Gamma^Y(g)  \, d\mu_W + \beta \rho'  \int_\mathcal{X} \mathcal{T}_2(g) \, d\mu + \frac{\alpha}{2} \int_\mathcal{X} g^2 \, d\mu_V.
\end{align*}
Combining this estimate with~\eqref{eqn:subtract}, we find that
\begin{align*}
\frac{1}{2}\int_\mathcal{X} g^2 \, d\mu_W \leqslant \frac{1+\lambda}{2}\int_\mathcal{X} g^2 \, d\mu_V & \leqslant \frac{1+\lambda}{\alpha} \bigg[ - \int_\mathcal{X} g^2 L^*W \, d\mu + \gamma T \int_\mathcal{X} \Gamma^Y(g) \, d\mu_W  \bigg] \\
&\qquad +\frac{(1+\lambda) \beta \rho'}{\alpha} \int_\mathcal{X} \mathcal{T}_2(g) \, d\mu .
\end{align*}
Next note that we can argue similarly as before to see that
\begin{align*}
\int_\mathcal{X} |\nabla_\zeta g|^2 \, d\mu &= \frac{2}{\zeta^2 \gamma}\int_\mathcal{X} \frac{\gamma}{2}\mathcal{T}(g) \, d\mu \leqslant \frac{2}{\zeta^2 \gamma}\bigg[\int_\mathcal{X} \gamma T  \Gamma^Y(g)  \, d\mu_W + \int_\mathcal{X}  \mathcal{T}_2(g) \, d\mu\bigg].
\end{align*}
Putting the previous two estimates together and letting $\delta >0$ be a parameter to be determined shortly, we find that
\begin{align*}
\frac{1}{2}\int_\mathcal{X} g^2 \, d\mu_W + \delta \int_\mathcal{X} |\nabla_\zeta g|^2 \, d\mu & \leqslant  \frac{1+\lambda}{\alpha}\bigg[ - \int_\mathcal{X} g^2 L^* W \, d\mu + \gamma T \int_\mathcal{X} \Gamma^Y(g) \, d\mu_W  \bigg] \\
&\qquad +\frac{(1+\lambda) \beta \rho'}{\alpha} \int_\mathcal{X} \mathcal{T}_2(g) \, d\mu  \\
&\qquad + \frac{2 \delta}{\zeta^2 \gamma} \bigg[ \int_\mathcal{X} \gamma T \Gamma^Y(g) \, d\mu_W +  \int_\mathcal{X} \mathcal{T}_2(g) \, d\mu \bigg].
\end{align*}
Picking
\begin{align*}
\delta = \frac{\gamma \zeta^2(1+\lambda)}{2\alpha} \qquad \text{ and } \qquad \zeta^2 = \frac{2}{1+ \beta \rho'},
\end{align*}
we arrive at the estimate
\begin{align*}
&\frac{1}{2}\int_\mathcal{X} g^2 \, d\mu_W + \delta \int_\mathcal{X} |\nabla_\zeta g|^2 \, d\mu \\
& \leqslant \frac{1+\lambda}{\alpha}\bigg[ - \int_\mathcal{X} g^2 L^* W \, d\mu + 2 \int_\mathcal{X} \gamma T \Gamma^Y(g) \, d\mu_W \bigg]  + \frac{1+\lambda}{\alpha} \int_\mathcal{X} \frac{2}{\zeta^2} \mathcal{T}_2(g) \, d\mu.
 \end{align*}
 The result now follows by bounding the quantity on the left-hand side below by $\min\{ 1/2, \delta \} \|g\|_{\zeta, W}^2$.
\end{proof}

\section{The existence and consequences of quantitative Lyapunov functionals}
\label{sec:LFs}

In this section, we prove that if the potential $U$ satisfies Assumptions~\ref{assump:1} and Assumption~\ref{assump:2}, then there exists a Lyapunov function $V$ for $L^*$ whose constants $\alpha, \beta>0$ and set $K$ can be explicitly estimated.  Once we establish the existence of such a function $V$, we will, with an additional auxiliary estimate, plug it into Theorem~\ref{thm:poincarelyap} to conclude the main convergence result, namely Theorem~\ref{thm:mainconv}.

In order to construct the desired Lyapunov function, we follow the approach in \cite{HerzogMattingly2017} by exponentiating the Hamiltonian with the appropriately chosen correction $\psi$.  Recall that we need to find a Lyapunov function with respect to $L^*$ as in~\eqref{def:Lstar} as opposed to $L$.  This difference, however, only results in placing a minus sign in front of the correction.

\begin{theorem}
\label{thm:LFquant}
Suppose that $U$ satisfies Assumption~\ref{assump:1} and Assumption~\ref{assump:2}.  Consider the constants $R_2>R_1>0$ given in~\eqref{eqn:constr1r2}, let $b\in (0,\frac{1}{2Td}]$ and define $h\in C^\infty([0,\infty); [0,1])$ to be any function satisfying
\begin{align*}
h(q)= \begin{cases}
1 & \text{ if } q\geqslant R_2 \\
0 & \text{ if } q\leqslant R_1
\end{cases}
\,\,\, \text{ and } \,\,\,
|h'| \leqslant \frac{2}{R_2 - R_1}= \frac{1}{16 Td }.
\end{align*}
Let $\psi \in C^\infty(\mathcal{X}; \RR)$ be given by
\begin{align*}
\psi(x,v) =\begin{cases}
 \displaystyle{-\frac{3}{2}\gamma b T  d\,    \frac{ h(U(x))v\cdot \nabla U(x)}{|\nabla U(x)|^2} }& \text{ if } U(x) \geqslant R_1 \\
 0 & \text{ otherwise}
 \end{cases}.
\end{align*}
Then
\begin{align}
\label{eqn:Vlyap1}
V(x,v) = \exp\bigg(b H(x,v) + \psi(x,v)\bigg)
\end{align}
is a weak Lyapunov function corresponding to $L^*$ with constants
\begin{align*}
\alpha = \frac{\gamma b T d}{4}, \qquad \beta = \frac{5 \gamma b T d}{4} e^{ b R_2 + 5 bT d}
\end{align*}
and  set
\begin{align}
\label{eqn:compact_set_def}
K= \{ |v|^2 \leqslant (20e^4+2) Td \} \cap \{ U \leqslant R_2 \}.
\end{align}
Moreover, $V\geqslant e^{-1}$ so that $\tilde{V}:=e^{1} V$ is a strong Lyapunov function corresponding to $L^*$ with constants $\alpha, e^{1} \beta$ and set $K$ as above.

\end{theorem}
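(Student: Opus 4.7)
My plan is to obtain Definition~\ref{def:Lyap}~(ii) by a direct pointwise computation of $L^*V/V$, and to verify the integrability properties (i) and (iii) by separate arguments. Since $V = e^F$ with $F = bH + \psi$ and $L^*$ has the form ``drift $+ \gamma T\Delta_v$'', the exponential identity gives
\begin{align*}
\frac{L^* V}{V} = L^* F + \gamma T|\nabla_v F|^2.
\end{align*}
Computing the Hamiltonian piece yields $L^*(bH) + \gamma T b^2|v|^2 = -\gamma b(1-Tb)|v|^2 + \gamma b T d$, which is dissipative in $v$ but carries no killing in $x$. The role of $\psi$ is to inject this missing $x$-killing: from $\nabla U \cdot (\nabla U/|\nabla U|^2) = 1$ one obtains on $\{U \geq R_2\}$, where $h\equiv 1$,
\begin{align*}
\nabla U \cdot \nabla_v \psi = -\tfrac{3}{2}\gamma b T d,
\end{align*}
a constant $x$-killing contribution.

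The bulk of the work is to show that the remaining terms in $L^*V/V$ are dominated by this gain. The most delicate term is $-v\cdot\nabla_x\psi$, which via the Jacobian of $\nabla U/|\nabla U|^2$ is bounded by $\tfrac{3}{2}\gamma b T d \,|v|^2\, |\nabla^2U|/|\nabla U|^2$. This is exactly where Assumption~\ref{assump:2} is tuned: the constant $1/(16Td)$ in~\eqref{eqn:gbound1} yields $|\nabla^2U|/|\nabla U|^2 \leq 1/(16Td) + \kappa''/|\nabla U|^2$, and the lower bound on $|\nabla U|^2$ in~\eqref{eqn:gbound2} makes the residual $\kappa''/|\nabla U|^2$ negligible on $\{U \geq R_1\}$, provided $R_1$ is as large as in~\eqref{eqn:constr1r2}; the prefactors $40e^4+4$ and $92$ appearing there are chosen precisely to absorb the remaining errors. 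The other pieces — the term $(-\gamma + 2\gamma Tb)\psi$ coming from $-\gamma v\cdot\nabla_v\psi$ together with the cross term in $|\nabla_v F|^2$, and the quadratic $\gamma T|\nabla_v\psi|^2 \sim (\gamma bTd)^2/|\nabla U|^2$ — are handled by the same mechanism via Young's inequality. Patching three regimes then closes the estimate: on $\{U \geq R_2\}$ the killing $-\tfrac{3}{2}\gamma bTd$ dominates; on the transition band $\{R_1 \leq U \leq R_2\}$ the bound $|h'| \leq 1/(16Td)$ is the quantitative ingredient that controls the chain-rule contribution in $\nabla_x\psi$; on $\{U \leq R_1\}$ one has $\psi\equiv 0$ and the Hamiltonian piece alone gives $L^*V/V \leq -\alpha$ once $|v|^2 \geq (20e^4+2)Td$. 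Together these yield $L^*V \leq -\alpha V$ off $K$, and on $K$ a crude upper bound $V \leq e^{bR_2 + 5bTd}$ (from $|\psi| \leq \tfrac{3}{2}\gamma bTd\,|v|/|\nabla U|$ and the definition of $K$) yields the stated $\beta$.

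Property (i) — together with the lower bound $V \geq e^{-1}$ — follows from a Young-type estimate $|\psi| \leq \tfrac{b}{4}|v|^2 + C/|\nabla U|^2$; combined with the choice of $R_1$ this gives $bH + \psi \geq -1$ everywhere and $bH + \psi \to \infty$ as $H \to \infty$. Property (iii) is a routine integration-by-parts argument via cutoffs supported in $\mathcal{O}_n \times B_n$: boundary contributions vanish because $V$ grows at most like $e^{bH}$ while the density of $\mu$ decays like $e^{-H/T}$ with $bT < 1$. The main obstacle is the careful error bookkeeping on the transition band $\{R_1 \leq U \leq R_2\}$, where both the chain rule for $h(U)$ and the smallness of $\kappa''/|\nabla U|^2$ must be invoked simultaneously; this is precisely what dictates the elaborate but explicit form of $R_1$ in~\eqref{eqn:constr1r2}.
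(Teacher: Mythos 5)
Your overall strategy is the same as the paper's: compute $L^*V/V$ via the exponential identity, use the $\nabla U\cdot\nabla_v$ part of $L^*$ acting on $\psi$ to produce the constant killing term $-\tfrac{3}{2}\gamma b T d\, h(U)$, control $-v\cdot\nabla_x\psi$ through~\eqref{eqn:gbound1}, the bound on $|h'|$, and the lower bound on $|\nabla U|^2$ on $\{U\geqslant R_1\}$ forced by the choice of $R_1$, and absorb $(2bT-1)\gamma\psi$ and $\gamma T|\nabla_v\psi|^2$ by Young's inequality; your treatment of properties (i) and (iii) and of the lower bound $V\geqslant e^{-1}$ is also consistent with (indeed slightly more complete than) the paper. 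Up to the final bookkeeping the sketch is sound.

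The genuine gap is in that last step, where you assert ``$L^*V\leqslant -\alpha V$ off $K$'' and then claim the crude bound $V\leqslant e^{bR_2+5bTd}$ \emph{on} $K$ to produce the stated $\beta$. That bound is false on $K$: points of $K$ may have $|v|^2$ as large as $(20e^4+2)Td$, so $bH$ alone can reach $bR_2+(10e^4+1)bTd$, and no estimate on $\psi$ recovers the exponent $5bTd$. The correct accounting (and the paper's) is to observe that the unified estimate $L^*V/V\leqslant -\tfrac{3\gamma b}{16}|v|^2-\tfrac{3}{2}\gamma bTd\,h(U)+\tfrac{5}{4}\gamma bTd$ gives the drift inequality $L^*V/V\leqslant -\tfrac{\gamma bTd}{4}$ already whenever $|v|^2\geqslant 8Td$ \emph{or} $U\geqslant R_2$; hence the exceptional set is $K'=\{|v|^2\leqslant 8Td\}\cap\{U\leqslant R_2\}\subseteq K$, and it is on $K'$ (where $b|v|^2/2\leqslant 4bTd$ and $|\psi|\leqslant bTd$, the latter again by the size of $|\nabla U|$ on $\{U\geqslant R_1\}$) that $V\leqslant e^{bR_2+5bTd}$; one then replaces $\mathbf{1}_{K'}$ by $\mathbf{1}_{K}\geqslant \mathbf{1}_{K'}$ for free. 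The enlarged velocity threshold $(20e^4+2)Td$ in the definition of $K$ serves only the later measure estimate on $\mu(K^c)$ in Proposition~\ref{prop:quantmu}; it is not the region on which the drift estimate is allowed to fail. As written, your argument yields only a constant of order $e^{bR_2+(10e^4+1)bTd}$ in place of $e^{bR_2+5bTd}$, which is not the stated $\beta$ and would destroy the step $\tfrac{2\beta}{\alpha}\tfrac{\mu(K^c)}{\mu(K)}\leqslant 1$ (with $b=1/R_2$) used in the proof of Theorem~\ref{thm:mainconv} and Corollary~\ref{cor:mainexpl}.
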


\begin{remark}
Note that the definition of the constants $\alpha, \beta >0$ in the result above suggests choosing $b\propto 1/Td$.  Thus in terms of the dimension $d$, the important part will be to determine the precise dependence of $R_2$ on $d$.  We already know that $R_2$ is at least on the same order as $Td$ since $R_2=R_1 + 32 Td$.
\end{remark}

%

\begin{proof}[Proof of Theorem~\ref{thm:LFquant}]
We recall that by the choice of $R_2$, $K$ is both compact and connected (cf. Remark~\ref{rem:connected}).
Let $b\in (0,\frac1{2Td}]$ be as in the statement of the result and let $\delta >0$ be a constant which we will choose momentarily.  Consider a candidate Lyapunov functional $V_{b, \delta}: \mathcal{X}\rightarrow (0, \infty)$ defined by
\begin{align}
V_{b, \delta}(x,v) = \exp( b H(x,v) + \psi_\delta(x,v))
\end{align}
where $\psi_\delta \in C^\infty(\mathcal{X} ; [0, \infty))$ is given by
\begin{align*}
\psi_\delta(x,v) =\begin{cases}
 \displaystyle{-\delta b  \, h(U(x))    \frac{v\cdot \nabla U(x)}{|\nabla U(x)|^2} }& \text{ if } U(x) \geqslant R_1 \\
 0 & \text{ otherwise}
 \end{cases}.
\end{align*}
By definition, $V_{b, \delta}\in C^{\infty}(\mathcal{X}; (0, \infty))$.  We will see at the end of the proof that, in fact, with the right choice of $\delta$ and $R_1$, $V \geqslant e^{-1}$.  Note that, as $H(x,v) \rightarrow \infty$ with $(x,v) \in \mathcal{X}$
\begin{align*}
V_{b, \delta} (x,v) = \exp\big(b H(x,v)(1+ o(1))\big)\rightarrow \infty.
\end{align*}
Next, note that for $(x,v) \in \mathcal{X}$
\begin{align}
\label{eqn:m1}\frac{L^* V_{b, \delta}(x,v)}{V_{b, \delta}(x,v)}& = - b\gamma(1-b T) |v|^2 - \delta b h(U(x)) - v \cdot \nabla_x \psi_\delta(x,v) \\
\nonumber & \qquad + (2b T-1)  \gamma \psi_\delta(x,v)+ \frac{\delta^2 b^2 \gamma Th^2(U(x)) }{|\nabla U(x)|^2}+ \gamma  b T d.
\end{align}
Note that at the very least, we need to pick $\delta > \gamma T d$ so that whenever $|v|$ is bounded and $U$ is large, the $-\delta h (U(x))$ term beats the constant $\gamma b T d$ for $U$ large.  Thus we pick $\delta = \frac{3}{2} \gamma T d$ to arrive at our chosen $V_{b}$ as in the statement.  To estimate each of the terms on the righthand side of the equation above, first observe that
\begin{align}
\nonumber  &- v \cdot \nabla_x \psi(x,v) \\
\nonumber &=  - \delta b h(U(x)) \sum_{i=1}^{d} v_i v \cdot  \partial_{x_i}\bigg( \frac{\nabla U(x)}{|\nabla U(x)|^2} \bigg) -  \delta b \sum_{i=1}^{d} v_i v \cdot  h'(U(x)) \partial_{x_i} U(x)  \frac{\nabla U(x)}{|\nabla U(x)|^2} \\
\nonumber & \leqslant \delta b  h(U(x)) \frac{|\nabla^2 U v| }{|\nabla U|^2} |v| + \delta b  |h'(U(x))| |v|^2 .
\end{align}
Hence we see that by the choices of $\delta =\frac{3}{2} \gamma T d $ and $R_1$
\begin{align*}
- v \cdot \nabla_x \psi(x,v) &\leqslant \frac{\delta b h(U(x))}{16 T d}|v|^2 + \frac{\delta b \kappa'' h(U(x))}{|\nabla U|^2} |v|^2 + \delta b \frac{|v|^2}{16 T d}  \leqslant \frac{\gamma b}{4}  | v|^2.
\end{align*}
Next, observe that by Young's inequality and the choices of $R_1>0$ and $b\in (0, 1/2T]$ we have
\begin{align*}
\gamma |2b T-1| |\psi| + \frac{\delta^2 b^2 \gamma T h^2(U)}{|\nabla U|^2} &\leqslant \frac{4 \gamma b \delta^2 h^2(U)+ \delta^2 b^2 \gamma T h^2(U) }{|\nabla U|^2} + \frac{\gamma b }{16}|v|^2\\
& \leqslant \frac{\gamma b Td}{4} + \frac{\gamma b}{16}|v|^2.
\end{align*}
Putting this into~\eqref{eqn:m1} yields the following bound
\begin{align*}
\frac{ L^* V(x,v)}{V(x,v)}& \leqslant - \frac{3b\gamma}{16} |v|^2 - \frac{3}{2}\gamma b T d   h(U(x)) + \frac{5}{4}\gamma b T d.
\end{align*}
Thus if either $|v|^2 \geqslant 8  T d$ or $U \geqslant R_2$, we arrive at the bound
\begin{align*}
\frac{L^* V(x,v)}{V(x,v)}\leqslant - \frac{\gamma b Td}{4}.
\end{align*}
Consequently, we obtain the estimate
\begin{align*}
L^* V \leqslant - \frac{\gamma b T d}{4} V + \beta' 1_{K'} \leqslant  - \frac{\gamma b T d}{4} V + \beta' 1_{K}
\end{align*}
where $K'= \{|v|^2 \leqslant 8 Td \}\cap \{ U \leqslant R_2\}$, $K$ is as in the statement of the result and $\beta' = \frac{5 \gamma b T d}{4}\max_{K'} V$.  To estimate $\beta'>0$, note that by using the choice of $R_1>0$, on $K'$ we have
\begin{align*}
|\psi_\delta| \leqslant b Td  .
\end{align*}
Hence
\begin{align*}
\beta' \leqslant \frac{5 \gamma b Td }{4} \max_{K'} \exp\bigg(\frac{b|v|^2}{2} + bU(x) + bTd \bigg)\leqslant \beta
\end{align*}
where $\beta >0$ is as in the statement of the result.  Moreover, note that, globally,
\begin{align*}
|\psi_\delta| \leqslant \frac{b \delta^2 h^2(U)}{2|\nabla U|^2} + \frac{b}{2}|v|^2.
\end{align*}
Next, note that since $b\in (0, 1/(2Td)]$
\begin{align*}
V\geqslant \exp\bigg( b U h^2(U) - \frac{b\delta^2h^2(U)}{2|\nabla U |^2}\bigg)&\geqslant \exp(b U h^2(U) - \frac{bTd}{36}h^2(U)\bigg) \geqslant e^{-1}
\end{align*}
This finishes the proof.
\end{proof}

We next combine the previous result with Theorem~\ref{thm:poincarelyap} to conclude the main general result, Theorem~\ref{thm:mainconv} with the explicit constants in the result as claimed in Corollary~\ref{cor:mainexpl}.

\begin{proof}[Proof of Theorem~\ref{thm:mainconv} and Corollary~\ref{cor:mainexpl}]
Let $\tilde{V}$ be as in the statement of Theorem~\ref{thm:LFquant}.  We recall that $\tilde{V}=e^1 V\geqslant 1$ is a strong Lyapunov function with respect to $L^*$ with constants
\begin{align*}
 \alpha = \frac{\gamma b T d}{4}, \qquad \beta = \frac{5\gamma b Td}{4}e^{bR_2+5b Td +1}
\end{align*}
and set
\begin{align*}
    K= \{ |v| \leqslant (20e^4+2) Td \} \cap \{ U \leqslant R_2\}.
\end{align*}
Also recall that $b>0$ is any constant in the interval $(0, 1/(2Td)].$  Here, however, we pick $b= \frac{1}{R_2}.$  Since $R_2=R_1 + 32 Td$, we note that this choice of $b$ is clearly in the permitted range $(0, 1/(2Td)]$.

To be able to apply Theorem~\ref{thm:poincarelyap},  we have left to check that we can pick $\lambda \geqslant 1$ so that $W=\tilde{V}+ \lambda$ and $\tilde{V}$ respectively satisfy
\begin{align}
& W(x,v) |y|^2 \geqslant (\beta \rho' +1) \bigg(\frac{\mathcal{R}(x,y)}{\gamma T} + \frac{|y|^2}{2T} \bigg), \label{eqn:pairineq}
    \\
& \tilde{V}(x,v) \geqslant \frac{2\beta}{\alpha} \frac{\mu(K^c)}{\mu(K)}, \notag
\end{align}
where $\mathcal{R}$ is as in relation~\eqref{eqn:Rdef}. We first take care of the second estimate on the right hand side above and then after finish off the first.

By Theorem~\ref{prop:quantmu}, an auxiliary estimate in the Appendix, and the values of $\alpha, \beta>0$ and $b=1/R_2$ above, we have that
\begin{align*}
\frac{2 \beta}{\alpha} \frac{\mu(K^c)}{\mu(K)} \leqslant 10 e^{3}\frac{\mu(K^c)}{\mu(K)}\leqslant 1 \leqslant \tilde{V} .
\end{align*}
Next observe that by Assumption~\ref{assump:2}, recalling that $\kappa' =1/(16Td)$ we find that
\begin{align*}
    \frac{\mathcal{R}(x,y)}{\gamma T} + \frac{|y|^2}{2T}&=  \frac{2|y|^2}{\gamma^2 T} + \frac{|\nabla^2 U(x)y|^2}{2\gamma^2 T}+ \frac{|y|^2}{2T}\\
    &\leqslant \bigg(\frac{(\kappa')^2 |\nabla U|^4+  (\kappa'')^2 +2}{\gamma^2 T}+ \frac{1}{2T} \bigg)|y|^2\\
    &\leqslant  \bigg( \frac{2 (c_0 \kappa')^2 U^{4+\frac{4}{\eta_0}}+ 2(d_0 \kappa')^2+ (\kappa'')^2 +2}{\gamma^2 T}+ \frac{1}{2T}\bigg)|y|^2\\
    &:=D(U(x)) |y|^2.
\end{align*}
Hence the inequality
\begin{align*}
    \tilde{V}(x,v)|y|^2 \geqslant (\beta \rho'+1) \bigg( \frac{\mathcal{R}(x,y)}{\gamma T}+ \frac{|y|^2}{2T}\bigg),
\end{align*}
is satisfied provided
\begin{align}
\label{eqn:Uineq}
b H(x,v)+ \psi(x,v) \geqslant \log(D(U(x)))+ \log(\beta \rho'+1) -1.
\end{align}
Recalling that
\begin{align*}
    |\psi| \leqslant \frac{b \delta^2 h^2(U)}{2|\nabla U|^2} + \frac{b}{2}|v|^2 \leqslant \frac{b}{2}|v|^2 + \frac{b Td }{36}\leqslant \frac{b}{2}|v|^2 + 1,
\end{align*}
we observe that the inequality~\eqref{eqn:Uineq} is satisfied if
\begin{align*}
    U(x) \geqslant R_2 \log(D(U(x))) + R_2 \log(\beta \rho' +1).
\end{align*}
 Pick $\lambda_0 >0$ such that $a\geqslant \lambda_0$ implies
  \begin{align*}
      a \geqslant R_2 \log(D(a)) + R_2 \log(\beta \rho'+1),
  \end{align*}
 Thus on the set $\{ U\geqslant \lambda_0 \}$, the desired inequality is satisfied.  On the compliment $\{U \leqslant \lambda_0 \}$, we have that
 \begin{align*}
     (\beta \rho' + 1) D(U(x)) \leqslant (\beta \rho' + 1) D(\lambda_0).
 \end{align*}
 Thus picking
 \begin{align*}
      \lambda \geqslant (\beta \rho' + 1) D(\lambda_0)
 \end{align*}
 ensures that the estimate~\eqref{eqn:pairineq} for $W= \tilde{V}+ \lambda$ is satisfied.  This then gives the claimed results.
 \end{proof}

\section*{Appendix}

Here we provide details behind some of the more technical estimates in the paper.

\subsection*{Quantitative inequalities for $\mu$}
Recall that $\mu$ denotes the product Gibbs measure~\eqref{eqn:Gibbs}, and let $\mu_1$ and $\mu_2$ denote the marginal measures given by
\begin{align*}
\mu_1(A) = \int_A \int_\mathcal{O}   \frac{1}{\mathcal{N}} e^{-\frac{1}{T} H(x,v)} \,dx \, dv \,\,\, \,\, \text{ and } \,\,\,\,\, \mu_2(B) = \int_B \int_{(\RR^k)^N}  \frac{1}{\mathcal{N}} e^{-\frac{1}{T} H(x,v)} \,dv \, dx
\end{align*}
defined for Borel subsets $A\subseteq (\RR^k)^N$ and $B\subseteq \mathcal{O}$.

\begin{proposition}
\label{prop:quantmu}
Suppose that $U$ satisfies Assumption~\ref{assump:1} and Assumption~\ref{assump:2} and recall the constant $R_2>0$ defined in the statement of Theorem~\ref{thm:LFquant}.  Then we have
\begin{align*}
\int_\mathcal{O} |\nabla U|^2 \, d\mu_2 \leqslant \frac{ \kappa'' T \sqrt{d}}{1-\frac{1}{16 \sqrt{d}}}\qquad \text{ and } \qquad \mu_2(\{ x \in \mathcal{O} \, : \, U(x) \geqslant R_2 \}) \leqslant  \frac{1}{2(10e^4+1)}.
\end{align*}
Moreover, if $K\subseteq \mathcal{X}$ denotes the set in Theorem~\ref{thm:LFquant}, then
\begin{align}
\mu(K^c) \leqslant \frac{1}{10e^4 +1}.
\end{align}

\end{proposition}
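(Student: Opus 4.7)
The plan is to establish the three inequalities in sequence, each reducing to an integration-by-parts identity or an elementary Markov-type tail bound against $\mu$ or its marginals.

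For the first bound on $\int_\mathcal{O} |\nabla U|^2 \, d\mu_2$, I would apply the standard integration-by-parts identity
\begin{equation*}
\int_\mathcal{O} |\nabla U|^2 e^{-U/T} \, dx = -T \int_\mathcal{O} \nabla U \cdot \nabla \bigl(e^{-U/T}\bigr) \, dx = T \int_\mathcal{O} \Delta U \, e^{-U/T} \, dx,
\end{equation*}
which, after normalizing by $\mathcal{N}_1$, yields $\int |\nabla U|^2 \, d\mu_2 = T \int \Delta U \, d\mu_2$. I would then use Assumption~\ref{assump:2} together with the Cauchy--Schwarz bound $|\Delta U| = |\mathrm{tr}(\nabla^2 U)| \leq \sqrt{d}\,\|\nabla^2 U\|_{\mathrm{HS}}$, combined with the coordinate-wise estimate $|\nabla^2 U e_i| \leq \kappa' |\nabla U|^2 + \kappa''$ (with $\kappa' = 1/(16Td)$), to produce a pointwise bound on $|\Delta U|$ whose leading $|\nabla U|^2$ coefficient is small thanks to the threshold in~\eqref{eqn:gbound1}. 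Substituting this pointwise estimate into the identity above and absorbing the $|\nabla U|^2$ term from the right-hand side into the left-hand side delivers the stated inequality after elementary rearrangement.

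For the second bound on $\mu_2(\{U \geq R_2\})$, I would combine Markov's inequality with the lower bound in~\eqref{eqn:gbound2}, namely $c_\infty U^{2 - 2/\eta_\infty} \leq |\nabla U|^2 + d_\infty$:
\begin{equation*}
\mu_2(\{U \geq R_2\}) \leq \frac{\int U^{2 - 2/\eta_\infty} \, d\mu_2}{R_2^{2 - 2/\eta_\infty}} \leq \frac{\int |\nabla U|^2 \, d\mu_2 + d_\infty}{c_\infty R_2^{2 - 2/\eta_\infty}}.
\end{equation*}
The numerator is then controlled by the first inequality, while the denominator is bounded below by invoking the definition of $R_1$ in~\eqref{eqn:constr1r2} and using $R_2 \geq R_1$: one has $c_\infty R_1^{2 - 2/\eta_\infty} \geq d_\infty + (40e^4 + 4) Td (\kappa''+1)$. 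A direct numerical comparison (the numerator is of order $Td\kappa'' + d_\infty$ while the denominator carries a factor of at least $(40e^4 + 4) Td \kappa''$) shows that the ratio is bounded by $1/[2(10e^4 + 1)]$, as claimed.

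For the third bound on $\mu(K^c)$, I would use a simple union bound
\begin{equation*}
\mu(K^c) \leq \mu\!\left(\{|v|^2 > (20e^4 + 2)Td\}\right) + \mu_2(\{U \geq R_2\}).
\end{equation*}
The second summand is already controlled by the previous step. For the first, note that under $\mu$ the velocity $v$ is a centered Gaussian on $\mathbf{R}^d$ with covariance $T \cdot \mathrm{Id}_{d\times d}$, independent of $x$, so $\mathbf{E}_\mu|v|^2 = Td$ and Markov's inequality immediately gives $\mu(|v|^2 > (20e^4 + 2)Td) \leq 1/(20e^4 + 2) = 1/[2(10e^4 + 1)]$. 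Adding the two tail bounds yields the target $\mu(K^c) \leq 1/(10e^4 + 1)$. The main obstacle is not conceptual but rather one of careful bookkeeping in the second step: one must verify that the two arguments in the maximum appearing in~\eqref{eqn:constr1r2} were chosen large enough (with the correct constants in front of $Td\kappa''$ and $\gamma^2 Td$) so that Markov's inequality on $U^{2-2/\eta_\infty}$ drives the probability below the prescribed threshold. Once this is tracked correctly, nothing deeper than integration by parts and Markov's inequality is needed.
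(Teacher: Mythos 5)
Your third step is fine (and in fact slightly more direct than the paper's, which re-derives the Gaussian velocity moment via an auxiliary Ornstein--Uhlenbeck process), but there is a genuine quantitative gap in your second step. Applying Markov's inequality to $U^{2-2/\eta_\infty}$ and then using the lower bound of \eqref{eqn:gbound2} \emph{in expectation} gives $\mu_2(U\geqslant R_2)\leqslant \big(\int|\nabla U|^2\,d\mu_2+d_\infty\big)/\big(c_\infty R_2^{2-2/\eta_\infty}\big)$, with $d_\infty$ in the numerator. The definition of $R_1$ in \eqref{eqn:constr1r2} only guarantees $c_\infty R_2^{2-2/\eta_\infty}\geqslant d_\infty+(40e^4+4)Td(\kappa''+1)\vee 92\gamma^2 Td$, so your ratio is only controlled by roughly $\big(\tfrac{16}{15}\kappa''T\sqrt d+d_\infty\big)/\big(d_\infty+(40e^4+4)Td(\kappa''+1)\big)$, which approaches $1$ when $d_\infty\gg Td(\kappa''+1)$. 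Nothing in Assumption~\ref{assump:2} rules this out, and in the singular example of Proposition~\ref{prop:singularconst} $d_\infty$ grows polynomially in $N$, so the ``direct numerical comparison'' you invoke fails precisely in the regime the proposition is meant to cover. The repair is the paper's route: use the pointwise inclusion of events, namely that on $\{U\geqslant R_2\}$ one has $|\nabla U|^2\geqslant c_\infty R_2^{2-2/\eta_\infty}-d_\infty$, and then apply Markov to $|\nabla U|^2$, giving $\mu_2(U\geqslant R_2)\leqslant \int|\nabla U|^2\,d\mu_2\,\big/\big(c_\infty R_2^{2-2/\eta_\infty}-d_\infty\big)$. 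With $d_\infty$ subtracted in the denominator it is exactly compensated by the additive $d_\infty/c_\infty$ built into $R_1$, and the stated bound $\tfrac{1}{2(10e^4+1)}$ follows.

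Your first step also needs more care than ``elementary rearrangement.'' The global integration by parts plus absorption is not justified as written: $\OO$ is in general a proper, unbounded domain with $U$ singular at its boundary, and the finiteness of $\int|\nabla U|^2\,d\mu_2$ --- which the absorption requires --- is precisely what is being proved, so an $\infty\leqslant\infty$ rearrangement must be excluded. This is fixable by exhaustion: apply the divergence theorem to $e^{-U/T}\nabla U$ on $\OO_n=\{U<n\}$, note that the boundary flux $\int_{\partial\OO_n}e^{-U/T}\,\partial_\nu U\,dS$ is nonnegative (the outward normal is $\nabla U/|\nabla U|$) and can be dropped, absorb on $\OO_n$ where all integrals are finite, and let $n\to\infty$ by monotone convergence; this is the analytic counterpart of the paper's localization, which runs the auxiliary gradient diffusion with invariant measure $\mu_2$, stops it upon exiting $\{U\leqslant n\}$, and uses Fatou together with Ces\`aro averaging. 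Finally, note that your chain $|\Delta U|\leqslant\sqrt d\,\|\nabla^2U\|_{\mathrm{HS}}$ combined with the operator-norm bound \eqref{eqn:gbound1} yields $|\Delta U|\leqslant d\big(\tfrac{1}{16Td}|\nabla U|^2+\kappa''\big)$, hence after absorption a constant of order $\kappa''Td$ rather than the stated $\kappa''T\sqrt d/(1-\tfrac{1}{16\sqrt d})$; this weaker constant is still enough for the second and third inequalities once the second step is repaired as above, since $R_1$ carries the full factor $(40e^4+4)Td(\kappa''+1)$.
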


\begin{proof}
Suppose that $U:(\RR^k)^N\rightarrow [0, \infty]$ satisfies Assumption~\ref{assump:1} and Assumption~\ref{assump:2}.  Consider the following gradient system on $\mathcal{O}$
\begin{align}
\label{eqn:gradsys}
dX(t) = -\nabla U(X(t)) \, dt + \sqrt{2 T} \, dW(t)
\end{align}
where $W(t)$ is a standard, $Nk$-dimensional Brownian motion on $(\Omega, \mathcal{F}, \PP)$.  Under the assumptions on the potential $U$, it is not hard to show that, like equation~\ref{eqn:main}, equation~\eqref{eqn:gradsys} has unique pathwise solutions on the state space $\mathcal{O}$ for all finite times $t\geqslant 0$.  Moreover, $\mu_2$
is the unique invariant probability measure for the Markov process $X(t)$.  This can be seen by using $U$ itself as a Lyapunov functional employing the hypotheses of the statement.  For $n\in \N$, let $\xi_n$ be the first exit time of the process~\eqref{eqn:gradsys} from the set $\{ U \leqslant n \}$ and observe that
\begin{align*}
\EE_x U(X(t\wedge \xi_n))&= U(x) + \EE_x \int_0^{t\wedge \xi_n} -|\nabla U(X(s))|^2 +  T \Delta U(X(s)) \, ds .
\end{align*}
Next, to bound the quantity above, note that $|\Delta U | \leqslant  \sqrt{d} \sup_{|y|\leq 1} |\nabla^2 U y|$.  Hence applying Assumption~\ref{assump:2} gives
\begin{align*}
\EE_x U(X(t\wedge \xi_n))& \leqslant U(x) + \EE_x \int_0^{t\wedge \xi_n} -\bigg(1- \frac{1}{16 \sqrt{d}}\bigg) |\nabla U(X(s))|^2 + \sqrt{d}  \kappa'' T  \, ds.
\end{align*}
Since $U\geqslant 0$ and $\kappa' \in (0,1)$, this then implies the estimate
\begin{align*}
\EE_x  \int_0^{t\wedge \xi_n} |\nabla U(x(s))|^2 \, ds \leqslant \frac{U(x)}{1-\frac{1}{16 \sqrt{d}}} +\frac{ t \kappa'' T \sqrt{d}}{1-\frac{1}{16 \sqrt{d}}} .
\end{align*}
Using Fatou's lemma and the fact that $\xi_n\rightarrow \infty$ almost surely, we find that for all $t>0$
\begin{align*}
\frac{1}{t} \EE_x \int_0^t |\nabla U(x(s))|^2 \, ds \leqslant  \frac{U(x)}{t(1-\frac{1}{16 \sqrt{d}})} + \frac{  \kappa'' T \sqrt{d}}{1-\frac{1}{16 \sqrt{d}}}
\end{align*}
Thus, by another simple approximation argument using convergence of the C\'{e}saro means to $\mu_2$,
\begin{align*}
\int_\mathcal{O} |\nabla U|^2 \,d \mu_2 \leqslant \frac{ \kappa'' T \sqrt{d}}{1-\frac{1}{16 \sqrt{d}}}.
\end{align*}
This gives the first inequality.

For the second, observe that if $f(R)= c_\infty R^{2-\frac{2}{\eta_\infty}} - d_\infty$, then
\begin{align*}
\{ U \geqslant R\} \subseteq \{ |\nabla U|^2 \geqslant f(R)\}.
\end{align*}
Hence, employing the first inequality
\begin{align*}
\int_{\{ U \geqslant R\}} \, d\mu_2 \leqslant \int_{\{ |\nabla U|^2 \geqslant f(R)\}} \, d\mu_2\leqslant \frac{1}{f(R)} \int |\nabla U|^2 \, d\mu_2 \leqslant \frac{ \kappa'' T \sqrt{d}}{f(R)(1-\frac{1}{16 \sqrt{d}})},
\end{align*}
from which we arrive at the second claimed bound by plugging in $R=R_2$.

To obtain the final desired inequality, observe that
\begin{align*}
\mu(K^c) = \int_{K^c} \, d\mu &\leqslant \int_{\{|v|^2 \geqslant (20e^4 +2)NkT\}} \, d\mu_1 + \int_{\{ U \geqslant R_2\}} \, d\mu_2 \\
& \leqslant \int_{\{|v|^2 \geqslant (20e^4+2) NkT\}} \, d\mu_1 + \frac{1}{2(10e^4+1)}.
\end{align*}
Now, to get a bound on the remaining quantity above, this time we consider the process $V(t)$ on $(\RR^k)^N$ defined by
\begin{align*}
dV(t) = - V(t) \, dt + \sqrt{2T} \, dW(t)
\end{align*}
where $W(t)$ is a standard Brownian motion on $(\RR^k)^N$ on $(\Omega, \mathcal{F}, \PP)$.  Note that this process is exactly in the same form as in~\eqref{eqn:gradsys} by setting $U(v)= |v|^2/2$.  Hence, in exactly the same way as before, it follows that
\begin{align*}
\frac{1}{t} \int_0^t \EE_v |V(s)|^2 \, ds \leqslant \frac{|v|^2}{2t} + NkT
\end{align*}
for all $t>0$.  Consequently,
\begin{align*}
\int |v|^2 \, d\mu_1(v) \leqslant NkT.
\end{align*}
Plugging this fact into the above gives
\begin{align*}
\mu(K^c)  &\leqslant \int_{\{|v|^2 \geqslant (20e^4+2) NkT\}} \, d\mu_1 + \frac{1}{20e^4+2} \\
&\leqslant \frac{1}{(20e^4+2) NkT} \int |v|^2 \, d\mu_1(v) + \frac{1}{20e^4+2}\\
&\leqslant \frac{1}{10e^4+1}.
\end{align*}

\end{proof}

\subsection*{Quantitative bounds for singular potentials}
Next we aim to prove Proposition~\ref{prop:singularconst} which gives the claimed estimates on the singular potential in Example~\ref{ex:LJ}.  We first need the following lower bound on the gradient.
\begin{lemma}
\label{lem:singularH}
Consider the potential $U$ and the open set $\mathcal{O}$ defined in Example~\ref{ex:LJ}.  Then we have the following estimate
\begin{align}
\label{eqn:lowergrad}
| \nabla U(x)| \geqslant \frac{A a}{2 N^{3/2}} \sum_{i=1}^N |x_i|^{a -1} + \frac{Bb }{2N^{\frac{7}{2}}}\sum_{i<j} \frac{1}{|x_i-x_j|^{b+1}} -\frac{Aa}{\sqrt{N}}-B b N^{b+ \frac{5}{2}}
\end{align}
for all $x\in \mathcal{O}$.
\end{lemma}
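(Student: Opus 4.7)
The plan is to obtain the bound by projecting $\nabla U$ against carefully chosen unit directions in $(\RR^k)^N$ and using the elementary fact $|\nabla U| \geqslant \langle v, \nabla U\rangle$ for any $v$ with $|v|\leqslant 1$. The two additive sums on the right-hand side will be extracted from two different families of test vectors, and the residual negative contributions absorbed into the two constants $-Aa/\sqrt N$ and $-BbN^{b+5/2}$.

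First I would handle the confining part. Take $v^{(1)}\in(\RR^k)^N$ with $v^{(1)}_i := x_i/(\sqrt N\,|x_i|)$ (and $0$ when $x_i=0$), so that $|v^{(1)}|\leqslant 1$. A direct computation gives
\begin{equation*}
\langle v^{(1)}, \nabla U\rangle = \tfrac{Aa}{\sqrt N}\sum_i |x_i|^{a-1} - \tfrac{Bb}{\sqrt N}\sum_i\sum_{j\neq i}\tfrac{(x_i-x_j)\cdot x_i/|x_i|}{|x_i-x_j|^{b+2}},
\end{equation*}
and Cauchy--Schwarz inside the numerator of the second sum bounds the error termwise by $|x_i-x_j|^{-(b+1)}$, so one obtains $|\nabla U| \geqslant \tfrac{Aa}{\sqrt N} S_1 - \tfrac{2Bb}{\sqrt N} S_2$ with $S_1:=\sum_i|x_i|^{a-1}$ and $S_2:=\sum_{i<j}|x_i-x_j|^{-(b+1)}$. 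This already controls the regime where the confining part is dominant.

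For the singular part I would use an \emph{antisymmetric} test vector on each interacting pair. For fixed $(k,\ell)$, the vector supported only on sites $k,\ell$ with $v_k = -v_\ell = (x_\ell-x_k)/(\sqrt 2\,|x_\ell-x_k|)$ is a unit vector, and the antisymmetrization identity
\begin{equation*}
\sum_i v_i \cdot \sum_{j\neq i}\tfrac{x_i-x_j}{|x_i-x_j|^{b+2}} = \sum_{i<j}(v_i-v_j)\cdot\tfrac{x_i-x_j}{|x_i-x_j|^{b+2}}
\end{equation*}
collapses the singular interaction to the single dominant term $\sqrt 2\, Bb/|x_k-x_\ell|^{b+1}$ plus errors coming from the other pair interactions and the two confining gradients at $x_k,x_\ell$, each bounded in absolute value by $|x_i|^{a-1}$ or by $|x_i-x_j|^{-(b+1)}$. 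Combining such bounds over \emph{all} pairs via $|\nabla U| \geqslant N^{-1/2}\sum_i|\nabla_{x_i}U|$, together with the crude counting $\sum_{k<\ell}\sum_{m\neq k,\ell}|x_k-x_m|^{-(b+1)} \leqslant 2NS_2$, introduces the additional powers of $N$ in the normalization and leads to the $Bb/(2N^{7/2})$ coefficient on $S_2$; the analogous bookkeeping on the confining contribution produces the $Aa/(2N^{3/2})$ coefficient on $S_1$.

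The main obstacle is precisely the sign-indefinite cross contributions in the singular projection step: for each pair $(k,\ell)$, the projections of the other $N-2$ interactions onto the direction $(x_\ell-x_k)/|x_\ell-x_k|$ can be of either sign and do not telescope. Controlling them without losing the positive $1/|x_k-x_\ell|^{b+1}$ signal forces one to spend an entire extra power of $N$ in the normalization of the second test vector --- which is exactly where the awkward $N^{-7/2}$ factor comes from --- while any residual pieces that cannot be folded into either $S_1$ or $S_2$ (in particular, uniform bounds for particles in the ``unit ball'' or for pairs outside the singular regime) are absorbed into the two constants $-Aa/\sqrt N$ and $-BbN^{b+5/2}$, whose $N$-dependence is consistent with $O(N)$ particle contributions and $O(N^2)$ pair contributions each crudely controlled by configuration-independent terms.
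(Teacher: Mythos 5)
There is a genuine gap, and it is exactly at the point you flag as ``the main obstacle'': sign-indefinite cross terms cannot be fixed by spending powers of $N$ in the normalization, because rescaling a projection bound scales the signal and the error identically. Concretely, write $S_1=\sum_i|x_i|^{a-1}$ and $S_2=\sum_{i<j}|x_i-x_j|^{-(b+1)}$. Your first bound, $|\nabla U|\geqslant \tfrac{Aa}{\sqrt N}S_1-\tfrac{2Bb}{\sqrt N}S_2$, is correct but carries an error \emph{proportional to $S_2$} with coefficient $\tfrac{2Bb}{\sqrt N}$, i.e.\ about $N^3$ times larger than the positive coefficient $\tfrac{Bb}{2N^{7/2}}$ you must end up with; it can only be absorbed if your second family produces $S_2$ with a comparably large positive coefficient, which it cannot after any admissible weighting. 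Worse, the second family itself is doomed for $N\geqslant 3$: in the pair-$\{k,\ell\}$ projection the signal $\sqrt2\,Bb\,|x_k-x_\ell|^{-(b+1)}$ competes against cross terms $\tfrac{Bb}{\sqrt2}|x_k-x_m|^{-(b+1)}$, $\tfrac{Bb}{\sqrt2}|x_\ell-x_m|^{-(b+1)}$ of the \emph{same type} and arbitrary size (take $x_m$ much closer to $x_k$ than $x_\ell$ is). If you form any convex combination with weights $w_{k\ell}$ on the pair projections, the coefficient of $|x_k-x_\ell|^{-(b+1)}$ is at most $\sqrt2\,w_{k\ell}-\tfrac{1}{\sqrt2}\bigl(\sum_{m\neq \ell}w_{km}+\sum_{m\neq k}w_{\ell m}\bigr)$; requiring this to be positive for every pair and summing over pairs forces $2\sum w_{k\ell}>2(N-2)\sum w_{k\ell}$, which is impossible once $N\geqslant 3$. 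So no bookkeeping of $N$-powers rescues the scheme; a structural idea is missing.

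The paper supplies two such ideas, one for each projection. For the confining part it does not use the radial direction $x_i/|x_i|$ separately at each site; it builds the chain cluster $S_N(x)$ of particles linked to $x_1$ by gaps smaller than $1/N$ and uses the \emph{same} unit vector $x_1/|x_1|$ on every cluster site (zero elsewhere). Then all intra-cluster interaction terms cancel exactly by antisymmetry, every cluster-to-exterior gap is at least $1/N$ so those terms are bounded by $N^{b+1}$ each, and the confining contributions of the other cluster sites are nonnegative (using $|x_1|>1$ and intra-cluster distances $<1$); the error is therefore the \emph{constant} $-\tfrac{Aa}{\sqrt N}-BbN^{b+5/2}$, not a multiple of $S_2$. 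For the singular part it replaces your pair-supported vector by $\xi_m=c_m(x)\,\sigma(x)$ with $\sigma=(x_2-x_1)/|x_2-x_1|$ and $c_m\in\{\pm1\}$ according to which side of the hyperplane $\{y: y\cdot\sigma=x_2\cdot\sigma\}$ the particle $x_m$ lies on; with this choice \emph{every} pair term $(c_i-c_n)(x_n-x_i)\cdot\sigma/|x_n-x_i|^{b+2}$ is nonnegative, so all cross interactions are simply dropped, the pair $(1,2)$ yields $2Bb|x_1-x_2|^{-(b+1)}$, and the only error is $\tfrac{Aa}{\sqrt N}S_1$. Because the pair bound's error is $S_1$-proportional (and the cluster bound's error is constant), a convex combination with weight of order $N^{-3}$ per pair absorbs it into the positive $S_1$ term and yields exactly the stated coefficients. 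Your proposal would need to be rebuilt around these (or equivalent) sign-cancellation devices; as written, both of its projection families produce errors that provably overwhelm the signal they are meant to extract.
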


The argument is a reworking of the proof of Lemma~4.12 of~\cite{ConradGrothaus2010}.  This proof is also in~\cite{HerzogMattingly2017}, but here we give explicit constants.
\begin{proof}
The idea behind the proof is to use the basic fact that, for $x\in \mathcal{O}$, $|\nabla U (x)| \geqslant  |\nabla U(x) \cdot y|$ for all $y\in (\RR^k)^N$ with $|y|=1$.  Then we aim to pick a convenient direction $y\in (\RR^k)^N$ with $|y|=1$.  Notationally, we set $\mathcal{Z}_N= \{ 1, 2, \ldots, N\}$.

We first claim that
\begin{align}
\label{eqn:bound1}
|\nabla U(x)| \geqslant  \frac{Aa}{\sqrt{N}} |x_i|^{a-1}- \frac{Aa}{\sqrt{N}}- Bb N^{b+\frac{5}{2}}
\end{align}
for all $i=1,2, \ldots, N$ and $x\in \mathcal{O}$.  From this, summing both sides of the previous inequality from $1$ to $N$ it follows that
\begin{align}
\label{eqn:bound2}
|\nabla U (x)| \geqslant \frac{Aa}{N^{3/2}}\sum_{i=1}^N |x_i|^{a-1} - \frac{Aa}{\sqrt{N}}- Bb N^{b+ \frac{5}{2}}
\end{align}
on $\mathcal{O}$. Note that without loss of generality it suffices to show the bound~\eqref{eqn:bound1} above for $i=1$ and for $|x_1| > 1$.  For $x=(x_1, x_2, \ldots, x_N)\in \mathcal{O}$, consider an increasing sequence of sets $S_i(x)$, $i=1,2, \ldots, N$, defined inductively as follows:
\begin{align*}
S_1(x) &= \{ j \in \mathcal{Z}_N\, : \, |x_1- x_j| < N^{-1}\} \\
S_m(x) &= \{ j \in \mathcal{Z}_N \, : \, |x_j - x_k|< N^{-1} \,\, \exists \,\,k \in S_{m-1}(x)\}, \,\,\,\, m =2, \ldots, N.
\end{align*}
Observe that $S_1(x)\neq \emptyset$ since $1\in S_1(x)$.  Also note that for any $i,j \in S_N(x)$, $|x_i - x_j| < 1$.  Consequently, combining $|x_1-x_j|^2=|x_1|^2+ |x_j|^2 - 2 x_1\cdot x_j$ with the inequality $|x_1-x_j|<1$, it follows that for $j\in S_N(x)$, $2 x_1 \cdot x_j \geqslant |x_1|^2 + |x_j|^2 - 1 \geqslant 0$
where the last inequality follows since $|x_1|>1$.  Moreover, if $i\in S_N(x)$ while $j\notin S_N(x)$ we have that $|x_i - x_j | \geqslant N^{-1}$.  Let $\sigma(x)=(\sigma_1(x), \ldots, \sigma_N (x)) \in (\RR^k)^N$ be such that $\sigma_i(x)= x_1/|x_1|$ if $i\in S_N(x)$ and $\sigma_i=0$ otherwise.  We thus have the bound
\begin{align}
\nonumber &\sqrt{N} |\nabla U(x)| \\
\nonumber &\geqslant \sigma(x) \cdot \nabla U(x)\\
\nonumber &= A a \sum_{n \in S_N(x)}  \frac{x_1\cdot x_n}{|x_1|}|x_n|^{a-2}  + Bb \sum_{n\in S_N(x)} \sum_{\substack{i=1\\i\neq n}}^N  \frac{x_1 |x_1|^{-1}\cdot (x_n - x_i)}{|x_n-x_i|^{b+2}}\\
\label{eqn:lastbound}&\geqslant Aa  |x_1|^{a-1} +Bb \sum_{n\in S_N(x)} \sum_{\substack{i=1\\i\neq n\\i \in S_N(x)}}^N  \frac{x_1 |x_1|^{-1}\cdot (x_n - x_i)}{|x_n-x_i|^{b+2}}
\\
\nonumber & \qquad +  Bb\sum_{n\in S_N(x)} \sum_{\substack{i=1\\i\neq n\\i \notin S_N(x)}}^N  \frac{x_1 |x_1|^{-1}\cdot (x_n - x_i)}{|x_n-x_i|^{b+2}}.
\end{align}
Next note that
\begin{align*}
\sum_{n\in S_N(x)} \sum_{\substack{i=1\\i\neq n\\i \in S_N(x)}}^N  \frac{x_1 |x_1|^{-1}\cdot (x_n - x_i)}{|x_n-x_i|^{b+2}} &= \sum_{\substack{n,i \in S_N(x)\\n\neq i}} \frac{x_1 |x_1|^{-1}\cdot (x_n - x_i)}{|x_n-x_i|^{b+2}} \\
&= \sum_{\substack{n,i \in S_N(x)\\n\neq i}} \frac{x_1 |x_1|^{-1}\cdot (x_i - x_n)}{|x_n-x_i|^{b+2}}.\end{align*}
Consequently,
\begin{align*}
\sum_{n\in S_N(x)} \sum_{\substack{i=1\\i\neq n\\i \in S_N(x)}}^N  \frac{x_1 |x_1|^{-1}\cdot (x_n - x_i)}{|x_n-x_i|^{b+2}} =0.
\end{align*}
Plugging this back into~\eqref{eqn:lastbound}, we obtain the following inequality
\begin{align*}
\sqrt{N} |\nabla U(x)|&\geqslant Aa  |x_1|^{a-1} + 0 - Bb N^{b + 3}.
\end{align*}
This finishes the proof of the bound~\eqref{eqn:bound1} when $i=1$, as desired.

We next show that for all $i,m \in \mathcal{Z}_N$ with $i\neq m$
\begin{align}
\label{eqn:bound3}
|\nabla U(x)| \geqslant \frac{2B b}{\sqrt{N}}|x_i-x_m|^{-b-1} - \frac{A a}{\sqrt{N}}\sum_{n=1}^N |x_n|^{a-1}
\end{align}
for all $x =(x_1, x_2, \ldots, x_N) \in \mathcal{O}$.  Note then that this estimate together with the bound~\eqref{eqn:bound2} implies the lemma.  Without loss of generality, we will prove the bound~\eqref{eqn:bound3} for $i=1, m=2$.  For $x\in \mathcal{O}$, let $\sigma(x)= (x_2-x_1)/|x_2-x_1|$ and $\xi_k(x)= c_k(x) \sigma(x)$ where the constants $c_k(x) \in \{-1,1\}$ are chosen to satisfy $c_k(x) = 1$ if $x_k\cdot \sigma(x) < x_2 \cdot \sigma(x)$ and $c_k(x)=-1$ otherwise.  With this choice of direction $\xi(x):=(\xi_1(x), \ldots, \xi_N(x))$ we find that on $\mathcal{O}$:\begin{align}
\label{eqn:bound4}
\nonumber \sqrt{N} |\nabla U(x)|& \geqslant \xi(x) \cdot \nabla U(x)\\
\nonumber & = \sum_{n =1}^N A a  \xi_n(x) \cdot x_n |x_n|^{a-2}  + B b\sum_{i <n}   (\xi_i(x)-\xi_n(x)) \cdot \frac{x_n-x_i}{|x_n-x_i|^{b+2}}\\
\nonumber &\geqslant -  Aa \sum_{n=1}^N |x_n|^{a-1}+ Bb  \sum_{i <n}   (\xi_i(x)-\xi_n(x)) \cdot \frac{x_n-x_i}{|x_n-x_i|^{b+2}}\\
&= -  Aa \sum_{n=1}^N |x_n|^{a-1} +Bb \sum_{i <n}   (c_i(x)-c_n(x)) \cdot \frac{x_n \cdot \sigma(x)-x_i\cdot \sigma(x)}{|x_n-x_i|^{b+2}}.
\end{align}
To bound the remaining term on the righthand side above, note that if $i, n$ are either such that $x_i \cdot \sigma(x) < x_2(x) \cdot \sigma(x)$ and $x_n \cdot \sigma(x) < x_2\cdot \sigma(x)$ or such that $x_i \cdot \sigma(x) \geqslant x_2 \cdot \sigma(x)$ and $x_n \cdot \sigma(x) \geqslant x_2 \cdot \sigma(x)$, then $c_i(x)=c_n(x)$.  Hence the corresponding term in the sum in~\eqref{eqn:bound4} is zero.
On the other hand, if $i,n$ are either such that $x_i\cdot \sigma < x_2 \cdot \sigma \leqslant x_n \cdot \sigma$ or such that $x_n \cdot \sigma < x_2\cdot \sigma \leqslant x_i \cdot \sigma$, then the corresponding term in the sum~\eqref{eqn:bound4} is nonnegative via the $c_i(x)$, $c_n(x)$.  In particular, by these observations we arrive at the estimate
\begin{align}
\label{eqn:bound4prime}
\nonumber \sqrt{N} |\nabla U(x)|& \geqslant -  Aa \sum_{n=1}^N |x_n|^{a-1} + Bb (c_1(x)-c_2(x)) \frac{(x_2 -x_1) \cdot \sigma}{|x_2-x_1|^{b +2}}\\
&=  -  A a \sum_{n=1}^N |x_n|^{a-1} + 2 Bb \frac{1}{|x_2-x_1|^{b +1}}\end{align}
as $c_1(x)-c_2(x) =2$ and $(x_2-x_1)\cdot \sigma = |x_2-x_1|$ by construction.
\end{proof}

We can now use the previous result to conclude Proposition~\ref{prop:singularconst}.
   \begin{proof}[Proof of~\ref{prop:singularconst}]
We begin by computing $\nabla U$ and $\nabla^2U$ on $\mathcal{O}$.  Observe that for $x\in\mathcal{O}$ and $i,n=1,2, \ldots, N$, $\ell,m =1,2,\ldots, k$,
\begin{align*}
\partial_{x_i^\ell} U(x)&= A a x_i^\ell |x_i|^{a-2} - B b \sum_{\substack{j=1\\j\neq i}}^N \frac{x_i^\ell -x_j^\ell}{|x_i -x_j|^{b+2}}
\end{align*}
and
\begin{align*}
\partial_{x_{n}^m  x_i^\ell}^2 U(x)&= Aa \delta_{(i, \ell), (n, m)} |x_i|^{a-2} + A a(a-2) \delta_{i, n}  |x_i|^{a-4} x_i^\ell x_{n}^m\\
& \hspace{-0.65in}+ B b\sum_{\substack{j=1\\j\neq i}}^N \bigg\{\frac{\delta_{(j, \ell), (n, m)} - \delta_{(i, \ell), (n, m)} }{|x_i -x_{j}|^{b+2}} + (b+2) \frac{(x_i^\ell-x_{j}^\ell)(\delta_{i, n}(x_i^m-x_{j}^m) + \delta_{n, j} (x_{j}^m-x_i^m))}{|x_i -x_{j}|^{b+4}}\bigg\}
\end{align*}
where $\delta_{i,j}=1$ if $i=j$ and $0$ otherwise.  Also, the $(a-2)\times \cdots$ term is defined to be $0$ when $a=2$.  Thus for any $y\in (\RR^k)^N$ with $|y| \leqslant 1$ and any $x\in \mathcal{O}$ we arrive at the estimate
\begin{align}
\label{eqn:HessSingH}
|\nabla^2 U(x) y|&\leqslant Aa(a-1)k \sum_{i=1}^N |x_i|^{a-2} + 4Bb (b+3) k \sum_{i<n} \frac{1}{|x_i -x_{n}|^{b+2}}.
\end{align}
Applying Lemma~\ref{lem:singularH} with Young's inequality gives that for all $x\in \mathcal{O}$
\begin{align}
\label{eqn:GradSingH}
|\nabla U(x)|^2 \geqslant \frac{A^2a^2}{8 N^{3}} \sum_{i=1}^N |x_i|^{2a -2} + \frac{B^2 b^2 }{8 N^7}\sum_{i<j} |x_i-x_j|^{-2b-2} -\frac{2A^2a^2}{N}-2 B^2b^2 N^{2b+5}.
\end{align}
In order to compare~\eqref{eqn:GradSingH} with~\eqref{eqn:HessSingH}, next note that for any constants $C_i>0$ we have
\begin{align*}
\sum_{i=1}^N |x_i|^{a-2} &= \sum_{\{i\, :\,  |x_i|^{a} \geqslant C_1\}} |x_i|^{a-2} + \sum_{\{i: |x_i|^{a} \leqslant C_1\}} |x_i|^{a-2} \leqslant C_1^{\frac{a-2}{a}} N+ \frac{1}{C_1}\sum_{i=1}^N |x_i|^{2a-2}
\end{align*}
and, similarly,
\begin{align*}
\sum_{i<j} \frac{1}{|x_i -x_j |^{b+2}}&=\sum_{\{i<j\, :\,  |x_i-x_j|^{b} \leqslant 1/C_2\}} \frac{1}{|x_i -x_j |^{b+2}} + \sum_{\{i<j\, :\,  |x_i-x_j|^{b} \geqslant 1/C_2\}} \frac{1}{|x_i -x_j |^{b+2}} \\
& \leqslant C_2^{\frac{b+2}{b}}N^2 +\frac{1}{C_2} \sum_{i<n} \frac{1}{|x_i -x_n|^{2b+2}}.
\end{align*}
Thus picking $C_1=128(a-1)N^4k^2 T/(Aa)$, $C_2=512 N^8k^2 (b+3)T/(Bb)$ and combining~\eqref{eqn:GradSingH} with~\eqref{eqn:HessSingH} produces the following bound on $\mathcal{O}$
\begin{align}
|\nabla^2 U (x) \cdot y| \leqslant \frac{1}{16Td}|\nabla U(x)|^2 + \kappa''
\end{align}
for any $y \in (\RR^k)^N$ with $|y| \leqslant 1$ where $\kappa''$ is as in the statement of the result.  Note that this validates the first part of Assumption~\ref{assump:2}.

In order to check the second part of Assumption~\ref{assump:2}, first note that for any $\eta_0> -1, \eta_0 \neq 0,$ we have the following estimate
\begin{align}
\label{eqn:singUest1}
U(x)^{2+\frac{2}{\eta_0}} \geqslant \frac{1}{2N} \sum_{i=1}^N A^{2+ \frac{2}{\eta_0}} |x_i|^{2a+\frac{2a}{\eta_0}} + \frac{1}{2N^2} \sum_{i<j} \frac{B^{2+\frac{2}{\eta_0}}}{|x_i -x_j|^{2b+\frac{2b}{\eta_0}}}
\end{align}
for any $x\in \mathcal{O}$.  Moreover, for any $\eta_\infty>1$ and any $x\in \mathcal{O}$
\begin{align}
\label{eqn:singUest2}
U(x)^{2-\frac{2}{\eta_\infty}} \leqslant (2N)^{2-\frac{2}{\eta_\infty}} \sum_{i=1}^N A^{2-\frac{2}{\eta_\infty}} |x_i|^{2a- \frac{2a}{\eta_\infty}} + (2N^2)^{2- \frac{2}{\eta_\infty}} \sum_{i<j} \frac{B^{2-\frac{2}{\eta_\infty}}}{|x_i -x_j|^{2b-\frac{2b}{\eta_\infty}}}
\end{align}
Also note that on $\mathcal{O}$
\begin{align}
\label{eqn:singGradest2}
|\nabla U(x)|^2 &\leqslant 2 A^2 a^2 \sum_{i=1}^N |x_i|^{2a-2} + 2 B^2 b^2 N \sum_{i<j} \frac{1}{|x_i-x_j|^{2b+2}}
\end{align}
Combining the estimate~\eqref{eqn:singUest1} with~\eqref{eqn:singGradest2} and picking $\eta_0=b$ produces the inequality
\begin{align}
|\nabla U(x)|^2 \leqslant c_0 U(x)^{2+\frac{2}{b}} + d_0
\end{align}
where $c_0$ and $d_0$ are as in the statement of the result.  Similarly, combining~\eqref{eqn:singUest2} with~\eqref{eqn:GradSingH} and choosing $\eta_\infty=a$ gives
\begin{align}
|\nabla U(x)|^2 \geqslant c_\infty U(x)^{2-\frac{2}{a}} - d_\infty
\end{align}
where $c_\infty$ and $d_\infty$ are as in the statement of the result.

\end{proof}

\providecommand{\bysame}{\leavevmode\hbox to3em{\hrulefill}\thinspace}
\providecommand{\MR}{\relax\ifhmode\unskip\space\fi MR }
\providecommand{\MRhref}[2]{%
  \href{http://www.ams.org/mathscinet-getitem?mr=#1}{#2}
}
\providecommand{\href}[2]{#2}


\begin{thebibliography}{10}

\bibitem{AthreyaKolbaMattingly2012}
A. Athreya, T. Kolba, and J.~C. Mattingly, \emph{Propagating
  {L}yapunov functions to prove noise-induced stabilization}, Electron. J.
  Probab. \textbf{17} (2012), no. 96, 38. \MR{2994844}

\bibitem{BakryCattiauxGuillin2008}
D. Bakry, P. Cattiaux, and A. Guillin, \emph{Rate of
  convergence for ergodic continuous {M}arkov processes: {L}yapunov versus
  {P}oincar\'{e}}, J. Funct. Anal. \textbf{254} (2008), no.~3, 727--759.
  \MR{2381160}

\bibitem{Baudoin2017a}
F. Baudoin, \emph{Bakry-{\'{e}}mery meet {V}illani}, J. Funct. Anal.
  \textbf{273} (2017), no.~7, 2275--2291. \MR{3677826}

\bibitem{CGMZ_2017}
P. Cattiaux, A. Guillin, P. Monmarch{\'e}, and C. Zhang,
  \emph{{Entropic multipliers method for {L}angevin diffusion and weighted log
  sobolev inequalities}}, working paper or preprint, August 2017.

\bibitem{ConradGrothaus2010}
F. Conrad and M. Grothaus, \emph{Construction, ergodicity and rate of
  convergence of {$N$}-particle {L}angevin dynamics with singular potentials},
  J. Evol. Equ. \textbf{10} (2010), no.~3, 623--662. \MR{2674062}

\bibitem{CookeHerzogMattinglyMcKinleySchmidler2017}
B. Cooke, D.~P. Herzog, J.~C. Mattingly, S.~A. McKinley, and
  S.~C. Schmidler, \emph{Geometric ergodicity of two-dimensional
  {H}amiltonian systems with a {L}ennard-{J}ones-like repulsive potential},
  Commun. Math. Sci. \textbf{15} (2017), no.~7, 1987--2025. \MR{3717917}

\bibitem{DolbeaultMouhotSchmeiser2015}
J. Dolbeault, C. Mouhot, and C. Schmeiser,
  \emph{Hypocoercivity for linear kinetic equations conserving mass}, Trans.
  Amer. Math. Soc. \textbf{367} (2015), no.~6, 3807--3828. \MR{3324910}

\bibitem{EGZ_17}
A. Eberle, A. Guillin, and R. Zimmer, \emph{Couplings and
  quantitative contraction rates for {L}angevin dynamics}, Ann. Probab. \textbf{47} (2019), no.~4, 1982-2010 \MR{3980913}

\bibitem{EberleGuillinZimmer2019}
\bysame, \emph{Quantitative {H}arris-type theorems for diffusions and
  {M}c{K}ean-{V}lasov processes}, Trans. Amer. Math. Soc. \textbf{371} (2019),
  no.~10, 7135--7173. \MR{3939573}

\bibitem{GrothausStilgenbauer2015}
M. Grothaus and P. Stilgenbauer, \emph{A hypocoercivity related
  ergodicity method for singularly distorted non-symmetric diffusions},
  Integral Equations Operator Theory \textbf{83} (2015), no.~3, 331--379.
  \MR{3413926}

\bibitem{HairerMattingly2008}
M. Hairer and J.~C. Mattingly, \emph{Spectral gaps in {W}asserstein
  distances and the 2{D} stochastic {N}avier-{S}tokes equations}, Ann. Probab.
  \textbf{36} (2008), no.~6, 2050--2091. \MR{2478676}

\bibitem{HairerMattingly2009}
\bysame, \emph{Slow energy dissipation in anharmonic oscillator chains}, Comm.
  Pure Appl. Math. \textbf{62} (2009), no.~8, 999--1032. \MR{2531551}

\bibitem{HairerMattingly2011b}
\bysame, \emph{Yet another look at {H}arris' ergodic theorem for {M}arkov
  chains}, Seminar on {S}tochastic {A}nalysis, {R}andom {F}ields and
  {A}pplications {VI}, Progr. Probab., vol.~63, Birkh\"{a}user/Springer Basel
  AG, Basel, 2011, pp.~109--117. \MR{2857021}

\bibitem{HerauNier2004}
F. H\'{e}rau and F. Nier, \emph{Isotropic hypoellipticity
  and trend to equilibrium for the {F}okker-{P}lanck equation with a
  high-degree potential}, Arch. Ration. Mech. Anal. \textbf{171} (2004), no.~2,
  151--218. \MR{2034753}

\bibitem{HerzogMattingly2015a}
D.~P. Herzog and J.~C. Mattingly, \emph{{Noise-induced stabilization
  of planar flows I}}, Electron. J. Probab. \textbf{20} (2015), no.~111, 1--43.

\bibitem{HerzogMattingly2017}
D.~P. Herzog and J.~C. Mattingly, \emph{Ergodicity and {L}yapunov
  functions for {L}angevin dynamics with singular potentials}, arXiv preprint
  arXiv:1711.02250, 2017.

\bibitem{Hormander1967a}
L. H{\"o}rmander, \emph{Hypoelliptic second order differential equations},
  Acta Math. \textbf{119} (1967), 147--171. \MR{0222474 (36 \#5526)}

\bibitem{IacobucciOllaStoltz2019}
A.~Iacobucci, S.~Olla, and G.~Stoltz, \emph{Convergence rates for
  nonequilibrium {L}angevin dynamics}, Ann. Math. Qu\'{e}. \textbf{43} (2019),
  no.~1, 73--98. \MR{3925138}

\bibitem{KhasminskiiBook2012}
R. Khasminskii, \emph{Stochastic stability of differential equations},
  second ed., Stochastic Modelling and Applied Probability, vol.~66, Springer,
  Heidelberg, 2012, With contributions by G. N. Milstein and M. B. Nevelson.
  \MR{2894052}
  
  
\bibitem{Kat_94}
G. Katriel, \emph{Mountain pass theorems and global homeomorphism theorems}, Ann. Inst. H. Poincar\'{e} Anal. Non Lin\'{e}aire, \textbf{11} (1994), no.~2, 189--209.  \MR{1267366}


\bibitem{lu_19}
Y. Lu and J.~C. Mattingly, \emph{Geometric ergodicity of {L}angevin
  dynamics with {C}oulomb interactions}, arXiv preprint arXiv:1902.00602, 2019.

\bibitem{MattinglyStuartHigham2002}
J.~C. Mattingly, A.~M. Stuart, and D.~J. Higham, \emph{Ergodicity for {SDE}s
  and approximations: locally {L}ipschitz vector fields and degenerate noise},
  Stochastic Process. Appl. \textbf{101} (2002), no.~2, 185--232. \MR{1931266}

\bibitem{MeynTweedie1993a}
S.~P. Meyn and R.~L. Tweedie, \emph{Stability of {M}arkovian processes.
  {III}. {F}oster-{L}yapunov criteria for continuous-time processes}, Adv. in
  Appl. Probab. \textbf{25} (1993), no.~3, 518--548. \MR{1234295}

\bibitem{Monmarche2015}
P. Monmarch\'{e}, \emph{On {$\mathcal{H}^1$} and entropic convergence for
  contractive {PDMP}}, Electron. J. Probab. \textbf{20} (2015), Paper No. 128,
  30. \MR{3438742}

\bibitem{Monmarche2019}
\bysame, \emph{Generalized {$\Gamma$} calculus and application to interacting
  particles on a graph}, Potential Anal. \textbf{50} (2019), no.~3, 439--466.
  \MR{3925589}

\bibitem{Rey-Bellet2006}
L. Rey-Bellet, \emph{Ergodic properties of {M}arkov processes}, Open quantum
  systems. {II}, Lecture Notes in Math., vol. 1881, Springer, Berlin, 2006,
  pp.~1--39. \MR{2248986 (2008g:60224)}

\bibitem{Talay2002}
D.~Talay, \emph{Stochastic {H}amiltonian systems: exponential convergence to
  the invariant measure, and discretization by the implicit {E}uler scheme},
  Markov Process. Related Fields \textbf{8} (2002), no.~2, 163--198,
  Inhomogeneous random systems (Cergy-Pontoise, 2001). \MR{1924934}

\bibitem{Tropper1977a}
M.~M. Tropper, \emph{Ergodic and quasideterministic properties of
  finite-dimensional stochastic systems}, J. Statist. Phys. \textbf{17} (1977),
  no.~6, 491--509. \MR{0478412}

\bibitem{Villani2009}
C. Villani, \emph{Hypocoercivity}, Mem. Amer. Math. Soc. \textbf{202}
  (2009), no.~950, iv+141. \MR{2562709}

\bibitem{ZimmerPhDThesis}
R. Zimmer, \emph{{C}ouplings and {K}antorovich contractions with explicit
  rates for diffusions}, Ph.D. thesis, Universit{\"a}ts-und Landesbibliothek
  Bonn, 2017.

\end{thebibliography}
\end{document}